\documentclass[a4paper,10pt]{article}

\usepackage[utf8]{inputenc}
\usepackage[T1]{fontenc}  
\usepackage[english]{babel}  
\usepackage{graphicx}       
\usepackage{subcaption}   
\usepackage{geometry}       
\usepackage{caption}       
\usepackage{amsfonts}       
\usepackage{float} 
\usepackage{xcolor}
\usepackage{xcolor}         
\usepackage{fancyhdr}       
\usepackage{graphicx}
\usepackage{subcaption}
\usepackage{amsmath,amssymb,amsfonts,amsthm,physics,bm,dsfont}
\usepackage{authblk}
\usepackage{tabularx}
\usepackage{dblfloatfix}
\usepackage{todonotes}
\usepackage{bigints}

\usepackage{tikz}
\usepackage[colorlinks=true,plainpages=true,urlcolor=blue,anchorcolor=red, citecolor=blue, linkcolor=blue]{hyperref}

\usepackage{url}            
\usepackage{booktabs}
\usepackage{nicefrac}       
\usepackage{microtype}      
\usepackage{lipsum}
\usepackage[english]{babel}
\usepackage{ragged2e}
\usepackage{epsfig}
\usepackage{moreverb}
\usepackage{fancybox}
\usepackage[sectionbib]{chapterbib}
\usepackage{cellspace}
\usepackage{subcaption}
\usepackage{caption}
\usepackage{siunitx}
\usepackage{adjustbox} 
\usepackage{soul}

\usepackage{booktabs}
\usepackage{multirow}
\usepackage{threeparttable}
\usepackage{enumitem}

\newtheorem{theorem}{Theorem}
\newtheorem{definition}[theorem]{Definition}

\newtheorem{condition}[theorem]{Condition}
\newtheorem{proposition}[theorem]{Proposition}
\newtheorem{lemma}[theorem]{Lemma}

\newtheorem{remark}[theorem]{Remark}

\newcommand{\id}{\mathds{1}}

\newcommand{\dx}{0pt}  
\newcommand{\dy}{3pt}  

\newcommand{\quart}[3]{%
	\ifthenelse{\equal{#2}{tl}}{
		\adjincludegraphics[width=#3\textwidth,clip,
		trim={0 {\dimexpr.5\height+\dy\relax} {\dimexpr.5\width-\dx\relax} 0}]{#1}%
	}{}%
	\ifthenelse{\equal{#2}{tr}}{
		\adjincludegraphics[width=#3\textwidth,clip,
		trim={{\dimexpr.5\width+\dx\relax} {\dimexpr.5\height+\dy\relax} 0 0}]{#1}%
	}{}%
	\ifthenelse{\equal{#2}{bl}}{
		\adjincludegraphics[width=#3\textwidth,clip,
		trim={0 0 {\dimexpr.5\width-\dx\relax} {\dimexpr.5\height-\dy\relax}}]{#1}%
	}{}%
	\ifthenelse{\equal{#2}{br}}{
		\adjincludegraphics[width=#3\textwidth,clip,
		trim={{\dimexpr.5\width+\dx\relax} 0 0 {\dimexpr.5\height-\dy\relax}}]{#1}%
	}{}%
}

\newcommand{\half}[3]{%
	\ifthenelse{\equal{#2}{t}}{
		\adjincludegraphics[width=#3\textwidth,clip,
		trim={0 {\dimexpr.5\height+\dy\relax} 0 0}]{#1}%
	}{}%
	\ifthenelse{\equal{#2}{r}}{
		\adjincludegraphics[width=#3\textwidth,clip,
		trim={{\dimexpr.5\width+\dx\relax} 0 0 0}]{#1}%
	}{}%
	\ifthenelse{\equal{#2}{b}}{
		\adjincludegraphics[width=#3\textwidth,clip,
		trim={0 0 {\dimexpr.5\height-\dy\relax} 0}]{#1}%
	}{}%
	\ifthenelse{\equal{#2}{l}}{
		\adjincludegraphics[width=#3\textwidth,clip,
		trim={0 0 {\dimexpr.5\width-\dx\relax}  0}]{#1}%
	}{}%
}

\newcommand{\gapx}{2pt} 
\newcommand{\gapy}{2pt} 

\renewcommand{\dx}{0pt}
\renewcommand{\dy}{0pt}

\renewcommand{\arraystretch}{0}  
\title{Arbitrary  order stationarity preserving   stabilized finite elements for multidimensional nonlinear hyperbolic problems. \\
Application to the  Euler equations with gravity}
\author[$\circ,\star$]{M. Ziggaf}
\author[+]{D. Torlo}
\author[$\star$]{M. Ricchiuto}
\affil[$\circ$]{LaR2A Laboratory, Dep. of Mathematics, Faculty of Sciences, Abdelmalek Essaadi Univ., B.P. 2121, Tetouan, Morocco}
\affil[$\star$]{Inria, Univ. Bordeaux, CNRS, Bordeaux INP, IMB, UMR 5251, F-33400 Talence, France}
\affil[+]{Dip. di Matematica G. Castelnuovo, Universit\`a di Roma La Sapienza, Roma, Italy}

\date{ }

\begin{document}

\maketitle

\begin{abstract}
We develop arbitrarily high-order, stationarity-preserving stabilized finite element methods for multidimensional nonlinear hyperbolic balance laws on Cartesian grids. 
We aim at approximating all the steady states of the problem at hand, including non-trivial genuinely multidimensional equilibria, with a level of accuracy higher than the nominal one of the underlying scheme.
We formalize  more precisely the meaning  of  stationarity preservation, providing some technical conditions for its realizability.
We then recast  the multidimensional global-flux quadrature of Barsukow et al. (Num. Meth. PDEs, 2025) as a local preprocessing of the physical fluxes that maps continuous polynomial vector fields 
to a local space with Raviart--Thomas-type structure.
Both the Galerkin and SUPG formulations are recast in this setting. The resulting methods  extend the stationarity-preserving finite-volume approach of Barsukow et al. (J. Comput. Phys., 2026) to high-order continuous finite elements and Barsukow et al. (Num. Meth. PDEs, 2025) to nonlinear balance laws. We analyze key properties of the proposed schemes, including local conservation and nodal superconvergence of the discrete steady kernel, and we discuss their relation to low-Mach-compliant discretizations.

We apply the framework to the compressible Euler equations with gravity. A simple source-term reformulation yields machine-precision preservation of isothermal hydrostatic equilibria. Extensive numerical benchmarks, including moving equilibrium, near-equilibrium, and instability-dominated regimes, demonstrate clear improvements in robustness and accuracy over standard SUPG and reference finite-volume methods.
\end{abstract}

\section{Generalities}

This paper focuses on the construction of  enhanced discretizations for multi-dimensional hyperbolic balance laws. 
We focus here on  the Euler equations for a perfect gas, including gravitational effects, although many ideas can be used for other models.
As a prototype example, we will thus consider the system on a spatial domain $\Omega \subset \mathbb{R}^d$ for $d\geq 1$ and for $t>0$:
\begin{equation}\label{Euler_Eq0}
\begin{cases}
	\partial_t \rho + \nabla \cdot (\rho \mathbf{v}) = 0, \\
	\partial_t (\rho \mathbf{v}) + \nabla \cdot (\rho \mathbf{v} \otimes \mathbf{v}) + \grad{p} = -\rho \grad{\phi}, \\
	\partial_t (\rho E) + \nabla \cdot (\rho H \mathbf{v}) = -\rho \mathbf{v} \cdot \grad{\phi},
\end{cases}
\end{equation}
where $\rho : \Omega \times \mathbb R^+ \to \mathbb R^+ $ denotes the fluid density, $\mathbf{v} = (  v_1,\dots,v_d): \Omega \times \mathbb R^+ \to \mathbb R^d$ the velocity vector, $E: \Omega \times \mathbb R^+ \to \mathbb R^+ $ and $H: \Omega \times \mathbb R^+ \to \mathbb R^+ $ are the specific total energy and enthalpy respectively,
defined as
\begin{equation}\label{Euler_Eq1}
E = e + \dfrac{1}{2}|\mathbf{v}|^2\;,\quad H = E + \dfrac{p}{\rho}\,.
\end{equation}
The above relations are closed by the equation of state. In this work, we will consider perfect gases for which
the pressure $p$, density $\rho$, and  internal energy $e$ are related by
\begin{equation}\label{Euler_Eq2}
 p = (\gamma - 1)\rho e 
 \end{equation}
with $\gamma$ the ratio of specific heats (e.g., $\gamma = 1.4$ for air). We have denoted by $\phi : \Omega \to \mathbb R$, the gravitational potential,
which in this work is assumed to be a given function of  $\mathbf{x} =( x^1,\dots,x^d)\in\mathbb{R}^d$, the spatial coordinate vector. This paper focuses on the two-dimensional case $d=2$. The system can be  written in compact form as 
\begin{equation}\label{Euler_Eq3}
	\partial_t \bm{W} + \nabla\cdot  \mathbf{F}  = \mathbf{S}(\bm{W};\mathbf{x}),
\end{equation}
with $\bm{W}\in \mathbb R^{s}$ the conserved variables, with $s=d+2$ denoting the size of the system, and  $\mathbf{F}\in \mathbb{R}^{s}\times\mathbb{R}^{d}$ the flux tensor and $\mathbf{S}\in \mathbb{R}^{s}$ the source term.  For  $d=2$  we have
$\mathbf{F}=(\mathbf{F}_1,\mathbf{F}_2)$ with
\begin{equation}\label{Euler_Eq4}
	\bm{W} =
	\begin{pmatrix}
		\rho \\[5pt]
		\rho v_1 \\[5pt]
		\rho v_2 \\[5pt]
		\rho E
	\end{pmatrix}, \qquad
	\mathbf{F}_1 (\bm{W}) =
	\begin{pmatrix}
		\rho v_1 \\[5pt]
		\rho v_1^2 + p \\[5pt]
		\rho v_1 v_2 \\[5pt]
		\rho v_1 H
	\end{pmatrix}, \qquad
	\mathbf{F}_2 (\bm{W}) =
	\begin{pmatrix}
		\rho v_2 \\[5pt]
		\rho v_1 v_2 \\[5pt]
		\rho v_2^2 + p \\[5pt]
		\rho v_2 H
	\end{pmatrix},
\end{equation}
and
\begin{equation}\label{Euler_Eq4b}
	\mathbf{S}(\bm W;\mathbf x)=
	\begin{pmatrix}
		0 \\[5pt]
		-\rho\,\partial_{x^1}\phi \\[5pt]
		-\rho\,\partial_{x^2}\phi \\[5pt]
		-\rho\,\mathbf v\cdot \nabla\phi
	\end{pmatrix}.
\end{equation}
The system is fully hyperbolic, and in particular  $\forall \vec n \in\mathbb{R}^d$ with $|\vec n|=1$,
the flux Jacobians
\begin{equation}\label{Euler_Eq5}
\mathbf{J}_{\vec n} = \sum_{j=1}^d \mathbf{J}_{j} n_j\in \mathbb R^{ s\times s},\quad \text{with }
\mathbf{J}_{j} := \dfrac{\partial\mathbf{F}_j}{\partial\bm{W}} \in \mathbb R^{ s\times s} \text{ for }j=1,\dots, d, 
\end{equation}
admit in  2d the  real eigenvalues $( \mathbf{v} \cdot \vec n - \sqrt{\gamma p/\rho},  \mathbf{v} \cdot \vec n,  \mathbf{v} \cdot \vec n,  \mathbf{v} \cdot \vec n +  \sqrt{\gamma p/\rho} )$, and a full set of linearly independent eigenvectors.  In the following, we will also sometimes denote by $\mathbf{J}$ the order-three-tensor
\begin{equation}\label{Euler_Eq6b}
\mathbf{J} = (\mathbf{J}_1,\dots,\mathbf{J}_d) \in \mathbb R^{d\times s \times s}.
\end{equation}

System~\eqref{Euler_Eq0} admits a rich ensemble  of  non-trivial stationary states that are governed by  the constraints
\begin{equation}\label{Euler_Eq6}
\begin{split}
	 \nabla \cdot (\rho \mathbf{v}) =  &0, \\
	 \nabla \cdot (\rho \mathbf{v} \otimes \mathbf{v}) + \grad{p} = &-\rho \grad{\phi}, \\
	  \nabla \cdot (\rho H \mathbf{v}) = &-\rho \mathbf{v} \cdot \grad{\phi}.
\end{split}
\end{equation}
These constraints can be written in many different forms, all equivalent to the one above in the smooth case.
In particular,  steady states satisfy
\begin{equation}\label{eq:stationary-state}
	\nabla \cdot (\rho \mathbf{v}) = 0, \qquad
	(\rho \mathbf{v} \cdot \grad)\mathbf{v} + \grad p = -\rho \grad\phi, \qquad
	\mathbf{v} \cdot \grad (H + \phi) = 0,
\end{equation}
which correspond respectively to mass conservation, momentum balance, and preservation of the total enthalpy along streamlines. 

An interesting family of solutions is given by  hydrostatic equilibria characterized by:
\begin{equation}
	\mathbf{v} \equiv 0, 
	\qquad \grad p = - \rho \grad \phi,
\end{equation}
which can be satisfied only when the following compatibility condition holds:
\begin{equation}
	\nabla \times (\rho \grad \phi) = 0.
\end{equation}

Other equilibria where the velocity is not zero are also relevant in many applications. For example, isentropic vortices are stationary states of the system, and are often used as test cases for numerical methods \cite{ricchiuto2021analytical,barsukow2025genuinely}. In these vortices, the velocity field is non-zero and has a rotational structure, while the pressure and density fields are arranged in such a way that the centrifugal forces are balanced by the pressure gradient, resulting in a steady state.

\subsection{Well balanced or stationarity preservation?}

Identifying and preserving stationary states at the discrete level is crucial in many physical applications, 
particularly when flows close to equilibrium are investigated, for which  spurious numerical errors can spoil the true dynamics. 
One needs, however, to be precise concerning the notion of stationarity preservation, 
as the literature is rich in contributions related to this topic. 

For balance laws in one space dimension, the notion of well balanced schemes is rooted in early works
by P.L. Roe \cite{Roe87},  Bermudez-Vazquez \cite{bv94}, and  LeRoux-Greenberg \cite{doi:10.1137/0733001}, among others. 
Since then the notion of discrete  preservation of non-trivial equilibria  has  been 
refined in the context of all numerical methods: finite  differences
\cite{Li2015,Li2020,Xu2025,Wang2022,Zhang2023,Ren2024}, finite volumes  \cite{Audusse2004,Castro2008,Noelle2007,Kurganov2020,Ciallella2022,Chertock2022,Ciallella2023,Zhang2025}, 
discontinuous Galerkin    \cite{Xing2014,Liu2022,Zhang2023,Zhang2025,Yang2021,mantri2024fully},  finite elements 
\cite{Azerad2017,Guermond2018,Behzadi2020, Knobloch2024,Micalizzi2024}, residual distribution   
\cite{Ricchiuto2009,Chang2023,Ricchiuto2015,Ricchiuto2011,Arpaia2018,Arpaia2020}, Active Flux  \cite{Barsukow2024,Abgrall2024,Liu2025}, and so on.
It is important to highlight the existence of two standpoints. 
On one hand, methods which are able to preserve \textit{exact} steady states in some projected form (cell average, pointwise values, moments, etc.).
On the other, the idea of preserving some discrete approximation of the steady state, obtained with some enhanced numerical method,
possibly with higher order of consistency accuracy w.r.t. the underlying approach, see \emph{inter alia} 
\cite{math9151799,GOMEZBUENO2021125820,GOMEZBUENO202318,Ciallella2023,Mantri2024,Kazolea2025,AbgLiuRic25},  based on  discrete minimization procedure, or on high order ODE solvers. 
In this case, one could speak of \textit{approximate} well-balanced, or \textit{discrete} well-balanced.\\

In multiple space dimensions,  several works have shown how to treat solutions at rest with hydrostatic  equilibrium pressure/potential,
see e.g. \cite{Audusse2004,THOMANN2020109723,CZ17,chertock2018well} for shallow water, and Euler with gravitation.
Some works have also considered the preservation of one-dimensional  moving solutions in a two-dimensional setting 
\cite{MICHELDANSAC2021105152,Ricchiuto2011,Ricchiuto2015,Ciallella2025,Mantri2024}.  
To study perturbations of \emph{a-priori given} multidimensional equilibria, the  so called  deviation method, or  reconstruction of fluctuations,
can be used. In this case,  the numerical unknown is the deviation w.r.t. the given equilibrium,  and the numerical discretization is corrected so that the   equilibrium   is preserved when the deviation is zero (see e.g.
\cite{doi:10.1137/060674879,doi:10.1137/18M1196704,GCD18,Ciallella2022,Dumbser2024,BERBERICH2021104858}).
In parallel, some studies have focused on the rotating shallow water equations, emphasizing the preservation of the linear geostrophic equilibrium under the influence of Coriolis forces (see, for example, \cite{audusse2025energy,delpino2025} and references therein), which represents a specific yet multidimensional equilibrium state.

This paper deals with a more challenging question. Following works done in one dimension, we want to design 
discretizations that are compatible with all possible genuinely multidimensional stationary states.
Even in the homogeneous case, the difficulty is the  form of the first equation in \eqref{eq:stationary-state}, which requires embedding the numerical method
with a solenoidal preserving condition. 
For hyperbolic systems, one  key of the  problem is numerical dissipation. 
For linear acoustics, which are a linearization of \eqref{Euler_Eq0},  stationary states are defined by a constant pressure, and 
the constraint $\nabla\cdot \mathbf{v} =0$.  
In this case,  previous works 
\cite{barsukow17a,barsukow2025structure} have clarified that numerical dissipation based on some discrete Laplacian of the main unknowns
is incompatible  with the preservation of the solenoidal condition, unless the velocity is constant. 
The numerical dissipation must in this case be based on a grad(div) vector Laplacian $\nabla ( \mu(\Delta x) \nabla\cdot \mathbf{v}).$ 
This structural condition is extremely important, as it also relates to
the low Mach behaviour of \eqref{Euler_Eq0}, as shown in \cite{jung2022steady,jung2024behavior}. 
In particular, schemes that have 
this structure are in principle well behaved in the low Mach limit. In \cite{jung2022steady,jung2024behavior,barsukow17a,barsukow2025structure} it is also shown that  
this condition and the preservation of a curl-type involution  for the  velocity are also related.
However, we still  need a   definition of  what is a ``\emph{stationarity preserving}''  scheme.


\subsubsection{Stationarity preservation: definition}
For  linear   acoustics, an interesting definition is provided in \cite{barsukow17a}:   a stationarity preserving scheme is 
one that admits   a rich set  discrete steady states including a  discretization of $\nabla \cdot \mathbf{v} = 0$, 
without additional constraints.     
The definition of rich set given in  \cite{barsukow17a} relies strongly on the analysis of the problem in Fourier space. 
A rigorous definition  in physical  space is provided in \cite{Barsukow_nodeconservative2025},   which is somewhat 
closer to the approximate well-balanced approach introduced before.

Here, we first introduce a more general definition of stationarity preserving methods and then we try to characterize it through some necessary conditions.

\begin{definition}[Stationarity preserving (SP) discretization]\label{def:SP_original}
  Consider an evolutionary hyperbolic problem \eqref{Euler_Eq3} admitting a rich family of equilibria $\mathcal{W}$ such that for all $\mathbb W \in \mathcal{W}$ we have that $\nabla \cdot \mathbb{F}(\mathbb W) = \mathbf{S}(\mathbb W)$.
  Consider a discrete set of $N_h$ nodes $\mathbf{x}_\alpha$ of the geometry, for $\alpha =1,\dots, N_h$, and an approximation of the solution 
  at these nodes $\mathbf{W}_\alpha  \approx \mathbf{W}(\mathbf{x}_\alpha)$ for $\alpha = 1,\dots, N_h$.
  Consider a discretization of \eqref{Euler_Eq3} evolving the array $\lbrace \mathbf{W}_\alpha \rbrace_{\alpha=1}^{N_h}$ according to
  \begin{equation}\label{eq:general_num_method}
  \sum_{\beta =1}^{N_h} M_{\alpha \beta} \dfrac{d\bm{W}_\beta }{dt} + \mathsf{R}_\alpha =0 \quad\text{plus boundary conditions}
  \end{equation}
  for some 
matrix  $M$ of entries $M_{\alpha \beta}\in \mathbb R$ and $\mathsf{R}_\alpha \in \mathbb R^{s}\,,\; \forall \alpha$, an evolution discrete operator.
The method \eqref{eq:general_num_method} is said to be \emph{stationarity preserving} (SP) if for all $\mathbb W \in \mathcal{W}$ there exists some discrete
approximation $\mathbf W_h$ such that $\mathbf{W}_h(\mathbf{x}_\alpha)=\mathbf{W}_\alpha$, 
 $\mathsf{R}_\alpha(\mathbf W_h) =0$ for all $\alpha$, and $\lim_{h\to 0} \lVert \mathbf W_h - \mathbb W \rVert = 0$.
\end{definition}

We highlight here that this definition is very close to the concept of fully approximately well-balanced schemes  (see \cite{Kazolea2025,mantri2024fully} and references therein). 
The main difference is the context of application. Typically, fully well-balanced schemes are designed in one-dimensional problems and are driven by boundary conditions, solutions that an ODE solver can approximate.
Here, we extend this concept also to multi-dimensional problems that are not necessarily driven by boundary conditions, for example, compact isentropic vortices, but not only.
It is also important  to note that of course the condition $\mathsf{R}_\alpha=0$ necessarily  involves some members of the  family of equilibria $\mathcal{W}$, however not necessarily
all of them. An example of a method not verifying the above condition is discussed later in the paper.

Clearly, the above   definition is very general and does not give any design criterion for numerical methods. 
We introduce a condition for a scheme that will characterize the schemes that are stationarity preserving. At the moment, the authors are not sure if this condition is necessary, nor if it is sufficient, but it seems a good starting point to characterize stationarity preserving schemes.  
 
\begin{condition}[Dual residuals  condition]\label{cond:SP} 
Necessary conditions  for Definition \ref{def:SP_original} to be realizable are the following 
\begin{enumerate}[label={C.\arabic*}]
\item \label{item:dual_residual_primal_residual} there exists another collection of geometrical entities identified in the points $\tilde {\mathbf{x}}_\gamma$, for $\gamma =1,\dots, \tilde{N}_h$ not necessarily coinciding
with the solution location $\mathbf{x}_\alpha$, and at these points $\tilde {\mathbf{x}}_\gamma$ are associated discrete approximations of the residual $\Psi_\gamma \approx (\nabla\cdot  \mathbf{F}  - \mathbf{S})_{\gamma}$ for $\gamma =1,\dots, \tilde{N}_h$, such that \begin{equation}\label{eq:cond_implication}
\Psi_\gamma=0  \;\forall\,\gamma =1,\dots, \tilde{N}_h \Rightarrow  \mathsf{R}_\alpha =0 \,\;\forall\, \alpha = 1, \dots, N_h;
\end{equation}
\item \label{item:SP_number_of_unknowns} $N_h \geq  \tilde{N}_h$, i.e., the total number of discrete unknowns is larger (or equal) than the number of   independent  relations  provided by the  local stationarity conditions $\Psi_\gamma=0 \;\forall\,\gamma =1,\dots, N_h.$
\end{enumerate}
 \end{condition}

To clarify the previous condition, we provide a couple of examples of nodes $\mathbf{x}_\alpha$ and residuals $\Psi_\gamma$ for $\gamma =1,\dots, \tilde{N}_h$. 
In finite element, for a Cartesian grid, the $\mathbf{x}_\alpha$ can be the grid of points defined by the tensor product of the Gauss-Lobatto points in each direction, while the $\Psi_\gamma$ can be defined as the residuals of the PDE at the subcells defined by the Cartesian mesh within the grid of points inside each element.
In a finite volume setting, the $\mathbf{x}_\alpha$ can be the cell centers, while the $\Psi_\gamma$ can be defined as the residuals of the PDE at the cell interfaces or at corner points.

This is typically done when considering artificial diffusion terms, which are defined at the dual mesh level rather than at the solution points, e.g. at cells or subcells for FE and at interfaces for FV.\\

The above condition contains several important parts. Firstly, in general the evolution part of the scheme $\mathsf{R}_\alpha$ contains both a consistent approximation
of the residual $ (\nabla\cdot  \mathbf{F}  - \mathbf{S})_{\alpha}$, and some form of numerical dissipation. 
This is not true of course for centered schemes, to  which
the definition  above applies indeed with $\mathbf x_\alpha =\tilde{\mathbf x}_\alpha$ and $N_h=\tilde{N}_h$.  These schemes have however little interest for \eqref{Euler_Eq3} as soon as one considers explicit time stepping because of their instability.
So, we will rule out this option here. 

The second important point is that Condition~\ref{item:dual_residual_primal_residual} is a necessary one: there must exist some local discrete approximation
of the divergence (or divergence minus source), whose kernel belongs to kernels of \emph{both the consistent part of the scheme and of the numerical dissipation}. 
There are many methods that satisfy this necessary condition. Examples are  residual distribution (RD) schemes \cite{Abgrall2022,RD-ency,AR:17},
as well as certain types of so-called residual based compact  (RBC) schemes by Lerat and collaborators \cite{lerat,lerat2,Corre20071567,LERAT201231}. 
A very important point is that this necessary condition must hold at the fully discrete level.  This is even a stricter constraint.
For example, depending on the quadrature formulas, or on the definition of the discrete residuals used in the dissipation, 
not all  the formulations of RD and RBC schemes  verify this necessary condition. Similarly,  
stabilized finite elements, as e.g. the successful streamline upwind Petrov-Galerkin (SUPG)  method \cite{brooks82,hughes86,HuS:10}, do not verify this condition,
despite their fully residual character, and of the proper grad(div) structure of their numerical dissipation, 
as rigorously proved in \cite{barsukow2025structure} for acoustic equations.

Lastly, Condition~\ref{item:SP_number_of_unknowns} is a \emph{realizability condition}. It is necessary for a discrete stationary state satisfying all the constraints to actually exist.
It is the most difficult to satisfy in practice on general meshes. The Cartesian mesh setting is 
the easiest one to this end, although some exceptional configurations on non Cartesian meshes are possible, as shown in \cite{Barsukow_nodeconservative2025}.
To satisfy the Condition~\ref{item:SP_number_of_unknowns}, one should at least count the unknowns $N_h \cdot s$ and number of independent relations (plus the boundary  conditions constraints) and check whether there are solutions for $\mathsf{W}$ that are compatible with $\Psi_\gamma = 0$ for all $\gamma=1,\dots, \tilde{N}_h$, see e.g. \cite{barsukow2025structure,Barsukow_nodeconservative2025,barsukow2025genuinely,barsukow2025stationaritypreservingnodalfinite}.

\begin{remark}[Dual residuals implication for stationarity: necessary condition for SP]\label{ram:SP}
If the method \eqref{eq:general_num_method} verifies Condition~\ref{cond:SP}, then it has good chances of being stationarity preserving in the sense of Definition~\ref{def:SP_original}.
The difficult part in showing that Condition~\ref{cond:SP} is necessary for stationarity preservation is to pass from the residuals $\Psi_\gamma$ to the actual discrete solution $\mathbf{W}_h$. Indeed, for linear schemes, it is possible to directly put conditions on the solution variables, see \cite{barsukow2025structure}, but for nonlinear schemes this is less straightforward.
\end{remark}

\begin{remark}[SP and boundary conditions] \label{rem:SPBCs} \underline{Condition~\ref{cond:SP}  does not account for   boundary conditions}.
  Indeed, in multidimensional settings, many stationary states are not driven by boundary conditions, but rather by the balance of fluxes and sources in the interior of the domain, which are selected by the initial conditions.
In Condition~\ref{cond:SP}, we are thus considering states with constant  flow at infinity, or  periodic flows, or even local compactly supported states  over static background.
As an example, one can think of  a single  isolated vortex, or a set of isolated,  stationary  vortices. 

If one wants to include the effects of boundary conditions on the definition of the stationary states, the Condition~\ref{cond:SP} should be modified to include them in the right-hand sides of \eqref{eq:cond_implication}, as well as in the counting of the degrees of freedom in the second condition.
This is not the scope of this paper.
\end{remark}

\subsection{Multidimensional stationarity preserving/global flux quadrature}\label{sec:multid_GF}

In \cite{barsukow2025structure,barsukow2025stationaritypreservingnodalfinite,barsukow2025genuinely} some SP numerical methods have been introduced.
The key idea   proposed in these works to enforce the SP conditions 
is  to  systematically discretize  the divergence operator using  the following representation, in the 2D homogeneous case,  
\begin{equation}\label{eq:GF0}
\nabla\cdot  \mathbf{F} \equiv   \nabla\cdot \left( \partial_{x^2}\int^{x^2} \mathbf{F}_1(x^1,\eta)\mathrm{d}\eta,\, \partial_{x^1}\int^{x^1} \mathbf{F}_2(\xi,x^2)\mathrm{d}\xi \right) = \partial_{x^1 x^2} (\mathcal{F}_1 +\mathcal{F}_2)
\end{equation}
having set
\begin{equation}\label{eq:GF1}
\mathcal{F}_1:= \int^{x^2} \mathbf{F}_1(x^1,\eta)\mathrm{d}\eta\;,\quad \mathcal{F}_2:= \int^{x^1} \mathbf{F}_2(\xi,x^2)\mathrm{d}\xi,
\end{equation}
with a redefinition of the   flux vector  as  
\begin{equation}\label{eq:GF0a}
 \mathbf{F} = \left( \partial_{x^2}\mathcal{F}_1,\, \partial_{x^1} \mathcal{F}_2\right) .
\end{equation}
Though analytically these two formulations are equivalent, at the numerical level, we will introduce some errors in the integral and derivative process, which will make the two operators different. As we will see, in practice both the integration and the derivatives are performed using local 
(finite element) polynomials, providing essentially a local pre-processing of the fluxes. This representation has many interesting properties.
For example, in \cite{barsukow2025structure}, the integrals above are evaluated with some high-order quadrature formula. 

Now, the stationarity condition $\nabla \cdot \mathbf{F} = 0$ can be expressed in terms of the quantity
\begin{equation}\label{eq:GF2}
 \mathcal{G} := \mathcal{F}_1 +\mathcal{F}_2,
 \end{equation}
 and reads
 \begin{equation}\label{eq:GF3}
 \mathcal{G} = \Gamma_0+ \mathbf{f}(x^1) + \mathbf{g}(x^2), 
\end{equation}
for $\Gamma_0\in \mathbb R^{r}$, $\mathbf{f}, \mathbf{g}:\mathbb R \to \mathbb R^{r}$ arbitrary functions.  
It is important to see here that the univariate character of the functions $ \mathbf{f}$ and $ \mathbf{g}$ makes their control relatively easy via boundary conditions,
thus one can assume these functions to be zero at one boundary (constants being accounted for into $\Gamma_0$). Another interesting property is that, in the neighbourhood of a fixed location $\mathbf{x}_\star$,
we can consider the local residual  
  \begin{equation}\label{eq:GF4}
  \begin{aligned}
      \Psi_{\mathbf{x}_\star}(\mathbf{x})  := &
  \int\limits_{x^{2,\star}}^{x^2} \left( \mathbf{F}_1(x^1,\eta) -\mathbf{F}_1(x^{1,\star},\eta) \right) \mathrm{d}\eta +
  \int\limits_{x^{1,\star}}^{x^1} \left( \mathbf{F}_2(\xi,x^2) -\mathbf{F}_2(\xi,x^{2,\star}) \right)   \mathrm{d}\xi\\
  = &  \int\limits_{x^{1,\star}}^{x^1}  \int\limits_{x^{2,\star}}^{x^2}\nabla\cdot  \mathbf{F} \, \mathrm{d} \xi\, \mathrm{d} \eta \;.
  \end{aligned}
  \end{equation}
  Trivially, we can show that 
  $$
  \partial_{x^1x^2} \mathcal{G}   =   \partial_{x^1x^2}  \Psi_{\mathbf{x}_\star},
  $$
  so  that the representation \eqref{eq:GF0} is also equivalent to (in the neighbourhood of   $\mathbf{x}_\star$ fixed)
    \begin{equation}\label{eq:GF5}
    \nabla\cdot  \mathbf{F} \equiv   \partial_{x^1x^2}  \Psi_{\mathbf{x}_\star},
  \end{equation}
  and the stationarity condition \eqref{eq:GF3}  can be now expressed as 
      \begin{equation}\label{eq:GF6}
      \Psi_{\mathbf{x}_\star}(\mathbf{x}) =0 \,,\;\forall \,\mathbf{x}.
        \end{equation}
 Another interesting aspect is that given a vector  $ \mathbf{v}=(v_1,v_2)$, the solenoidal/stationarity condition  associated to this representation  is, up to appropriate boundary conditions, and up to a constant 
    \begin{equation}\label{eq:GF7}
\int^{x^2} v_1 \mathrm{d}x^2 =-  \int^{x^1} v_2 \mathrm{d}x^1.
        \end{equation}
Setting 
$$
\psi := \dfrac{1}{2}\left( \int^{x^2} v_1 \mathrm{d}x^2 -  \int^{x^1} v_2 \mathrm{d}x^1 \right) =\int^{x^2} v_1 \mathrm{d}x^2 =-  \int^{x^1} v_2 \mathrm{d}x^1
$$
one can easily show that when solenoidal/stationary  condition \eqref{eq:GF7} is verified, the  
$  \mathbf{v}$ can be written as 
\begin{equation}\label{eq:GF8}
  \mathbf{v} = ( \partial_{x^2}\psi, -\partial_{x^1}\psi) = \nabla^{\perp} \psi,
\end{equation}
which is the classical curl-potential condition for solenoidal fields. 

Finally, for non-homogenous problems, the generalization of \eqref{eq:GF0}  and \eqref{eq:GF5} can be written as 
    \begin{equation}\label{eq:GF9}
    \nabla\cdot  \mathbf{F}  - \mathbf{S} \equiv   \partial_{x^1x^2} (\mathcal{F}_1 + \mathcal{F}_2 + \mathcal{S})  
  \end{equation}
  with the definitions \eqref{eq:GF1}, and now setting
      \begin{equation}\label{eq:GF10}
      \mathcal{S} :=- \int^{x^1}\int^{x^2} \mathbf{S}\,\mathrm{d}x^1 \mathrm{d}x^2 .
        \end{equation}
As before, in the neighbourhood of a fixed location $\mathbf{x}_\star$ we can equivalently write 
      \begin{equation}\label{eq:GF11}
           \nabla\cdot  \mathbf{F}  - \mathbf{S}  \equiv  \partial_{x^1x^2}\Psi_{\mathbf{x}_\star}
        \end{equation}
        with now
          \begin{equation}\label{eq:GF12}
          \begin{aligned}
      \Psi_{\mathbf{x}_\star}(\mathbf{x})  := &
  \int\limits_{x^{2,\star}}^{x^2} \left( \mathbf{F}_1(\mathbf{x}) -\mathbf{F}_1(x^{1,\star},x^2) \right) \mathrm{d}x^2  +
  \int\limits_{x^{1,\star}}^{x^1} \left( \mathbf{F}_2(\mathbf{x}) -\mathbf{F}_2(x^1,x^{2,\star}) \right)   \mathrm{d}x^1-  \int\limits_{x^{1,\star}}^{x^1}  \int\limits_{x^{2,\star}}^{x^2}\mathbf{S}(\mathbf{x})\,\mathrm{d}x^1 \mathrm{d}x^2\\
  =   &\int\limits_{x^{1,\star}}^{x^1}  \int\limits_{x^{2,\star}}^{x^2} (\nabla\cdot  \mathbf{F}(\mathbf{x}) - \mathbf{S}(\mathbf{x}))\mathrm{d}x^1 \mathrm{d}x^2 \;.
  \end{aligned}
  \end{equation}
Again, the stationary conditions are simply expressed by \eqref{eq:GF3} and \eqref{eq:GF6}.       
Previous works \cite{barsukow2025structure,barsukow2025genuinely,barsukow2025stationaritypreservingnodalfinite} have demonstrated that
when this approach is combined with a stabilization compatible with the stationary divergence condition (see discussion in the previous section),
the resulting numerical methods show enormous accuracy enhancements for arbitrary stationary states,
as well as perturbations of these equilibria. In the nonlinear case, even at low order, the 
previous work \cite{barsukow2025genuinely} has shown great improvements in the approximation of hydrodynamic instabilities,
and well-behaved solutions in the limit of Mach numbers approaching zero. Other interesting properties
can be shown, as the existence of involutions, and  local   super-convergence dictated only by the quadrature formulas used in \eqref{eq:GF1}.

Due to the simultaneous treatment  of all the terms of the PDE, and to the introduction of the primitive of the source term, bearing similarities
with one-dimensional ideas discussed e.g. in \cite{mantri2024fully,Cheng2019}, this formulation has been referred to  as global flux quadrature in   \cite{barsukow2025structure}. 
The terminology  ``global''  may be  misleading.  It must be noted that representations of the source terms using derivatives of some equilibrium flux 
have also been referred to in the past as spatial \emph{localization}  of source terms (see e.g. \cite{Gosse-toscani03}). 
Indeed, the schemes obtained with this idea are fully local. As we will see, this can be proven 
simply using the residual based representation \eqref{eq:GF10}-\eqref{eq:GF11} on mesh cells \cite{barsukow2025genuinely}. 
The unified treatment of all terms in the PDE aligns closely with the concept of fully residual methods, which naturally connect to residual distribution schemes \cite{Abgrall2022,amr2025}.

\subsection{Contribution and structure of this paper}

In this paper, we provide the first very high order extension of the stationarity preserving (previously known as global flux) quadrature of   \cite{barsukow2025structure}  to  highly nonlinear systems of balance laws.
We consider to this end the Euler equations with gravitational potential. 
We provide a description of the method and we generalize
the consistency estimates provided in  \cite{barsukow2025structure} for general stationary states to the nonlinear case, and we provide a heuristic
justification of its low Mach number consistency. 
We also introduce a treatment of the gravity source allowing to retain exactly iso-thermal hydrostatic equilibria.
We perform a detailed benchmark study on stationary solutions with
a wide range of Mach numbers, and on hydrodynamic instabilities, as well as problems involving weak shocks. 
The numerical results show that  the stabilized SUPG finite element method with the global flux quadrature is free of mesh imprinting,   
and outperforms 
the classical one, for both smooth and non-smooth problems, for all Mach numbers,  and  for both stationary  problems and hydrodynamic instabilities.\\

The structure of the paper is the following. Section~\ref{sec:discretization} introduces the notation for the mesh and finite element spaces, the weak formulation of the equations,
the streamline upwind stabilization method, and the genuinely explicit Defect Correction time stepping strategy used in the paper.
In Section~\ref{sec:lack}, we  discuss the  stationarity preserving properties of the standard methods, recalling the negative results proven in our previous work, 
and discussing possible exceptions and their drawbacks. Section~\ref{sec:stationarityGF} is devoted to the  presentation of the stationarity preserving/global flux quadrature method for systems
of  non-linear balance laws. The discrete equations obtained are analyzed in Section~\ref{sec:properties} with respect to local conservation,   and super-convergence at steady state.
An extension to preserve   exactly hydrostatic solutions when solving the Euler equations with gravitational potential is given in Section~\ref{sec:hydro_equi}.
Finally, a thorough numerical benchmark study showing the enhancements brought by the new formulation is presented in Section~\ref{sec:results}. The paper is concluded in Section~\ref{sec:conclusion} by a short summary,
and, in Appendix~\ref{app:consistency}, with a super-convergence proof.

\section{Discretization framework: stabilized finite elements}\label{sec:discretization}

\subsection{Mesh, nodal unknowns, basis functions}

We consider discrete approximations based on tensor finite elements defined on 
Cartesian tessellations of the domain.  So, let $\Omega$ be  a rectangular  domain $\Omega := \Omega^{x^1} \times \Omega^{x^2} \subset \mathbb R^2$. We define 
   cells $E_{ij}:=   E^{1}_i \times E^{2}_j := [x^1_{i},x^1_{i+1}]\times [x^2_{j}, x^2_{j+1}]$ with $i=0,\dots, N_1-1$, $j=0,\dots,N_2-1$.
We  set   $|E^{1}_i|=h_{1,i}$, $|E_j^{2}|=h_{2,j}$  and $h:=\min_{i,j}( h_{1,i},h_{2,j})$. 
In this work, we assume   $h_{1,i}=h_1\;\forall\,i$  and $h_{2,j}= h_2\;\forall\,j$.  The computational domain is  $\Omega_{h}:=\cup_{i=0}^{N_1-1}\cup_{j=0}^{{N_2}-1} [x^1_{i},x^1_{i+1}]\times [x^2_{j},x^2_{j+1}]$.
To be fully precise,  since $\Omega$ is rectangular, $\Omega = \Omega_h$, so we will simply use $\Omega$ below. We will in particular write that  $\Omega =[x^1_0, x^1_0+L_1]\times  [x^2_0, x^2_0+L_2]$.\\

We use continuous nodal FEM spaces of polynomial degree $K$. In particular, 
in each one-dimensional cell $ E^{d}_\ell$ we introduce $K+1$ interpolation points
$\{x^{d}_{\ell,p}\}_{p=0,\dots,K} \in E^{d}_\ell$, with $x^d_{\ell,0} = x^d_{\ell}$ and $x^d_{\ell,K} = x^d_{\ell+1}$,
and  of course $x^{d}_{\ell,0} = x^{d}_{\ell-1,K}$ for $\ell=1,\dots,N_d-1$. In total, we will consider $N_h \equiv (N_1K+1)(N_2K+1)$ points of type $(x^1_{i,p},x^2_{j,k})$.
In this work, we only use  Gauss--Lobatto points in each direction. 
These points are used to define the Lagrangian basis functions $\{\varphi^d_{\ell,p}\}_{{p=0,\dots K}} $ spanning in each direction the local 
space of univariate polynomials $ \mathbb P^K(E^d_\ell)$ of degree at most K.  This allows us to define the tensor based approximation space 
\begin{equation}\label{eq:Vh}
		V^K_h(\Omega):=
		\left\lbrace q \in \mathcal{C}^0(\Omega): q|_E\in \mathbb Q^K\equiv\bigotimes\limits_{d=1,2}\mathbb     P^K(E^{d}),\;\forall E= \bigotimes\limits_{d=1,2}E^{d} \in \Omega\right\rbrace.
\end{equation}
Any function  $q_h \in V^K_h$ can be written as  
\begin{equation}\label{eq:2DFEM_functions}
	q_h(\mathbf{x}) =\sum_{i=0;j=0}^{N_1-1;N_2-1}q_h(\mathbf{x})\big|_{E_{ij}}\;,\quad
	q_h(\mathbf{x})\big|_{E_{ij}} :=  \sum_{p=0;k=0}^{K;K} \mathrm q^{ij}_{pk}\, \varphi^1_{i,p}(x^1)\varphi^2_{j,k}(x^2),
\end{equation}
where $ \mathrm  q^{ij}_{pk}$ are locally numbered nodal values  of $q$ constrained to continuous elements by $\mathrm q_{0k}^{ij} = \mathrm q_{Kk}^{i-1,j}$ for all $k=0,\dots,K$, $i=1,\dots,N_1-1$ and $j=0,\dots,N_2-1$ and $\mathrm q_{p0}^{ij} = \mathrm q_{pK}^{i,j-1}$ for all $p=0,\dots,K$, $j=1,\dots,N_2-1$ and $i=0,\dots,N_1-1$. 
We will use the roman font $\mathrm q$ for the degrees of freedom that refer to a function $q_h \in V^K_h$ in a discrete Finite Element space. 
In this work,  these values will correspond to the collocated values of $q$ at standard Gauss--Lobatto interpolation points. 

Lastly, note that a given  interpolation point $\mathbf{x}_{\alpha}=(x^1_{\alpha^1},x^2_{\alpha^2})\in E_{ij}$   will correspond to a tuple $(i,p;j,k)$ such that locally 
 $\mathbf{x}_{\alpha}:=( x^{1}_{i,p},\, x^{2}_{j,k} )$. The correspondence $\alpha\mapsto (i, p; j,k) $ is referred to as the local-to-global
reordering of the unknowns as well as their univariate versions $\alpha^1 \mapsto (i,p)$ and $\alpha^2 \mapsto (j,k)$. We will sometimes use the notation $(i(\alpha),p(\alpha); j(\alpha), k(\alpha))$ to make use of this  map. 
Moreover, we will denote with 
\begin{equation}
\varphi_\alpha(\mathbf{x}) :=\varphi^1_{\alpha^1}(x^1)\varphi^2_{\alpha^2}(x^2):=  \varphi^1_{i(\alpha),p(\alpha)}(x^1)\varphi^2_{j(\alpha),k(\alpha)}(x^2)
\end{equation}
for $\alpha =1,\dots, N_h$ the basis functions spanning $V^K_h$ in the global numbering, $\alpha^m = 1,\dots, (N_mK+1)$, for $m=1,2$, so that $\varphi_\alpha(\mathbf{x}_\beta) = \delta_{\alpha\beta}$ for all $\alpha,\beta =1,\dots, N_h$.
 

\subsection{Weak formulation: discrete divergence}

The  semi-discrete unstabilized continuous Galerkin weak form of the  generic system of balance laws  \eqref{Euler_Eq3} reads as  for every $\varphi \in V^K_h$
\begin{equation}\label{eq:weakG0}
\int\limits_{\Omega}\varphi \dfrac{d\mathbf{W}_h}{dt}\,d\mathbf{x} + 
\int\limits_{\Omega}\varphi \nabla\cdot  \mathbf{F}_h(\mathbf{W}_h)\,d\mathbf{x}=
\int\limits_{\Omega}\varphi \, \mathbf{S}_h(\mathbf{W}_h;\mathbf{x})\,d\mathbf{x}.
\end{equation}
For    $\mathbf{W}_h,\, \mathbf{F}_h,\,  \mathbf{S}_h \in (V^K_h)^s$   the integral above can be easily evaluated.
In particular, due to the tensorized polynomial approximation, they can all be 
expressed in terms  of classical one-dimensional mass matrix and derivative operators.
For example, the first integral involves the mass matrix $\mathsf{M}$ which can be classically expressed using local assembly and  the local to global renumbering, using as $\varphi = \varphi_\alpha$ for all $\alpha = 1,\dots, N_h$
 $$
 \int\limits_{\Omega}\varphi_\alpha \dfrac{d\mathbf{W}_h}{dt}\,d\mathbf{x} \equiv
  \sum\limits_{E_{ij}\in\Omega}  \sum\limits_{\beta \in E_{ij}}    [\mathsf{M}^{E_{ij}}]_{\alpha\beta}\dfrac{d\mathbf{W}_{\beta}}{dt}\;, 
  $$
  with, for any $\mathbf{x}_{\alpha},\mathbf{x}_{\beta}\in E_{ij}$,
  $$
   [\mathsf{M}^{E_{ij}}]_{\alpha\beta}:=
 \int_{x^1_{i}}^{x^1_{i+1}} \!\!\varphi^1_{i(\alpha),p(\alpha)}\!(x^1)\, \varphi^1_{i(\beta),p(\beta)}\!(x^1)\,\mathrm{d}x^1  \;
  \int_{x^2_{j}}^{x^2_{j+1}}\!\! \varphi^2_{j(\alpha),k(\alpha)}\!(x^2)\, \varphi^2_{j(\beta),k(\beta)}\!(x^2)\,\mathrm{d}x^2  \,, 
 $$ 
where the two integrals are the one dimensional mass matrices in the corresponding directions.
The above property can be compactly expressed using the Kronecker product between matrices $\otimes$ (see \cite{barsukow2025structure}  for more details):
$$
\mathsf{M}^{E_{ij}} = M_1^{E_i^{1}}\otimes  M_2^{E_j^{2}}.
$$
where 
$$
[M_m^{E_i^{m}}]_{\alpha^m,\beta^m} :=   \int_{x^m_{i}}^{x^m_{i+1}} \!\!\varphi^m_{i(\alpha^m),p(\alpha^m)}\!(x^m)\, \varphi^m_{i(\beta^m),p(\beta^m)}\!(x^m)\,\mathrm{d}x^m\;, \text{ for }x^m_{\alpha^m}, x^m_{\beta^m} \in E_i^{m}, \; \text{ for }m=1,2.
$$
At the global level, we can define
$$
\mathsf{M}=   M_1\otimes  M_2 
$$
having set the univariate global mass matrices as
$$
[M_m]_{\alpha^m,\beta^m} := \sum\limits_{E_i^{m} \in\Omega^{x^m}} \sum_{\alpha^m,\beta^m \in E^m_i} \left[ M_m^{E_i^{m}} \right]_{\alpha^m,\beta^m}\;,\quad m=1,2\,.
$$
The entries of $\mathsf{M}$ are
$$
[\mathsf{M}]_{\alpha, \beta}=   \sum\limits_{\substack{E_{ij}\in\Omega\\\alpha,\beta\in E_{ij}}}  [\mathsf{M}^{E_{ij}}]_{\alpha \beta} = [M_1\otimes  M_2]_{\alpha  \beta}. 
$$
Note that similar notations are used for all other matrices appearing in the paper.

Thus, we have 
$$
\int\limits_{\Omega}\varphi_\alpha \dfrac{d\mathbf{W}_h}{dt}\,d\mathbf{x} \equiv
\left[ M_1\otimes  M_2 \dfrac{d }{dt} \mathrm{W} \right]_{\alpha}
$$
with $ \mathrm{W} \in \mathbb R^{N_h \times s}$ is the array of nodal values  of the unknowns.

We can proceed similarly  for all integrals. In particular, we have
\begin{equation}\label{eq:localDx1}
 \int\limits_{\Omega}\varphi_\alpha  \partial_{x^1}   \mathbf{F}_1 \,d\mathbf{x}  \equiv \left[
  \sum\limits_{E_{ij}\in\Omega}   (D_1^{E_i^{1}}\otimes M_2^{E_j^{2}} ) \mathrm{F}_1^{E_{ij}} \right]_{\alpha}
\end{equation}
with
\begin{equation}\label{eq:localDx2}
\left[ D_1^{E^1_i} \right]_{\alpha^1,\beta^1} :=  \int_{x^1_{i}}^{x^1_{i+1}} \!\!\varphi^1_{\alpha^1}\!(x^1)\, (\varphi^1_{\beta^1}\!(x^1))'\,\mathrm{d}x^1 \,,\text{ for }x^1_{\alpha^1}, x^1_{\beta^1} \in E_i^{1}\,,
\end{equation}
the  integrated $x^1$ derivative operator,  and $\mathrm{F}_1^{E_{ij}}$ the array of nodal values in $E_{ij}$ of the flux in the $x^1$ direction. 
Again, after global assembly we have  
$$
\int\limits_{\Omega}\varphi_\alpha  \partial_{x^1}   \mathbf{F}_1 \,d\mathbf{x} \ \equiv  \left[ (D_1 \otimes M_2)\mathrm{F}_1 \right]_{\alpha}
$$
with now $\mathrm{F}_1$ is the globally numbered array of nodal values of the flux in the $x^1$ direction.
The definition for the $x^2$ derivative operator is identical. 

Finally, the Galerkin weak form can be written  as the system of  non-linear ODEs
\begin{equation}\label{eq:weakG1}
(M_1\otimes  M_2)  \dfrac{d }{dt} \mathrm{W} +(D_1\otimes  M_2)   \mathrm{F}_1  +(M_1\otimes  D_2)  \mathrm{F}_2=
(M_1\otimes  M_2)  \mathrm{S}\;.
\end{equation}
With this notation we can define local and global integrated divergence operators as
 \begin{equation}\label{eq:weakG2}
 \textsf{div}_h \mathbf{F}  :=  (D_1\otimes  M_2)   \mathrm{F}_1  +(M_1\otimes  D_2)  \mathrm{F}_2\;,\;\;\;
 \textsf{div}_h^{E_{ij}}\mathbf{F} := (D_1^{E_i^{1}}\otimes M_2^{E_j^{2}}) \mathrm{F}_1^{E_{ij}} + (M_1^{E_i^{1}}\otimes D_2^{E_j^{2}}) \mathrm{F}_2^{E_{ij}}\,.
 \end{equation}

\subsection{Stabilization via streamline upwinding}\label{sec:stabilization}

As discussed in the introduction, the numerical treatment  of hyperbolic problems requires stabilization,
introducing   some form of artificial dissipation. 
When equilibrium solutions are sought, the use of classical Laplacian based operators
is a bad choice as the operator $\Delta\mathbf{u}$  is incompatible with the stationarity condition  $\nabla\cdot\mathbf{F}=\mathbf{S}$.
As shown in \cite{barsukow2025structure}, the classical streamline upwind (SU) term  \cite{brooks82,hughes86}   introduces
a stabilization with a structure that is naturally compatible with the stationarity condition. To recall the main idea, in absence of source terms
this method can be obtained from  the  weak form of the modified problem
\begin{equation}\label{eq:mod_eq}
\partial_t\mathbf{W} + \nabla\cdot\mathbf{F} =  \nabla\cdot\left( \mathbf{J} \,\tau\,  \nabla\cdot\mathbf{F}  \right),
\end{equation}
where we recall that  $\mathbf{J}$ is the order three tensor whose components are the Jacobians  of the fluxes (cf. equation \eqref{Euler_Eq6b}), 
and $\tau$ is in general a matrix depending on both $\mathbf{W}$, and the mesh. At the continuous level, the introduced stabilization
at the right hand side can be  readily shown to introduce entropy dissipation by expressing it in terms of the gradient of the entropy variables.
The interested reader can refer to e.g. \cite{hfm86,Barth98} for this aspect. 
In this work, we are interested in the behaviour of the method
when the numerical dissipation is evaluated by direct interpolation of the fluxes in the form \eqref{eq:mod_eq}, without a change into entropy variables. 
Naturally, in absence of sources, \eqref{eq:mod_eq} is automatically compatible with stationary states, i.e., if $\nabla\cdot\mathbf{F} = \mathbf{0}$, then the stabilization vanishes.
This is one of the reasons for considering the SUPG method, which already has some inherent multi-dimensional flavour.
A detailed analysis of the structure of the stabilization in the case of acoustics can be found in \cite{barsukow2025structure}. 

In presence of sources and for time dependent problems, one needs to account for both the time variation and the forcing terms to
obtain a fully consistent method  verifying an   orthogonality condition. This   requires    using the modified equation model
\begin{equation}\label{eq:mod_eq1}
\partial_t\mathbf{W} + \nabla\cdot\mathbf{F} =  \mathbf{S} + \nabla\cdot\left( \mathbf{J} \,\tau\,  (\partial_t\mathbf{W}+ \nabla\cdot\mathbf{F}  -\mathbf{S})\right).
\end{equation}
Stability is more complex to show in this case. Results  for linear problems are discussed in \cite{BOCHEV20042301,burman1,barsukow2025stationaritypreservingnodalfinite}.
Also in this case, \eqref{eq:mod_eq1} is in principle automatically compatible with all stationary states, i.e., when $\partial_t\mathbf{W}+ \nabla\cdot\mathbf{F}  -\mathbf{S} = \mathbf{0}$ then the stabilization vanishes.
Now, the variational form of \eqref{eq:mod_eq1} reads: for every $\varphi \in V^K_h$
\begin{equation}\label{eq:weakSU0}
\begin{split}
\int\limits_{\Omega}(\varphi  +   \nabla \varphi\cdot \mathbf{J} \,\tau\,) \dfrac{d\mathbf{W}_h}{dt}\,d\mathbf{x}  + &
\int\limits_{\Omega}\varphi \nabla\cdot  \mathbf{F}_h(\mathbf{W}_h)\,d\mathbf{x} \\ +& 
\int\limits_{\Omega} \nabla \varphi\cdot \mathbf{J} \,\tau\, \nabla\cdot  \mathbf{F}_h(\mathbf{W}_h)\,d\mathbf{x} =
\int\limits_{\Omega}(\varphi  +  \nabla \varphi\cdot \mathbf{J} \,\tau \,)    \mathbf{S}_h(\mathbf{W}_h;\mathbf{x})\,d\mathbf{x},
\end{split}
\end{equation}
where, as noted above, the flux Jacobian and $\tau$ are   also evaluated using the same     polynomial expansion in $V_h^K$.  In practice, this means that
both  are  evaluated by quadrature points. In the present work, we use the same points for both interpolation and quadrature, as in the so-called spectral element formulation
\cite{sem-book}. 
In this case, 
if $ \mathbf{J} \,\tau$ are also expanded on the $V_h^K$ basis 
one can  express    the elemental contributions of the 
stabilization integrals as 
\begin{equation}\label{eq:su_local_1}
\int_{E_{ij}}  \nabla \varphi\cdot \mathbf{J} \,\tau\, \dfrac{d\mathbf{W}_h}{dt}\,d\mathbf{x} \equiv 
\left\{ \left((D_1^{E_i^{1}})^{\mathrm{T}}  \otimes M_2^{E_j^{2}} \right) \textrm{diag}(\mathbf{J}_1 \,\tau) 
+ \left(M_1^{E_i^{1}}\otimes  (D_2^{E_j^{2}})^{\mathrm{T}}\right) \textrm{diag}(\mathbf{J}_2 \,\tau)\right\} \dfrac{d }{dt} \mathrm{W}^{E_{ij}}
\end{equation}
and  an analog expression for the source, while
\begin{equation}\label{eq:su_local_2a}
\begin{aligned}
\int\limits_{E_{ij}} \nabla \varphi\cdot \mathbf{J} \,\tau\, \nabla\cdot  \mathbf{F}_h(\mathbf{W}_h)\,d\mathbf{x}\equiv  & 
\left\{ \left( (D_1^{E_i^{1}})^{\mathrm{T}}  (M_1^{E_i^{1}})^{-1} \otimes \id_2^{E_j^{2}} \right)   \textrm{diag}(\mathbf{J}_1 \,\tau)  \left(D_1^{E_i^{1}}  \otimes M_2^{E_j^{2}} \right)  \right\} \mathrm{F}_1^{E_{ij}}\\[-7pt]
 + & \left\{ \left((D_1^{E_i^{1}})^{\mathrm{T}}(M_1^{E_i^{1}})^{-1}\otimes \id_2^{E_j^{2}} \right)     \textrm{diag}(\mathbf{J}_1 \,\tau)   \left(M_1^{E_i^{1}} \otimes D_2^{E_j^{2}}\right)  \right\} \mathrm{F}_2^{E_{ij}} \\
 + & \left\{  \left(\id_1^{E_i^{1}}    \otimes (D_2^{E_j^{2}})^{\mathrm{T}}(M_2^{E_j^{2}})^{-1}  \right)   \textrm{diag}(\mathbf{J}_2 \,\tau) \left( D_1^{E_i^{1}} \otimes M_2^{E_j^{2}}\right)  \right\} \mathrm{F}_1^{E_{ij}}\\[2pt]
  + &  \left\{ \left(\id_1^{E_i^{1}}   \otimes (D_2^{E_j^{2}})^{\mathrm{T}}(M_2^{E_j^{2}})^{-1} \right)   \textrm{diag}(\mathbf{J}_2 \,\tau) \left(M_1^{E_i^{1}}\otimes D_2^{E_j^{2}}\right)    \right\} \mathrm{F}_2^{E_{ij}},
 \end{aligned}
\end{equation}
with $\id_1^{E^1_i}$ and $\id_2^{E^2_j}$ being the identity matrices on the $x^1$ element $E_i^{1}$ and $x^2$ element $E_j^{2}$, respectively, and where $^{\mathrm{T}}$ denotes the transpose operator, and $\textrm{diag}(\mathbf{J}_m \,\tau)$ contain diagonal matrix blocks with  the values of the argument sampled at
quadrature points. 
The usual expressions  for the second order weak derivatives are obtained locally on each element 
if   $\textrm{diag}(\mathbf{J}_m \,\tau)$ are taken constant per element, e.g. in $x^1$ $(D_1^{E_i})^{\mathrm{T}} (M_1^{E_i})^{-1} D_1^{E_i}$, and the second order weak derivative matrices are obtained globally
if this term is  constant in the domain. Note that refactoring terms and using \eqref{eq:weakG2}, the last expression can be more compactly written as 
\begin{equation}\label{eq:su_local_2}
	\begin{split}
		\int\limits_{\Omega} \nabla \varphi\cdot \mathbf{J} \,\tau\, \nabla\cdot  \mathbf{F}_h(\mathbf{W}_h)\,d\mathbf{x}\equiv  &
		\sum_{E_{ij}\in\Omega}\left\{ \left( (D_1^{E_i^{1}})^{\mathrm{T}}  (M_1^{E_i^{1}})^{-1} \otimes \id_2^{E_j^{2}} \right)   \textrm{diag}(\mathbf{J}_1 \,\tau)  \right\}\textsf{div}_h^{E_{ij}}\mathbf{F}
		\\+ &\sum_{E_{ij}\in\Omega}\left\{ \left(\id_1^{E_i^{1}}   \otimes (D_2^{E_j^{2}})^{\mathrm{T}}(M_2^{E_j^{2}})^{-1} \right)   \textrm{diag}(\mathbf{J}_2 \,\tau)  \right\} \textsf{div}_h^{E_{ij}}\mathbf{F}.
	\end{split}
\end{equation}
 We write the semi-discrete method \eqref{eq:weakSU0} as 
\begin{equation}\label{eq:weakG1-SU}
M \dfrac{d }{dt} \mathrm{W} +
 \textsf{div}_h \mathbf{F}    + \mathsf{st}_h(\mathrm{F}_1,\mathrm{F}_2)=
M  \mathrm{S},
\end{equation}
where
$$
M = M_1\otimes  M_2   + M^{\textsf{SU}}
$$
with $M^{\textsf{SU}}$ implicitly defined by \eqref{eq:weakSU0} and \eqref{eq:su_local_1}, and with $\mathsf{st}_h(\mathrm{F}_1,\mathrm{F}_2)$
given by \eqref{eq:su_local_2a} (or equivalently \eqref{eq:su_local_2}). Note that $M^{\textsf{SU}}=M^{\textsf{SU}}( \mathrm{W})$, and so $M=M(\mathrm{W})$, 
depend on $\mathrm{W}$ through the flux Jacobian $\mathbf{J}$ used in the stabilization.

%
%
%
%
%
%

%
%
%
%

\subsection{Genuinely explicit time stepping via deferred/defect correction}\label{sec:DeC}

In the spectral element setting, the use of the same interpolation points for the numerical quadrature makes the mass matrix $M_1\otimes  M_2$  fully diagonal.
The contribution $M^{\textsf{SU}}$ from the stabilization coming from \eqref{eq:su_local_1} leads however to block entries that are not diagonal.
The resulting matrix needs to be inverted to compute new solution values. 
To obtain a genuinely  explicit  high-order time discretization we use  a  Defect  Correction (DeC) (or Deferred Correction) method \cite{dutt2000spectral,minion2003semi},  
based on the construction   introduced in \cite{RA10} for residual distribution methods, and  generalized in    \cite{abgrall2017high,micalizzi2022new}.
We proceed as follows.


We consider time slabs $[t_n,t_{n+1}]$, and set   $\Delta t= t_{n+1}- t_n $.   Within the each slab,
we  introduce  $M+1$ stages in time $[t_n,t_{n+1}]$, defined by the temporal collocation points $t_n=t_n^0< \cdots < t^m_n \cdots  < t^M_n=t_{n+1}$.
We denote by $\mathrm{W}^{n,m}$ the stage value $m$, with $\mathrm{W}^{n,0}= \mathrm{W}^n\approx \mathrm{W}(t_n)$ and  $\mathrm{W}^{n,M}= \mathrm{W}^{n+1}\approx \mathrm{W}(t_{n+1})$. With an abuse of notation, we will drop the index $n$ as we will consider only the time step $[t_n, t_{n+1}]$.  
Let $\{\gamma^m(t)\}_{m=0}^{M}$ denote the   degree-$M$ Lagrange  polynomial bases
associated to $\{t^m\}_{m=0}^{M}$, and set   
\begin{equation}\label{eq:stageLagrange}
\mathrm{W}(t) = \sum_{m=0}^{M} \gamma_m(t) \mathrm{W}^m\;,\quad
\theta^m_r:=\frac{1}{\Delta t}\int_{t^0}^{t^m} \gamma_r(t) \dd t.
\end{equation}
For the stabilized method \eqref{eq:weakG1-SU},  we now introduce the array of high order operators $\mathcal{L}^{H}$ of components for $m=1,\dots,M$
\begin{equation}
\begin{split}
	\mathcal{L}^{H,m}( \mathsf{W}  ) :=   &\mathsf{M}   (\mathrm{W}^m -  \mathrm{W}^0 )
	+ \Delta t \sum_{r=0}^M \theta_r^m \mathsf{M}^{\textsf{SU}}(\mathrm{W}^r) \dfrac{d\mathrm{W}(t^r)}{dt} \\
	+& \Delta t \sum_{r=0}^M \theta_r^m \{ \textsf{div}_h \mathbf{F}(\mathrm{W}^r)    + \mathsf{st}_h(\mathrm{W}^r)  -  M(\mathrm{W}^r )
	 \mathrm{S}(\mathrm{W}^r)\}  ,
	 \end{split}
\end{equation}
where we recall that $\mathsf{M} =  M_1\otimes M_2$ is diagonal, while $\mathsf{M}^\text{SU}(\mathsf{W})$ and $\mathsf{M}(\mathsf{W})$ are not. We have denoted  with $\mathsf{W} = (\mathrm{W}^{0}, \dots, \mathrm{W}^{M}) $ the array  of  all the stage values,  
and    $d \mathrm{W}(t)/dt$  is evaluated using the Lagrange interpolation \eqref{eq:stageLagrange}. 
The  time scheme obtained  solving the non-linear system  $\mathcal{L}^{H,m}=0,\;\forall m$ 
in each temporal  slab, is the    implicit full table Runge-Kutta continuous collocation method associated  to the specific choice of stage values. The coefficients $\theta^m_r:=\frac{1}{\Delta t}\int_{t^0}^{t^m} \gamma_r(t) \dd t$ are the usual coefficients of the Butcher tableau of the method.
When $\{t^m\}_{m=0}^{M}$ are the Gauss-Lobatto points in  $[t_n,t_{n+1}]$ one obtains the so called LobattoIIIA method \cite{Hairer1993}. In the DeC approach,
the  above operator is only used   as a high order limit of explicit iterations. These iterations are driven by the  low order (first order)  operator  
\begin{equation}
\begin{split}
	\mathcal{L}^{L,m}(\mathsf{W}  ) :=   \mathsf{M}   (\mathrm{W}^m -  \mathrm{W}^0 )
	+& \Delta t \beta^m\ \{ \textsf{div}_h \mathbf{F}(\mathrm{W}^0)    + \mathsf{st}_h(\mathrm{W}^0)  -  M(\mathrm{W}^0 )
	 \mathrm{S}(\mathrm{W}^0)\},
	 \end{split}
\end{equation}
with $\beta^m := \frac{t^m_n-t^0_n}{\Delta t}$. Note that the low order operator is fully explicit, as it only depends on the stage value $\mathrm{W}^0$ at the beginning of the time step.

Now, we define an iterative process to approximate the solution of $\mathcal{L}^H=0$. If $\mathsf{W}^{(k)} $ is the array  of    the stage values at the explicit iteration $k$, the DeC method  computes new values of the solution as 
\begin{equation}
	\label{eq:DeC1}
	\begin{cases}
   	   \mathsf{W}^{(0)} =( \mathrm{W}^{n}, \dots \mathrm{W}^{n},   \dots \mathrm{W}^{n} ),\\
		\mathcal{L}^{L}(\mathsf{W}^{(k)} )=\mathcal{L}^{L}(\mathsf{W}^{(k-1)})-\mathcal{L}^{H}(\mathsf{W}^{(k-1)}), \qquad k=1,\dots, \kappa,\\
		\mathrm{W}^{n+1} := \mathrm{W}^{M,(\kappa)} .
	\end{cases}
\end{equation}
Replacing all the expressions at $k$ iteration and $m$ stage, we obtain the  explicit-Euler-like stage iterations:
\begin{equation}
	\label{eq:DeC2}
	\begin{split}
	     \mathrm{W}^{m,(k)}  =            \mathrm{W}^n
	- &\Delta t \sum_{r=0}^M \theta_r^m \mathsf{M}^{-1}  M^{\textsf{SU}}(\mathrm{W}^{r,(k-1)}) \dfrac{d\mathrm{W^{(k-1)}}(t^r)}{dt} \\
	-& \Delta t \sum_{r=0}^M \theta_r^m \mathsf{M}^{-1}  \{ \textsf{div}_h(\mathrm{W}^{r,(k-1)})    + \mathsf{st}_h(\mathrm{W}^{r,(k-1)})  - M(\mathrm{W}^{r,(k-1)} )\mathrm{S}(\mathrm{W}^{r,(k-1)})\}
	 \end{split}
\end{equation}
 with $\mathsf{M}$ diagonal. 
This method has essentially the structure of a multi-stage Runge-Kutta method, and allows to avoid the expensive inversion of the   mass matrix entries introduced by the SU term. 
A detailed Fourier stability  analysis of DeC time stepping with    stabilized  finite element discretizations 
is provided in \cite{michel2021spectral} on one space dimension, and  in   \cite{michel2023spectral} on structured triangulations in two dimensions.   


%

\section{Lack of stationarity preserving in the classical setting}\label{sec:lack}

 In absence of sources, stationary states of the Galerkin operator are defined by (up to boundary conditions)
 \begin{equation}\label{eq:weakG3}
 \textsf{div}_h \mathbf{F}=0 .
 \end{equation}
 
 For the stabilized method to be stationarity preserving according to Condition~\ref{cond:SP},  we need to  first verify the realizability of first point. 
 We thus need to look at the residual approximations provided by the Galerkin approximation. Then, we look at whether these residuals set to zero could force the stabilization to vanish as well.
 There are only two  possibilities for this condition to be verified: 
  \begin{enumerate}[label={P.\arabic*}]
 	\item \label{item:SUPG_div_global_implies_stab_zero} either \eqref{eq:weakG3} implies that also the stabilization terms vanish;
  \item \label{item:cell_div_zero} or  $\mathsf{div}_h^E\mathbf{F} =0\;\forall \,E$ (cf. \eqref{eq:weakG2}), in which case  \eqref{eq:weakG3} is verified and  the stabilization terms vanish. 
  \end{enumerate}
Condition~\ref{item:SUPG_div_global_implies_stab_zero} is met only when the solution is constant, which does not include all possible equilibria. This  can be proven rigorously for linear systems,
and  is  summarized   by the following  results.

\begin{lemma}[Local and global second derivative operators] \label{lem:nosp} For any $m=1,\dots,d$:
$$
\sum_{\ell}     (D_m^{E_\ell^{m}})^{\mathrm{T}}  (M_m^{E_\ell^{m}})^{-1}   D_m^{E_\ell^{m}}     \ne
(D_m)^T (M_m)^{-1} D_m\;. 
$$
\begin{proof}
See \cite[Proposition 3]{barsukow2025structure}.
\end{proof}
\end{lemma}
Inspecting the entries  of \eqref{eq:su_local_2a}, Lemma~\ref{lem:nosp} shows   that, 
when   $\mathbf{J}\,\tau$ is constant   $\forall\, E_{i,j}\in\Omega$,    in the evaluation of second derivatives
the embedding of first derivative operators  only applies to the elemental entries, but is not true at the global  level.
The above result can be also generalized by including the $\mathbf{J}\,\tau$ term appearing in \eqref{eq:su_local_2a}.
We then have the following equivalence.

\begin{proposition}[Second  and first derivative kernel equivalence conditions]  \label{prop:equi} 
The following properties are equivalent in the $x^1$ direction
\begin{enumerate}
\item $ \sum_{E_{ij}}\Big\{ (D_1^{E_i^{1}})^{\mathrm{T}}  (M_1^{E_i^{1}})^{-1}      \otimes \id_2^{E_j^{2}} \Big\}\textsf{div}_h^{E_{ij}}\mathbf{v} = 0$   for any $\mathbf{v}\in\mathbb{R}^d $ such that $\textsf{div}_h\mathbf{v}=0$ ;
\item  $ \sum_{i}     (D_1^{E_i^{1}})^{\mathrm{T}}  (M_1^{E_i^{1}})^{-1}   D_1^{E_i^{1}}     = (D_1)^T (M_1)^{-1} D_1$ .
\end{enumerate}
and similarly for all other directions. 
\begin{proof}
See \cite{barsukow2025structure}, Proposition 2.
\end{proof}
\end{proposition} 
We can again inspect the entries of the stabilization \eqref{eq:su_local_2a}.  Already for linear systems, or when 
$\mathbf{J}\,\tau$ can be taken constant,  the equivalence states that for a member of the kernel of the divergence
to also nullify the stabilization, then the second derivatives must be evaluated using embedded first derivatives.

\emph{Combining the equivalence  of Proposition \ref{prop:equi} with Lemma \ref{lem:nosp}, we deduce that the SUPG method cannot be stationarity preserving
in the sense of Condition~\ref{cond:SP}.}\\

The only remaining possibility is \ref{item:cell_div_zero}, namely that $\textsf{div}^E_h \mathbf{F} =0$ should vanish for all elements.
This is a quite restrictive requirement.  Let us  for simplicity  consider $N_1=N_2=N$. Due to  the continuity of the approximation space, 
given  a vector $\mathbf{v}_h\in (V_h^K)^2$, the requirement $\textsf{div}^E_h \mathbf{v}=0\;\forall \,E$ is a system of size $(K+1)^2 N^2$, i.e.,
the total number of elements times the number of  degrees of freedom per element.
This system must be satisfied by  the total number of degrees of freedom describing  $\mathbf{v}_h$ given by  $2 ( N K +1 )^2 $. One can show that  $2 ( N K +1 )^2-1 $ of these constraints are linearly independent\footnote{We have  experimentally found   that up to $K=10$  there is only one relation which is not independent}.
Then, the only possibility  to have a stationary state is that the total number of unknowns should be at equal or larger than the number of constraints, namely 
$$
2 ( N K +1 )^2 \ge (K+1)^2 N^2-1 \approx (K+1)^2 N^2 \Rightarrow \sqrt{2}( NK+1) \gtrapprox N(K+1)   
$$
which for  sufficiently large $N$ is only possible if $K \gtrapprox 1+\sqrt{2} \approx 2.4$.
This means that for  $K = \{ 1,2\} $ the above constraints cannot be satisfied. 
Starting with $K=3$ 
in principle some solution could  be found.   The question is what would be the properties of such  discrete stationary state. 
	A possible way to  answer is to look into 
	families of vectorial  finite elements spaces that are compatible with a solenoidal condition, but not anymore in $V_h^K$. The best known family of elements providing the closest divergence conformal full vectorial approximation is the usual Raviart-Thomas $\mathbb{RT}_{K-1}$ element, defined by $\mathbb{RT}_{K-1}=\mathbb{P}_{K}\otimes d\mathbb{P}_{K-1}\oplus d\mathbb{P}_{K-1}\otimes\mathbb{P}_{K}$, where $d\mathbb P$ denotes polynomials which are discontinuous at the interfaces. Strictly speaking,   these spaces are not into the $(V_h^K)^2$ space relevant for this work, but we can consider them for the sake of argument
	as their restriction to an element does belong to the restriction of  $(V_h^K)^2$, which is $(\mathbb{Q}^K)^2$.
Note, however, that on these elements optimal interpolation error estimates are of order $h^{K}$ in $L^2$ norm (see e.g. \cite[Chapter 1]{brezzifortin}), 
which is less than the well known $h^{K+1}$ interpolation error estimates valid for non affine quadrilateral Lagrange elements (see e.g. \cite[Chapter 1]{eg04}).
Perhaps, for $K$ high enough, continuous conformal solutions could be found, but 
the approximation properties of classical conformal elements suggest that, should we be able to construct such approximation, 
we are likely pay the conformity with  null divergence with a reduction in interpolation accuracy  \cite{brezzifortin,Arbogast16}.
In this work, we consider another approach.

%

\section{Stationarity preserving  quadrature via global flux potentials }\label{sec:stationarityGF}

To construct stationarity preserving methods, we make use of the pre-processing of the flux vector discussed in  section~\ref{sec:multid_GF} 
 to modify the quadrature of the divergence operator both in the Galerkin as well as  in the stabilization term.
Details on the construction of the discrete equations are provided in this section.

\subsection{Approximation for the flux potentials}

We need to provide the explicit definition of the discrete counterpart of the potentials $(\mathcal{F}_1,\mathcal{F}_2)$ appearing in the 
stationarity preserving global flux formulation  \eqref{eq:GF0}. Following \cite{barsukow2025structure,barsukow2025stationaritypreservingnodalfinite}, 
we will assume that $(\mathcal{F}_1,\mathcal{F}_2)$ are approximated in $V_h^K(\Omega)$ as the physical fluxes, despite
being formally defined as derivatives of the latter. 
In particular, we set  (see \eqref{eq:Vh} and \eqref{eq:2DFEM_functions})
\begin{equation}\label{eq:bigF1}
\mathcal{F}_{1,h}(\mathbf{x}):= \sum_{i=0;j=0}^{N_1-1;N_2-1}\mathcal{F}_{1,h}^{E_{ij}}(\mathbf{x}) \;,\quad
\mathcal{F}_{2,h}(\mathbf{x}):=\sum_{i=0;j=0}^{N_1-1;N_2-1}\mathcal{F}_{2,h}^{E_{ij}}(\mathbf{x})
\end{equation}
with
\begin{equation}\label{eq:bigF2}
\mathcal{F}_{1,h}^{E_{ij}}(\mathbf{x}) =  \sum_{p=0;k=0}^{K;K} (\mathcal{F}_1)^{ij}_{pk}\, \varphi^1_{i,p}(x^1)\varphi^2_{j,k}(x^2)\;,\quad
\mathcal{F}_{2,h}^{E_{ij}}(\mathbf{x}) = \sum_{p=0;k=0}^{K;K}  (\mathcal{F}_2)^{ij}_{pk}\, \varphi^1_{i,p}(x^1)\varphi^2_{j,k}(x^2).
\end{equation}
The first key aspect of the new method is the computation of the nodal values $(\mathcal{F}_1)^{ij}_{pk}$ and $(\mathcal{F}_2)^{ij}_{pk}$.
This is achieved by applying their definition element-wise  and in a row-by-row / line-by-line fashion, namely we set $(\mathcal{F}_1)^{0j}_{0k} :=0$ for all $j,k$ and $(\mathcal{F}_2)^{i0}_{p0} :=0$ for all $i,p$, and then we compute the remaining values by integrating the fluxes $\mathbf{F}_1$ and $\mathbf{F}_2$ along the horizontal and vertical directions respectively iteratively on the cells, as follows
\begin{equation}\label{eq:bigF3}
\begin{split}
(\mathcal{F}_1)^{ij}_{pk} = (\mathcal{F}_1)^{ij}_{p0} + \int_{x^2_{0}}^{x^2_{k}} \mathbf{F}_{1,h}(t,x^1_{p},x^2) \,\mathrm{d}x^2,\\
(\mathcal{F}_2)^{ij}_{pk} = (\mathcal{F}_2)^{ij}_{0k} + \int_{x^1_{0}}^{x^1_{p}} \mathbf{F}_{2,h}(t,x^1,x^2_{k}) \,\mathrm{d}x^1.
\end{split}
\end{equation}
We use the properties of $ V_h^K$ a second time to impose that the point values obtained from \eqref{eq:bigF3} should be continuous when passing from one element to another.
So, we require that 
\begin{equation}\label{eq:bigF6}
(\mathcal{F}_1)^{ij}_{p0} =(\mathcal{F}_1)^{i j-1}_{pK}\;,\quad
(\mathcal{F}_2)^{ij}_{0k} =(\mathcal{F}_2)^{i-1 j}_{Kk}.
\end{equation}
To obtain the final discrete values, we now use twice the properties of the space $ V_h^K$. 
First,  we use  explicitly the ansatz   $\mathbf{F}_h\in V_h^K$, and  recast the above formulas as 
\begin{equation}\label{eq:bigF4}
\begin{split}
(\mathcal{F}_1)^{ij}_{pk} = (\mathcal{F}_1)^{ij}_{p0} + \sum_{\ell=0}^K\left(\int_{x^2_{0}}^{x^2_{k}} \varphi^2_\ell(x^2)  \,\mathrm{d}x^2\right) ( \mathrm{F}_1)^{ij}_{p\ell} = (\mathcal{F}_1)^{ij}_{p0}  +  
[(\id_1^{E_i^{1}} \otimes \mathcal{I}^{E^{2}_{j}}    )\mathrm{F}^{ij}_1]_{pk},\\
(\mathcal{F}_2)^{ij}_{pk} = (\mathcal{F}_2)^{ij}_{0k} + \sum_{\ell=0}^K \left(\int_{x^1_{0}}^{x^1_{p}} \varphi^1_\ell(x^1)   \,\mathrm{d}x^1\right) ( \mathrm{F}_2)^{ij}_{\ell k}= (\mathcal{F}_2)^{ij}_{0k}  + [ (\mathcal{I}^{E^{1}_{i}}\otimes \id_2^{E_j^{2}} ) \mathrm{F}^{ij}_2 ]_{pk}.
\end{split}
\end{equation}
On the reference element $[0,1]$, we define the  integration tables
\begin{equation}\label{eq:bigF5}
\mathcal{I}_{p k} := \int_{0}^{\xi_p} \varphi_k(\xi)\,d\xi 
\end{equation}
so that  we can formally define $\mathcal{I}_m^{E_{i}^{m}}= h_m \mathcal{I}$ for integration in the direction of  $x_m$,   $m = 1,\dots,d$.  
A very important remark is that \emph{by definition}  the table $\mathcal{I}$ coincides with the Butcher tableau of the fully implicit continuous 
collocation Runge-Kutta method associated to the collocation points $\{\xi_\ell\}_{\ell=0}^K$. When the Gauss-Lobatto points are used,
we recover again the LobattoIIIA method defined by \eqref{eq:stageLagrange}, in particular $\mathcal{I}_{m r}=\theta_r^m$, with $m,r = 0,\dots, K$ (see \cite[Chapter II.7]{Hairer1993}).
However, we prefer using separate notations, to differentiate integration in  space, using the $\mathcal{I}$ tables, from integration in time, using \eqref{eq:stageLagrange}.

With these definitions we have that \emph{by construction} the flux potentials verify the following property.
\begin{proposition}[Flux potentials and LobattoIIIA integration]\label{prop:Lobatto_integration}  Let $x^1_{\pi}$ and $x^2_{\kappa}$ be two fixed horizontal and vertical
coordinates. The nodal values of the flux potentials $(\mathcal{F}_1,\mathcal{F}_2)$ obtained from
\eqref{eq:bigF6}-\eqref{eq:bigF4}-\eqref{eq:bigF5} are equivalent to the integration of the ODEs
\begin{equation}\label{eq:bigF7}
\dfrac{d\mathcal{F}_{1,h}(t^n,x^1_{\pi},x^2)}{dx^2} = \mathbf{F}_1(t^n,x^1_{\pi},x^2) \;,\quad
\dfrac{d\mathcal{F}_{2,h}(t^n,x_{1},x^2_{\kappa})}{dx^1} = \mathbf{F}_2(t^n,x_{1},x^2_{\kappa})
\end{equation}
with the  LobattoIIIA collocation method (see \cite[Chapter II.7]{Hairer1993}).
\end{proposition}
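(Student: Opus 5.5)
The argument is a direct comparison between the recursion \eqref{eq:bigF6}--\eqref{eq:bigF4} and the stage equations of the LobattoIIIA collocation method, applied line by line. Fix the vertical line $x^1=x^1_\pi$; the horizontal case with $x^2=x^2_\kappa$ is identical after swapping the roles of the directions. Along this line the right-hand side $\mathbf{F}_{1,h}(t^n,x^1_\pi,\cdot)$ does not depend on the unknown $\mathcal{F}_{1,h}$. The ODE \eqref{eq:bigF7} is therefore a pure quadrature problem $y'(x^2)=f(x^2)$, with $f$ the restriction of $\mathbf{F}_{1,h}\in V_h^K$ to the line. On each $E^2_j$ this restriction is the degree-$K$ polynomial $f=\sum_\ell (\mathrm{F}_1)^{ij}_{p\ell}\varphi^2_{j,\ell}$. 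Take the integration grid to be the partition $\{x^2_j\}$ with step $h_2$, and take the stages to be the Gauss--Lobatto nodes $x^2_{j,k}=x^2_j+h_2\xi_k$.

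First I will write out one LobattoIIIA step from $x^2_j$ to $x^2_{j+1}$ for a general ODE $y'=f(x,y)$. The stage values are
\begin{equation*}
Y_k=y_j+h_2\sum_{\ell=0}^K a_{k\ell}\, f(x^2_{j,\ell},Y_\ell),\qquad k=0,\dots,K,
\end{equation*}
with $a_{k\ell}=\int_0^{\xi_k}\varphi_\ell(\xi)\,d\xi=\mathcal{I}_{k\ell}$. This is the definition of the Butcher tableau of a continuous collocation method, as recalled before the statement via $\mathcal{I}_{mr}=\theta^m_r$. The new step value is $y_{j+1}=Y_K$, because $x^2_{j,K}=x^2_{j+1}$ and the Lobatto method is stiffly accurate.

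Next I will specialize to $f$ independent of $y$, so the stage system becomes explicit. Identify $Y_k\equiv(\mathcal{F}_1)^{ij}_{pk}$ and $y_j\equiv(\mathcal{F}_1)^{ij}_{p0}$. Then the stage equation is exactly the first line of \eqref{eq:bigF4}, namely $(\mathcal{F}_1)^{ij}_{pk}=(\mathcal{F}_1)^{ij}_{p0}+[(\id_1^{E^1_i}\otimes\mathcal{I}^{E^2_j})\mathrm{F}_1^{ij}]_{pk}$, using $\mathcal{I}^{E^2_j}=h_2\mathcal{I}$ from \eqref{eq:bigF5}. The continuity requirement \eqref{eq:bigF6}, $(\mathcal{F}_1)^{ij}_{p0}=(\mathcal{F}_1)^{i,j-1}_{pK}$, is precisely the rule that the next step starts from the last stage, $y_j=Y_K$ of the previous step. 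The initialization $(\mathcal{F}_1)^{0j}_{0k}$ set to zero, which I read as zero at the bottom boundary of each line, supplies the initial datum $y_0=0$. Induction on $j$ then shows that every nodal value of $\mathcal{F}_{1,h}$ along the line coincides with the corresponding LobattoIIIA stage value. The same identification with $\mathbf{F}_2$ and $\mathcal{I}^{E^1_i}$ handles $\mathcal{F}_{2,h}$.

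The only delicate point is bookkeeping rather than analysis. I must check that the node $x^1_\pi$, which is shared between adjacent elements $E^1_{i-1}$ and $E^1_i$ when $p\in\{0,K\}$, gives the same values from both elements. This holds because $\mathbf{F}_{1,h}$ is continuous, so both elements integrate the same univariate data with the same recursion. I also need to interpret the boundary indexing in the initialization consistently.

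Finally I will remark that the collocation polynomial $u$ of degree $K+1$ satisfies $u'=f$ exactly, since $f$ has degree $K$. Hence the stage values are the exact integrals of $\mathbf{F}_{1,h}$, which confirms that the integral form \eqref{eq:bigF3} and the ODE form agree.
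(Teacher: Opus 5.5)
Your proposal is correct and follows the paper's argument, which the paper gives simply as holding ``by construction'': the table $\mathcal{I}$ of \eqref{eq:bigF5} is the LobattoIIIA Butcher tableau, so \eqref{eq:bigF4} are exactly the stage equations, and \eqref{eq:bigF6} is the rule that each step starts from the last stage of the previous one. Your extra bookkeeping fills in details the paper leaves implicit: you read the zero initialization as the bottom (resp.\ left) boundary of each line, which is what the recursion actually requires, and you check that values at nodes shared across $x^1$-interfaces agree.
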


Finally, concerning the source term, on a given element $E_{ij}$ we proceed similarly and   set
\begin{equation}\label{eq:bigF8}
\begin{split}
\mathcal{S}^{ij}_{pk} = \mathcal{S}^{ij}_{00}  - \sum_{m,  \ell=0}^K 
\left(\int_{x^1_{0}}^{x^1_{p}} \varphi^1_m(x^1)  \,\mathrm{d}x^1 \right) 
\left(\int_{x^2_{0}}^{x^2_{k}} \varphi^2_\ell(x^2)  \,dx^1\right) 
( \mathrm{S}^{ij})_{m \ell} = \mathcal{S}^{ij}_{00} -[(\mathcal{I}_1^{E_i^{1}} \otimes \mathcal{I}_2^{E_j^{2}}    )\mathrm{S}^{ij}]_{pk}.
\end{split}
\end{equation}

\begin{remark}
With the construction \eqref{eq:bigF6}-\eqref{eq:bigF4}-\eqref{eq:bigF5}, $\mathcal{F}_{1,h}$ and $\mathcal{F}_{2,h}$ are well defined functions in $V_h^K$, contrary to what would have happened by simply integrating the fluxes $\mathbf{F}_1$ in $x^2$, 
which would have lead to a function $\mathcal{F}_1 \in \mathbb P^{K} \otimes \mathbb P^{K+1}$ in each element and similarly to a $\mathcal{F}_2 \in \mathbb P^{K+1} \otimes \mathbb P^{K}$. 
This will be the key of the success of the method, as the resulting integrated fluxes (and source) $\mathcal{F}_{1,h}, \mathcal{F}_{2,h}, \mathcal{S}_h	 \in V_h^K$ can be balanced degree of freedom by degree of freedom consistently, 
and, as a result, locally we have for the flux vector $(\partial_y \mathcal{F}_{1,h}, \partial_x\mathcal{F}_{2,h}) \in  \mathbb P^{K} \otimes \mathbb dP^{K-1} \oplus  \mathbb dP^{K-1} \otimes \mathbb P^{K} $,
which is  essentially a Raviart-Thomas space. This local projection allows to satisfy the divergence conformal character of the method, with a classical collocated space.
\end{remark}

\subsection{Modification of the Galerkin variational form}

In this section, we revise the weak Galerkin form \eqref{eq:weakG0} in light of  the quadrature ansatz 
\eqref{eq:GF9}, complemented with  the discrete definitions  \eqref{eq:GF0a} and \eqref{eq:GF10}. We will focus on the modification
of the  divergence and source term, since no modification is done on the treatment of  the time derivative.
We start with the divergence operator,  given by \cite{barsukow2025structure,barsukow2025stationaritypreservingnodalfinite}
\begin{equation}\label{eq:GFdiv1}
\int\limits_\Omega\varphi_{\alpha}\left( \partial_{x^1} \partial_{x^2} \mathcal{F}_{1,h} + \partial_{x^2}\partial_{x^1} \mathcal{F}_{2,h}\right)\,d\mathbf{x}\equiv 
\left[( D_1\otimes D_2) \mathcal{F}_1 \right]_{\alpha} +\left[ (D_1\otimes D_2 )\mathcal{F}_2 \right]_{\alpha} = \left[( D_1\otimes D_2) (\mathcal{F}_1 +\mathcal{F}_2)\right]_{\alpha}  \,,
\end{equation}
where by abuse of notation $\mathcal{F}_1$ and $\mathcal{F}_2$ above denote the arrays with the nodal values of the potentials.
Several properties of the above discrete divergence can be revealed by  writing  the entries associated to each single element,  as in \eqref{eq:localDx1}, and using the explicit  relation between $\mathcal{F}_m$ and $\mathrm{F}_m$.
These properties are summarized in the     following result.
\begin{proposition}[Objectivity] The discrete divergence \eqref{eq:GFdiv1} can be equivalently written as 
\begin{equation}\label{eq:GFdiv2}
\int\limits_\Omega\varphi_{\alpha}\left( \partial_{x^1} \partial_{x^2} \mathcal{F}_{1,h} + \partial_{x^2}\partial_{x^1} \mathcal{F}_{2,h}\right)\,d\mathbf{x}\equiv 
\left[\sum_{E_{ij}\in\Omega} ( D_1^{E_i^{1}}\otimes D_2^{E_j^{2}} \mathcal{I}^{E_j^{2}}_2 ) \mathrm{F}_1^{E_{ij}} +  ( D_1^{E_i^{1}}   \mathcal{I}^{E_j^{1}}_1 \otimes D_2^{E_j^{2}} ) \mathrm{F}_2^{E_{ij}}  \right]_{\alpha}
\end{equation}
and the resulting discrete divergence
\begin{enumerate}
\item  is independent of the integration constants  $(\mathcal{F}_1)^{ij}_{p0}$ and $(\mathcal{F}_2)^{ij}_{0k}$;
\item  is invariant w.r.t. the  direction of integration in the definition of the potentials. 
\end{enumerate}
\begin{proof}
Expression  \eqref{eq:GFdiv2} can be obtained  by formally replacing the local definitions of the integrated potentials \eqref{eq:bigF4}.
In doing so, property 1.  is  given by showing that the bilinear interpolation of $(\mathcal{F}_1)^{ij}_{p0}$ and $(\mathcal{F}_2)^{ij}_{0k}$ leads to univariate polynomials,
thus disappearing when evaluating the mixed derivative  on the left hand side   \eqref{eq:GFdiv2}. Property 2. is a consequence of the relation
$$
\int_{x^2_{0}}^{x^2_{k}}\mathbf{F}_1\,\mathrm{d}x^2 -   \int_{x^2_{K}}^{x^2_{k}}\mathbf{F}_1\,\mathrm{d}x^2  = \int_{x^2_{0}}^{x^2_{K}}\mathbf{F}_1\,\mathrm{d}x^2.
$$
Since the right hand side  is independent on $k$, its interpolation is constant so the derivative  of the potentials on the left hand side, each using integration in opposite directions,
provides the same value.  For  all remaining details we refer to \cite[Proposition 6 and Proposition 9]{barsukow2025structure}, as well as to \cite[Section 4.1]{barsukow2025stationaritypreservingnodalfinite}.
\end{proof}
\end{proposition}
As done before, we can introduce  the global and local divergence operator arrays (denoted here with uppercase to distinguish them from the standard FEM formulation)
\begin{equation}\label{eq:GFdiv3}\begin{aligned}
\textsf{DIV}_h \mathbf{F}:=  & ( D_1\otimes D_2) \mathcal{F}_1   + (D_1\otimes D_2 )\mathcal{F}_2  \;,\\
\textsf{DIV}_h^{E_{ij}}  \mathbf{F}:=  &( D_1^{E_i^{1}}\otimes D_2^{E_j^{2}} \mathcal{I}^{E_j^{2}}_2 ) \mathrm{F}_1^{E_{ij}} +  ( D_1^{E_i^{1}}   \mathcal{I}^{E_j^{1}}_1 \otimes D_2^{E_j^{2}} ) \mathrm{F}_2^{E_{ij}}  .
\end{aligned}
\end{equation}
Very interestingly, the only difference between $ \textsf{DIV}_h^{E_{ij}}  \mathbf{F}$ and $\textsf{div}_h^{E_{ij}}  \mathbf{F}$ in \eqref{eq:weakG2} is that 
the mass matrices $M_m^{E_i^{m}}$ have been replaced by the operator $D_m^{E_i^{m}} \mathcal{I}^{E_i^{m}}_m$, still consistent with a mass matrix. 

Finally, in presence of source terms, \eqref{eq:GFdiv1} needs to be replaced by 
\begin{equation}\label{eq:GFdiv4}
\int\limits_\Omega\varphi_{\alpha}\left( \partial_{x^1} \partial_{x^2} \mathcal{F}_{1,h} + \partial_{x^2}\partial_{x^1} \mathcal{F}_{2,h} + \partial_{x^1} \partial_{x^2}\mathcal{S}_h\right)\,d\mathbf{x}\equiv 
\left[( D_1\otimes D_2) \mathcal{F}_1 \right]_{\alpha} +\left[ (D_1\otimes D_2 )\mathcal{F}_2 \right]_{\alpha} +\left[ (D_1\otimes D_2 )\mathcal{S} \right]_{\alpha} \,.
\end{equation}
In this case, using also \eqref{eq:bigF8}, one is led to introduce the arrays of \emph{residuals} 
\begin{equation}\label{eq:GFdiv5}\begin{aligned}
\mathsf{R}_h  :=  &\; ( D_1\otimes D_2) \mathcal{F}_1   + (D_1\otimes D_2 )\mathcal{F}_2 + (D_1\otimes D_2 )\mathcal{S}   \;,\\
\mathsf{R}_h^{E_{ij}}   :=  &\;\textsf{DIV}_h^{E_{ij}}  \mathbf{F}-   ( D_1^{E_i^{1}}  \mathcal{I}^{E_j^{1}}_1\otimes D_2^{E_j^{2}} \mathcal{I}^{E_j^{2}}_2 ) \mathrm{S}.
\end{aligned}
\end{equation}

\subsection{SU stabilization in GFQ formulation}

We will proceed as before adding to the variational form the stabilization term arising from the model equation \eqref{eq:mod_eq1}. 
In this case, we  use the integrated potentials to evaluate   the divergence operator on the right hand side. 
The resulting scheme reads 
\begin{equation} \label{eq:weakG1-SUGF}
\mathsf{M}\dfrac{d}{dt}W + \mathsf{R}_h  + \mathsf{ST}_h(\mathrm{F}_1,\mathrm{F}_2,\mathrm{S}) =0,
\end{equation}
where $\mathsf{M}$ is exactly the same non-diagonal mass matrix appearing in \eqref{eq:weakG1-SU}, the Galerkin residual  $\mathsf{R}_h$ is defined by \eqref{eq:GFdiv5},
and now the stabilization term  (denoted here with uppercase) is evaluated as follows 
 \begin{equation} \label{eq:weakG2-SUGF}
[ \mathsf{ST}_h(\mathrm{F}_1,\mathrm{F}_2,\mathrm{S})]_{\alpha}  :=  \sum_{E_{ij}\in\Omega}\int\limits_{E_{ij}} \nabla \varphi_{\alpha} \cdot\mathbf{J}\tau \left( \partial_{x^1}\partial_{x^2}\mathcal{F}_{1,h}
+\partial_{x^2}\partial_{x^1}\mathcal{F}_{2,h} + \partial_{x^1}\partial_{x^2}\mathcal{S}_{h} \right)\,d\mathbf{x}.
 \end{equation}
Replacing the definitions of the potentials \eqref{eq:bigF4} and \eqref{eq:bigF8}, and following the same developments of Section~\ref{sec:stabilization}, we can show that  the streamline upwind term can be written as 
 \begin{equation} \label{eq:weakG3-SUGF}
\begin{aligned}
\left[ \mathsf{ST}_h(\mathrm{F}_1,\mathrm{F}_2,\mathrm{S})\right]_{\alpha}  := \Bigg[ \sum_{E_{ij}\in\Omega} 
&\left\{ \left( (D_1^{E_i^{1}})^{\mathrm{T}}  (M_1^{E_i^{1}})^{-1} \otimes \id_2^{E_j^{2}} \right)   \textrm{diag}(\mathbf{J}_1 \,\tau)  \right\} 
\mathsf{R}_h^{E_{ij}}\\
+&\left\{ \left( \id_1^{E_i^{1}}  \otimes (D_2^{E_j^{2}})^{\mathrm{T}}  (M_2^{E_j^{2}})^{-1}   \right)   \textrm{diag}(\mathbf{J}_2 \,\tau)  \right\} 
\mathsf{R}_h^{E_{ij}}
\Bigg] _{\alpha}
\end{aligned}
 \end{equation}
with $\mathsf{R}_h^{E_{ij}}$ the array of local residuals defined in \eqref{eq:GFdiv5}.  As for the Galerkin term,   the main difference compared to the $\mathsf{st}_h$ term appearing in
\eqref{eq:weakG1-SU} is related to the modification of the mass matrices appearing in the $\textsf{DIV}_h^{E_{ij}}  \mathbf{F}$  term  present in 
$\mathsf{R}_h^{E_{ij}}$ (cf. equations \eqref{eq:GFdiv5} and \eqref{eq:GFdiv3}).  

We will denote the schemes defined by \eqref{eq:weakG1-SUGF} and \eqref{eq:weakG2-SUGF} as \emph{SUPG-GFQ} methods.

\subsection{Time evolution using DeC}

The integration in time of \eqref{eq:weakG1-SUGF}  is performed following \emph{exactly} the steps underlying the Defect Correction method   discussed in Section~\ref{sec:DeC}. The only difference is that now
we set 
\begin{equation}
\begin{split}
	\mathcal{L}^{H,m}( \mathsf{W}  ) :=   &\mathsf{M}   (\mathrm{W}^m -  \mathrm{W}^0 )
	+ \Delta t \sum_{r=0}^M \theta_r^m M^{\textsf{SU}}(\mathrm{W}^r) \dfrac{d\mathrm{W}(t^r)}{dt} 
	+ \Delta t \sum_{r=0}^M \theta_r^m \{ \textsf{R}_h(\mathrm{W}^r)    + \mathsf{ST}_h(\mathrm{W}^r) \},
	 \end{split}
\end{equation}
for the  high order  operator, while the low order operator is defined as 
\begin{equation}
\begin{split}
	\mathcal{L}^{L,m}(\mathsf{W}  ) :=   \mathsf{M}   (\mathrm{W}^m -  \mathrm{W}^0 )
	+& \Delta t \ \{ \textsf{R}_h(\mathrm{W}^0)    + \mathsf{ST}_h(\mathrm{W}^0)\}.  
	 \end{split}
\end{equation}
This results in the   explicit multi-stage iterations for all $k=1,\dots, \kappa$ and $m=1,\dots,M$:
\begin{equation}
	\label{eq:DeC-GF}
	\begin{split}
	     \mathrm{W}^{m,(k)}  =            \mathrm{W}^n
	- &\Delta t \sum_{r=0}^M \theta_r^m \mathsf{M}^{-1}  M^{\textsf{SU}}(\mathrm{W}^{r,(k-1)}) \dfrac{d\mathrm{W^{(k-1)}}(t^r)}{dt} \\
	-& \Delta t \sum_{r=0}^M \theta_r^m \mathsf{M}^{-1}  \{ \textsf{R}_h(\mathrm{W}^{r,(k-1)})    + \mathsf{ST}_h(\mathrm{W}^{r,(k-1)}) \}
	 \end{split}
\end{equation}
 with $\mathsf{M}$ diagonal. As already remarked, despite of the non-diagonal mass matrix contributions of the SU term,
this method has the structure of an explicit multi-stage Runge-Kutta scheme.

\section{Properties  of the  methods}\label{sec:properties}
In the following section, we summarize the main properties of the presented method.

\subsection{Local conservation and Lax Wendroff estimates}\label{sec:local_conservation}

The   preprocessing introduced leads to a locally discontinuous approximation of the flux components, evaluated as derivatives of the potentials.
 In this section, we   show that, despite this, and without the use of any numerical flux, the new method 
 is locally conservative, and  verifies  a Lax-Wendroff theorem. 
We consider first  the case in which there is no source term, then give some simple arguments to include   uniformly bounded sources. 
Similar results are known for the standard SUPG, see    e.g.   \cite{AR:17,Abgrall2022,amr2025}.

The main tool we will use is the residual distribution framework \cite{AR:17}, in which the scheme will be recast through a fluctuation form. Following  then the   general formulation analyzed in the review paper \cite{amr2025},
we will prove that the local fluctuations sum up to a contour integral of the flux. 

Both schemes, the SUPG \eqref{eq:DeC2} and the SUPG-GFQ \eqref{eq:DeC-GF}, can be written for a collocation node $\alpha$ as 
\begin{equation}\label{eq:DeC-RD}
|\omega_{\alpha} |\mathrm{W}^{m,(k)}_{\alpha} = |\omega_{\alpha}| \mathrm{W}^n_{\alpha} - \Delta t \sum_{r=0}^M \sum_{E_{ij}\in \Omega} [\Phi_{\alpha}^{ij}]^{r,(k-1)}
\end{equation}
having set $|\omega_{\alpha}|=\mathsf{M}_{\alpha\alpha}$, which is the area of a nodal cell of side lengths  proportional to Gauss-Lobatto   quadrature  weights. 
In particular, for the stationarity preserving method we  have
\begin{equation}\label{eq:fluct_su_gf}
\Phi_{\alpha}^{ij} = \int_{E_{ij}} \Big( \varphi_{\alpha} ( \partial_{x^1} \partial_{x^2} \mathcal{F}_{1,h} + \partial_{x^2}\partial_{x^1} \mathcal{F}_{2,h}) + \nabla\varphi_{\alpha} \cdot\mathbf{J}\,\tau\,\dfrac{d W_h }{dt} +   \nabla\varphi_{\alpha} \cdot\mathbf{J}\,\tau\, ( \partial_{x^1} \partial_{x^2} \mathcal{F}_{1,h} + \partial_{x^2}\partial_{x^1} \mathcal{F}_{2,h})\Big)d\mathbf{x} .
\end{equation}
Following \cite{amr2025}, we now need to show that 
\begin{equation}\label{eq:local_conservation}
\sum_{\alpha\in E_{ij}} \Phi_{\alpha}^{ij} = \oint_{\partial E_{ij}} \! \!\mathbf{F}_h\cdot \hat{n}\, dS.
\end{equation}
We first note that for any function $f$ the interpolation properties of the shape functions imply  
\begin{equation}\label{eq:sum_nabla_varphi}
\sum_{\alpha\in E_{ij}}  \varphi_{\alpha}(\mathbf{x}) \equiv 1\;\forall\, \mathbf{x} \Rightarrow \sum_{\alpha\in E_{ij}} \int_{E_{ij}}  \nabla\varphi_{\alpha} \, f\,d\mathbf{x} = \bigintss_{E_{ij}}  \nabla \left\{\sum_{\alpha\in E_{ij}}  \varphi_{\alpha} \right\}   f\,d\mathbf{x}  =0.
\end{equation}
So, the stabilization terms vanish in \eqref{eq:local_conservation}. Concerning the remaining term we proceed as follows.  First, we note that in \eqref{eq:fluct_su_gf} we can remove from  $\mathcal{F}_{1,h}$ and $\mathcal{F}_{2,h}$  any univariate polynomial due to the presence of the mixed derivatives.
 Recalling \eqref{eq:GF4}, we can use \eqref{eq:sum_nabla_varphi} to readily show that
 \begin{equation}\label{eq:Phi_gf}
 \begin{aligned}
  \partial_{x^1} \partial_{x^2} \mathcal{F}_{1,h} + \partial_{x^2}\partial_{x^1} \mathcal{F}_{2,h} =   & \; \partial_{x^1} \partial_{x^2}\Psi^{ij}_h\;, \text{ for }\mathbf{x} \in E_{ij},  \\
[  \Psi^{ij}_h]^{ij}_{pq} := &\;[\mathcal{F}_{1,h} - \mathcal{F}_{1,h}(x^1_{i,0},x^2) + \mathcal{F}_{2,h} - \mathcal{F}_{2,h}(x^1, x^2_{j,0})]_{pq} =  \int_{x^2_{j,0}}^{x^2_{j,q}} \int_{x^1_{i,0}}^{x^1_{i,p}} \nabla\cdot\mathbf{F}_h\,d\mathbf{x}
  \end{aligned}
 \end{equation}
 having used Schwarz's theorem for the polynomials  $\mathcal{F}_{1,h}$ and $\mathcal{F}_{2,h}$, and where the last  equality is readily shown using the definition of the potentials  \eqref{eq:bigF2}  and \eqref{eq:bigF3},
and noting that 
$$
[\mathcal{F}_{1,h} - \mathcal{F}_{1,h}(x^1_{i,0},x^2)]_{pq}=
 \int_{x^2_{j,0}}^{x^2_{j,q}}[  \mathbf{F}_h(x^1_{i,p},x^2) -\mathbf{F}_h(x^1_{i,0},x^2)]\,\mathrm{d}x^2 =  \int_{x^2_{j,0}}^{x^2_{j,q}}\int_{x^1_{i,0}}^{x^1_{i,p}} \partial_x \mathbf{F}_h(x^1,x^2) \,d\mathbf{x},
$$
and similarly for the   term $ \mathcal{F}_{2,h} - \mathcal{F}_{2,h}(x^1, x^2_{j,0})$. Note that all integrals above are exact by construction, including in \eqref{eq:Phi_gf}. In particular,  the array $\Psi^{ij}_h$ contains the  integrals of the   divergence of the  interpolated flux over the sub-cells 
$\{[x^1_{i,0},x^1_{i,p}] \times [ x^2_{j,0},x^2_{j,q} ]\}_{p,q=0,\dots,K}$. Using again $\sum_\alpha\varphi_\alpha=1$, the exactness of the Gauss-Lobatto formulas, and \eqref{eq:Phi_gf}, we can now write:
\begin{equation}\label{eq:proofPhi}
\sum_{\alpha\in E_{ij}} \Phi_{\alpha}^{ij}  =   \sum_{\alpha\in E_{ij}}\int_{E_{ij}}  \partial_{x^1} \partial_{x^2}\Psi^{ij}_h\ \,d\mathbf{x} = [  \Psi^{ij}_h]_{KK} -  [  \Psi^{ij}_h]_{0K} - (  [\Psi^{ij}_h]_{K0} -  [\Psi^{ij}_h]_{00}) = [  \Psi^{ij}_h]_{KK}.
\end{equation}
By definition $[  \Psi^{ij}_h]_{KK}$ is the integral of the  divergence of  the interpolated flux over the whole element  $E_{ij}$. As before, the  exactness of the Gauss-Lobatto formulas allow to apply Gauss' theorem, and lead to \eqref{eq:local_conservation}.  Using this fact and  the continuity of the flux approximation $\mathbf{F}_h$, and under some relatively classical continuity and boundedness assumptions,
the scheme  verifies  a Lax-Wendroff result  (see \cite[Section \S4.1]{amr2025}).  Moreover, one can exhibit the existence of consistent numerical fluxes  associated to the nodal cell  $\omega_\alpha$,
which depend on  the $\Phi_{\alpha}^{ij}$.
The interested reader can refer to \cite{amr2025,AR:17,abgrall2020notion,Abgrall2022,barsukow2025genuinely,GRD26} for more  on these aspects.

Concerning the source term,    for bounded data and bounded source (so no discontinuous potentials), we can proceed classically (see e.g. \cite{amr2025,SHI20183}),
and study the pseudo weak form of the method. Note that even with these hypotheses   the use of  the $\partial_{x^1 x^2}  \mathcal{S}_h$  term in the discretisation makes it less clear that we indeed have consistency with weak solutions.
To clarify this,   we observe that under boundedness assumptions of $\mathbf{S}_h$, we also have 
 boundedness of $\partial_{x^1x^2}\mathcal{S}_{h}$. In particular, since $\partial_{x^1}\varphi^1_{p}(x^1) \partial_{x^2}\varphi^2_{q}(x^1) \sim   h^{-2}$, we can readily show that
$$
\left\lVert \partial_{x^1x^2}\mathcal{S}_{h}\right\rVert= \left\lVert \sum_{E_{ij}\in\Omega} \sum_{p,q=0}^K\partial_{x^1}\varphi^1_{p}(x^1) \partial_{x^2}\varphi^2_{q}(x^1) \int_{x^1_{i,0}}^{x^1_{i,p}}  \int_{x^2_{j,0}}^{x^2_{j,q}}  \mathbf{S}_h\,d\mathbf{x} \right\rVert
\le (C_1 h^{-1}) (C_2 h^{-1}) h^2 \left\lVert \mathbf{S}_h   \right\rVert=C_1 C_2 \left\lVert \mathbf{S}_h   \right\rVert
$$
for some bounded constants  $C_1$ and $C_2$.  We now  study, for any given smooth    function $\psi$, the quantity 
 $$
\sum_{\alpha}\psi_{\alpha}\left\{
\sum_{E_{ij}\in\Omega} \int_{E_{ij}}  \left(\varphi_{\alpha} \partial_{x^1x^2}\mathcal{S}_{h}+
   \nabla\varphi_{\alpha} \cdot\mathbf{J}\,\tau\, \partial_{x^1x^2}\mathcal{S}_{h}\right)\,d\mathbf{x} \right\}.
$$
where $\psi_{\alpha}=\psi(\mathbf{x}_{\alpha})$. We can readily show for the stabilization term that
$$
\sum_{\alpha}\psi_{\alpha}\sum_{E_{ij}\in\Omega} \int_{E_{ij}} \nabla\varphi_{\alpha} \cdot\mathbf{J}\,\tau\, \partial_{x^1x^2}\mathcal{S}_{h}\,d\mathbf{x}
= \sum_{E_{ij}\in\Omega} \sum_{\alpha} (\psi_{\alpha} - \bar\psi_{E_{ij}}) \int_{E_{ij}} \nabla\varphi_{\alpha} \cdot\mathbf{J}\,\tau\,  \partial_{x^1x^2}\mathcal{S}_{h} \,d\mathbf{x}=\mathcal{O}(h)
$$
with $\bar\psi_{E_{ij}}$ the average of $\psi$ over $E_{ij}$. The last equality  uses the smoothness of $\psi$ implying  $\psi_{\alpha} - \bar\psi_{E_{ij}}   =\mathcal{O}(h) $,
the  boundedness of $\mathbf{S}_h$ and $\partial_{x^1x^2}\mathcal{S}_{h}$,   and the fact that $\nabla\varphi_{\alpha} \cdot\mathbf{J}\,\tau$ is also bounded by definition of $\tau$.
Note that the identity $\sum_{\alpha}\nabla\varphi_\alpha=0$ has been used  to remove the $\bar\psi_{E_{ij}} $ term.
Now, we consider  
$$
\begin{aligned}
&\sum_{\alpha}\sum_{E_{ij}\in\Omega} \int_{E_{ij}} \psi_{\alpha}\varphi_{\alpha} \partial_{x^1x^2}\mathcal{S}_{h}\,d\mathbf{x} =
\sum_{E_{ij}\in\Omega} \int_{E_{ij}} \bar\psi_{E_{ij}} \partial_{x^1x^2}\mathcal{S}_{h}\,d\mathbf{x} 
+  \sum_{E_{ij}\in\Omega} \int_{E_{ij}} (\psi_h - \bar\psi_{E_{ij}}   ) \partial_{x^1x^2}\mathcal{S}_{h}\,d\mathbf{x}\\
= & \sum_{E_{ij}\in\Omega} \bar\psi_{E_{ij}} \int_{E_{ij}}\mathbf{S}_h\,\mathrm{d}x +   \sum_{E_{ij}\in\Omega} \int_{E_{ij}} (\psi_h - \bar\psi_{E_{ij}})   \partial_{x^1x^2}\mathcal{S}_{h}\,d\mathbf{x}\\
= &    \int_{\Omega} \psi_ h\mathbf{S}_h\,\mathrm{d}x 
 + \sum_{E_{ij}\in\Omega} \int_{E_{ij}} (\psi_h - \bar\psi_{E_{ij}})   (\partial_{x^1x^2}\mathcal{S}_{h}-\mathbf{S}_h)\,d\mathbf{x}   =
 \int_{\Omega} \psi_ h\mathbf{S}_h\,\mathrm{d}x  + \mathcal{O}(h).
\end{aligned}
$$
The first term in the second line is obtained proceeding as in  \eqref{eq:proofPhi}, and the last  estimate follows from the boundedness of $\partial_{x^1x^2}\mathcal{S}_{h}-\mathbf{S}_h$,
and the smoothness of $\psi$. The rest of the analysis is classical, see     \cite{herbin23,amr2025}.

\subsection{Discrete kernel and stationarity preservation}
We show that the new method has a rich set of discrete stationary states. 
 The main result is   the following.

\begin{proposition}\label{prop:equilibria}
For any arbitrary  univariate functions $\mathbf{f}(x^1)$ and $\mathbf{g}(x^2)$, with discrete interpolated counterparts $\mathsf{f}$ and $\mathsf{g}$,
the general class of functions $\mathbf{W}_h$ such that their integrated fluxes and source can be written as 
\begin{equation}\label{eq:equilibria}
	\begin{split}
	 \mathcal{F}_{1,h}(\mathbf{W}_h)(\mathbf{x}) + \mathcal{F}_{2,h}(\mathbf{W}_h)(\mathbf{x}) + \mathcal{S}_h(\mathbf{W}_h)(\mathbf{x}) = &\; \mathbf{f}_h(x^1)+\mathbf{g}_h(x^2) , \text { or discretely}\\[5pt]
		(\mathcal{F}_{1})^{ij}_{pk} + (\mathcal{F}_{2})^{ij}_{pk} + \mathcal{S}^{ij}_{pk} =& \; \mathsf{f}^i_p + \mathsf{g}^j_k,
	\end{split}
\end{equation} 
describe discrete stationary solutions of \eqref{eq:weakG1-SU} and \eqref{eq:DeC-GF}.
\end{proposition}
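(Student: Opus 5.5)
The plan is to show that, under the ansatz \eqref{eq:equilibria}, both the Galerkin residual $\mathsf{R}_h$ and the stabilization term $\mathsf{ST}_h$ of the SUPG-GFQ scheme vanish identically. Then $\mathrm{W}$ constant in time is a fixed point of \eqref{eq:weakG1-SUGF}. Because $d\mathrm{W}/dt=0$ makes the $M^{\textsf{SU}}$ contribution vanish as well, it is also a fixed point of every DeC stage iteration \eqref{eq:DeC-GF}. The argument rests on one pointwise fact: the mixed derivative $\partial_{x^1}\partial_{x^2}$ annihilates any function of the form $\mathbf{f}_h(x^1)+\mathbf{g}_h(x^2)$.

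\textbf{Step 1 (nodal to functional form).} Within each element $E_{ij}$, the nodal identity $(\mathcal{F}_1)^{ij}_{pk}+(\mathcal{F}_2)^{ij}_{pk}+\mathcal{S}^{ij}_{pk}=\mathsf{f}^i_p+\mathsf{g}^j_k$ is interpolated with the tensor basis $\varphi^1_{i,p}\varphi^2_{j,k}$. Using $\sum_k\varphi^2_{j,k}\equiv1$ and $\sum_p\varphi^1_{i,p}\equiv 1$, the right-hand side interpolates to $\mathbf{f}_h(x^1)+\mathbf{g}_h(x^2)$. Here $\mathbf{f}_h$ and $\mathbf{g}_h$ are univariate piecewise polynomials, continuous by \eqref{eq:bigF6}. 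Hence, on each $E_{ij}$,
\[
\mathcal{F}_{1,h}+\mathcal{F}_{2,h}+\mathcal{S}_h=\mathbf{f}_h(x^1)+\mathbf{g}_h(x^2).
\]
Since these are polynomials on each element, Schwarz's theorem applies (as in \eqref{eq:Phi_gf}), and
\[
\partial_{x^1}\partial_{x^2}\mathcal{F}_{1,h}+\partial_{x^2}\partial_{x^1}\mathcal{F}_{2,h}+\partial_{x^1}\partial_{x^2}\mathcal{S}_h=0 \quad \text{pointwise in } E_{ij}.
\]

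\textbf{Step 2 (Galerkin part).} From \eqref{eq:GFdiv4}, $[\mathsf{R}_h]_\alpha$ is the integral of $\varphi_\alpha$ against the integrand just shown to be zero, so $\mathsf{R}_h=0$. Equivalently, in matrix form $(D_1\otimes D_2)(\mathsf{f}\otimes \mathbf{1}+\mathbf{1}\otimes\mathsf{g})=0$, because $D_m\mathbf{1}=0$: derivatives of constants vanish. By the objectivity proposition, this conclusion does not depend on the integration constants.

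\textbf{Step 3 (stabilization part; the main obstacle).} The stabilization \eqref{eq:weakG2-SUGF} integrates the same vanishing quantity against $\nabla\varphi_\alpha\cdot\mathbf{J}\tau$, so $\mathsf{ST}_h=0$ element by element. The care needed is to check that the discrete form \eqref{eq:weakG3-SUGF} agrees with this. It involves the local residual $\mathsf{R}_h^{E_{ij}}$ of \eqref{eq:GFdiv5}, which is written in terms of $\mathrm{F}_m$, $\mathrm{S}$ and the tables $\mathcal{I}$ rather than the potentials. I would use \eqref{eq:bigF4} and \eqref{eq:bigF8} to rewrite $\mathsf{R}_h^{E_{ij}}=(D_1^{E_i^1}\otimes D_2^{E_j^2})(\mathcal{F}_1+\mathcal{F}_2+\mathcal{S})^{E_{ij}}$, up to terms depending only on $x^1$ or only on $x^2$, which $D_1\otimes D_2$ kills. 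Since quadrature is collocated at Gauss--Lobatto nodes and the local operators act exactly on $\mathbb{Q}^K$, this vector is exactly the local projection of the vanishing mixed derivative. Hence $\mathsf{R}_h^{E_{ij}}=0$ for every element, whatever the values of $\textrm{diag}(\mathbf{J}_m\tau)$. This is precisely alternative \ref{item:cell_div_zero}, now realized through the GFQ operators.

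\textbf{Step 4 (conclusion).} With $\mathsf{R}_h=\mathsf{ST}_h=0$ and $d\mathrm{W}/dt=0$, every term in the DeC update \eqref{eq:DeC-GF} beyond $\mathrm{W}^n$ vanishes for all $k$ and $m$. Thus $\mathrm{W}^{n+1}=\mathrm{W}^n$. The statement's reference to \eqref{eq:weakG1-SU} should be read as its GFQ counterpart \eqref{eq:weakG1-SUGF}.
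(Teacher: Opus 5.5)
Your proposal is correct and follows essentially the paper's route: the paper likewise reduces the claim to showing that the local residual $\mathsf{R}_h^{E_{ij}}$, which equals $(D_1^{E_i^1}\otimes D_2^{E_j^2})$ applied to the local potentials, vanishes on data of the form $\mathsf{f}\otimes\mathbf{1}+\mathbf{1}\otimes\mathsf{g}$; both the Galerkin term and the SU term are linear in these local residuals and therefore vanish. Your reading of \eqref{eq:weakG1-SU} as \eqref{eq:weakG1-SUGF} is the intended one, and the continuity of $\mathbf{f}_h,\mathbf{g}_h$ that you attribute to \eqref{eq:bigF6} is not needed (and is not guaranteed there for the source potential anyway), since the whole argument is elementwise and rests only on $D_m^{E}\mathbf{1}=0$.
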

\begin{proof}
The proof is provided in \cite{barsukow2025structure,barsukow2025stationaritypreservingnodalfinite}. The main
idea is to  show that, in every element $E_{ij}$,  $\textsf{DIV}_h^{E_{ij}}$ vanishes if   applied to a function of the form $\mathbf{f}(x^1)+\mathbf{g}(x^2) \in (V_h^k)^s$, as in \cite[Proposition 5]{barsukow2025structure}, and that the stabilization term also vanishes on such functions, as in \cite[Proposition  10]{barsukow2025structure}. The extension to non-homogenous systems involves showing the same for  $\mathsf{R}_h^{E_{ij}}$, see \cite[Proposition 5]{barsukow2025stationaritypreservingnodalfinite} and 
\cite[Proposition 8]{barsukow2025stationaritypreservingnodalfinite}. 
\end{proof}

The above result shows that the flux pre-processing associated to  \eqref{eq:GF0a} and \eqref{eq:GF10} allow us to go beyond the negative result of section~\ref{sec:lack}, and to have a rich set of discrete stationary states.   Note that, as the proposition says, \emph{any}  functions   $\mathbf{f}(x)$ and $\mathbf{g}(y)$ define a family of the kernel.
Physically relevant solutions, however, are only obtained for
 \begin{equation}\label{eq:equilibria-phys}
 \begin{aligned}
 &\mathbf{g}_h(x^2) =  \mathcal{F}_{1,h}(\mathbf{W}_h)(x^1 =x^1_0,x^2)\;, \quad   \mathbf{f}_h(x^1) =\mathcal{F}_{2,h}(\mathbf{W}_h)(x^1, x^2 =x^2_0)\\
\Rightarrow & \int_{x^2_0}^{x^2}[\mathbf{F}_{1,h}-\mathbf{F}_{1,h}(x^1 =x^1_0,x^2) ] \mathrm{d}x^2 + \int_{x^1_0}^{x^1}[\mathbf{F}_{2,h}-\mathbf{F}_{2,h}(x^1,x^2=x^2_0) ] \mathrm{d}x^1=
 \int_{x^1_0}^{x^1} \int_{x^2_0}^{x^2}\mathbf{S}_hd\mathbf{x}
\end{aligned}
 \end{equation} 
having used the definition of the potentials to obtain the second relation.
This allows us to define a  particular case of the above family of equilibria that is defined by the proposition below.  
\begin{proposition}\label{prop:equilibria_local}
The class of functions $\mathbf{W}_h$ belonging to the kernel of Proposition \ref{prop:equilibria} 
with $\mathbf f$ and $\mathbf g$ given by \eqref{eq:equilibria-phys}, also verifies    $\Psi_h+\mathcal{S}_h=0$  $\forall E_{ij}\in \Omega$,
with $\Psi_h$  the  array of integrated divergences  \eqref{eq:Phi_gf}.
\end{proposition}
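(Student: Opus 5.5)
The plan is to work nodally, element by element. The goal is to show that, under the choice \eqref{eq:equilibria-phys}, the array $\Psi^{ij}_h+\mathcal{S}^{ij}$ vanishes at every local node $(p,q)$ of every element $E_{ij}$. Here $\mathcal{S}^{ij}$ is taken with the local normalization $\mathcal{S}^{ij}_{00}=0$, i.e.\ the local primitive $-\int_{x^2_{j,0}}^{x^2_{j,q}}\int_{x^1_{i,0}}^{x^1_{i,p}}\mathbf{S}_h$, consistent with \eqref{eq:GF12}.

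Step 1: start from the discrete kernel relation \eqref{eq:equilibria}, $(\mathcal{F}_1)^{ij}_{pk}+(\mathcal{F}_2)^{ij}_{pk}+\mathcal{S}^{ij}_{pk}=\mathsf f^i_p+\mathsf g^j_k$. Insert \eqref{eq:equilibria-phys}, which by definition means $\mathsf g^j_k=(\mathcal{F}_1)$ evaluated at the global left boundary $x^1_0$ and row $x^2_{j,k}$, and $\mathsf f^i_p=(\mathcal{F}_2)$ evaluated at the global bottom $x^2_0$ and column $x^1_{i,p}$. This gives a global identity at every node: the sum of the potentials and the source primitive, minus their traces on the lines $x^1=x^1_0$ and $x^2=x^2_0$, vanishes. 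By the continuity constraints \eqref{eq:bigF6} and the line-by-line construction \eqref{eq:bigF3}--\eqref{eq:bigF4}, this quantity is exactly the global analogue of $\Psi_{\mathbf{x}_\star}$ in \eqref{eq:GF12} with $\mathbf{x}_\star=(x^1_0,x^2_0)$. At nodes it equals $\int_{x^1_0}^{x^1_\alpha}\int_{x^2_0}^{x^2_\alpha}(\nabla\cdot\mathbf F_h-\mathbf S_h)$, using exactness of the Lobatto IIIA integration of polynomials (Proposition~\ref{prop:Lobatto_integration}), as in the computation following \eqref{eq:Phi_gf}.

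Step 2: localize. Write $G(\mathbf{x}_\alpha)$ for this global nodal quantity, i.e.\ the discrete double integral of the residual over $[x^1_0,x^1_\alpha]\times[x^2_0,x^2_\alpha]$. Additivity of the nodal double integrals over rectangles gives, for the node $(x^1_{i,p},x^2_{j,q})$ in $E_{ij}$,
$$
[\Psi^{ij}_h+\mathcal{S}^{ij}]_{pq}=G(x^1_{i,p},x^2_{j,q})-G(x^1_{i,0},x^2_{j,q})-G(x^1_{i,p},x^2_{j,0})+G(x^1_{i,0},x^2_{j,0}).
$$
This is the discrete analogue of $\partial_{x^1x^2}$ applied to differences. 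Since $G\equiv0$ at all nodes by Step 1, every term vanishes and the claim follows.

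The main obstacle is the bookkeeping in Step 1–2. The nodal potentials are built piecewise, with integration constants propagated through \eqref{eq:bigF6}, so one must check that the global potential at a node really equals the sum of the elementwise exact integrals along the grid lines, and that the local $\Psi^{ij}_h$ of \eqref{eq:Phi_gf} is the corresponding rectangle-difference. I would verify this first for $\mathcal F_1$ along a single column $x^1=x^1_{i,p}$, where the telescoping over $j$ is immediate, then repeat for $\mathcal F_2$ and for $\mathcal S$. For $\mathcal S$, the constants $\mathcal{S}^{ij}_{00}$ must be chosen compatibly so that the tensor Lobatto integration \eqref{eq:bigF8} is additive across elements; I would argue this using the tensor structure $\mathcal I_1\otimes\mathcal I_2$ and exactness on $\mathbb Q^K$.
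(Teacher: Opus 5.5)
Your proof is correct, but you localize differently from the paper. The paper argues by a sweep over the mesh. On $E_{00}$ the element-anchored and corner-anchored integrals coincide, so the kernel relation with \eqref{eq:equilibria-phys} gives $\Psi_h+\mathcal S_h=0$ there subcell by subcell, with increasing $p,q$. The result is then propagated to $E_{10}$, $E_{01}$ and onward, each time using what was already established on the previously visited elements.

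You instead note that the second line of \eqref{eq:equilibria-phys} is exactly $G\equiv 0$ at all nodes. You then obtain every local entry $[\Psi^{ij}_h+\mathcal S^{ij}]_{pq}$ at once as a four-corner alternating sum of $G$. This is what the paper's iteration does implicitly, but your version is order-free and needs no induction hypothesis. It also isolates the only two ingredients: exactness of the nodal integration tables on $\mathbb Q^K$ interpolants, and additivity of integrals over rectangles.

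Your route also gives slightly more. The four-corner sum annihilates any nodal function of the form $a(x^1)+b(x^2)$. Applied directly to \eqref{eq:equilibria}, it shows that $\Psi_h+\mathcal S_h=0$ holds elementwise for every member of the kernel of Proposition~\ref{prop:equilibria}. This holds whatever the univariate $\mathsf f,\mathsf g$, and however the integration constants along each grid line are fixed. The specific choice \eqref{eq:equilibria-phys} is needed only for your stronger intermediate statement $G\equiv 0$. The paper's sweep, for its part, mirrors the line-by-line (Lobatto IIIA) construction of the potentials, so it indicates how such discrete equilibria can be built by marching from a corner.

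One detail of your bookkeeping plan fails as written. No choice of the constants $\mathcal S^{ij}_{00}$ makes the local formula \eqref{eq:bigF8} consistent across elements. That formula is constant along the left edge $x^1=x^1_{i,0}$ of $E_{ij}$, whereas on the right edge of $E_{i-1,j}$ it varies with $k$ in general. On $E_{ij}$, the global double primitive $-\int_{x^1_0}^{x^1}\int_{x^2_0}^{x^2}\mathbf S_h\,d\mathbf x$ equals \eqref{eq:bigF8} plus a constant plus a function of $x^1$ plus a function of $x^2$.

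The fix is to take the global $\mathcal S_h$ to be this nodal double primitive directly. Its univariate discrepancy with \eqref{eq:bigF8} is invisible both to $\partial_{x^1x^2}$ and to your four-corner sum, so Steps 1 and 2 go through unchanged.
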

\begin{proof}
 The result is shown iteratively. 
 Within this first element $E_{00}$ we integrate on each subcell $[x^1_{0,0},x^1_{0,p}]\times [x^2_{0,0},x^2_{0,q}]$ with increasing $p$ and $q$ from $0$ to $K$.
 This allows us to obtain $\Psi_h+\mathcal{S}_h=0$  for $E_{00}$. We then  increase $i$ and $j$ by 1 in each direction, and use
 the result on $E_{00}$ to prove the same on $E_{10}$ and $E_{01}$. We continue iterating on $i$ and $j$ until all $E_{ij}$ have been covered.
\end{proof}
To link this result to the notion of stationarity preservation introduced in Condition~\ref{cond:SP}, we remark that the condition   $\Psi_h+\mathcal{S}_h=0$  
contains some trivial entries associated to the degrees of freedom on the element boundaries $x^1=x^1_{i,0}$ and $x^2=x^2_{j,0}$. A simple count shows that
this leaves $K^2$ conditions. As a consequence,  on a square mesh with $N_1 = N_2$, 
the system $\Psi_h+\mathcal{S}_h=0$   $\forall E_{ij}\in \Omega$ provides $K^2N^2$ constraints times the size of the system $s$, 
for  the $(KN+1)^2\times s = K^2N^2\times s + ( 2KN+1 )\times s$ unknowns. Differently from the case discussed in section~\ref{sec:lack}, here we always have 
a greater number of unknowns than the number of constraints. The method thus verifies  the necessary Condition~\ref{cond:SP} for stationarity preservation.

\subsection{Pointwise super-convergence estimate at equilibrium}
Previous works on global flux finite element approximations based on collocated Gauss-Lobatto elements have all shown  that the kernel
of the scheme defines data inheriting the consistency properties of the LobattoIIIA  collocation method (see \cite{mantri2024fully,barsukow2025structure,barsukow2025stationaritypreservingnodalfinite} and \cite{Hairer1993}).
We prove here a similar result for the projection of exact data onto the kernel of the scheme. To this end, we recall that the definition of the potentials
verify the characterization of Proposition~\ref{prop:Lobatto_integration}. We can thus use the consistency error of the  LobattoIIIA method for each potential.
As we recall, this consistency is of order $h^{K+2}$ for internal degrees of freedom (in each one-dimensional direction), and order $h^{2K}$ for the endpoints of each one-dimensional element.
Assume now to have an exact smooth stationary solution, and consider for example   the first in \eqref{eq:bigF7}. If $\mathbf{F}_1^e$ is the exact flux, 
we can   write at a generic \emph{grid node} $x^{2,\star}$ (see \cite[Chapter 7]{Hairer1993})
\begin{equation}\label{eq:L3A_estimate1}
\mathcal{F}_{1,h}(t^n,x^1_{{\pi}}, x^{2,\star}) -\mathcal{F}_{1,h}^e(t^n,x^1_{{\pi}}, x^{2,\star}) = \mathcal{O}(h^{K+2}).
\end{equation}
The exact value of the potential is trivially given by the exact integral of the flux, this we have, by definition of the LobattoIIIA method   
\begin{equation}\label{eq:L3A_estimate2}
\int\limits_{x^2_0}^{x^{2,\star}}(\mathbf{F}_{1,h}^e   -\mathbf{F}_1^e)d x^2 = \mathcal{O}(h^{K+2}).
\end{equation}
Since $x^{2,\star}$ is finite, we deduce that $\mathbf{F}_{1,h}^e  -\mathbf{F}_1^e$ is also of an order $\mathcal{O}(h^{K+2})$, and similarly we can proceed for the flux in the $x^2$ direction,
using the second in \eqref{eq:bigF7}, and for the source using integrals in alternate directions (assuming we are given exact and smooth values of $\partial_{x^1}\mathbf{S}^e$ and $\partial_{x^2}\mathbf{S}^e$).
This allows us to infer   the properties of the  flux projection associated to the LobattoIIIA method, and to prove the following  estimate. 
\begin{proposition}[Kernel consistency  estimate]\label{prop:consistency_equilibria} Let $\mathbf{W}^e$ be  a smooth enough stationary solution of \eqref{Euler_Eq3}, and let  $\mathbf{F}^e$, and 
$\mathbf{S}^e$ be  the associated  exact flux tensor and source term.  Consider now the steady limit of the   method obtained from \eqref{eq:DeC-RD}:
$$
\sum_{E_{ij}\in\Omega}\Phi_{\alpha}^{ij}=0 ,\qquad \forall \alpha.
$$
Let $\psi$ be an arbitrary smooth test function with compact support  $\psi \in C^1_0(\Omega)$, and consider the following truncation error at steady state:
\begin{equation}\label{eq:TE0}
\mathsf{E}_h = \sum_{\alpha}\psi_\alpha \sum_{E_{ij}\in\Omega}\Phi_{\alpha}^{ij}(\mathbf{W}^e).
\end{equation}
If  $\Phi_{\alpha}^{ij}$ are  the fluctuations of the stationarity preserving method   \eqref{eq:fluct_su_gf}, then the following truncation error estimate holds:
\begin{equation}
\|\mathsf{E}_h\| \le C h^{K+1+t}  
\end{equation}
with $t=0$ for polynomial degree $K=1$ and $t=1$ for any polynomial degree $K\ge 2$.
\begin{proof}
See appendix \ref{app:consistency}
\end{proof}
\end{proposition}
The above consistency definition follows a classical  characterization used for residual distribution \cite{RD-ency,AR:17,amr2025}. Note that, despite being evaluated at stationary state,
the fluctuations $\Phi_{\alpha}^{ij}$ do not vanish, as we do not evaluate them on the discrete kernel, but formally replacing sampled values of the exact solution in the scheme.
The remainder is   classically used as a measure of the truncation error, and in the case of the stationarity preserving method, and thanks to the embedding of the LobattoIIIA method,
leads to a super-convergence result for all $K\ge 2$.

\begin{remark}[Kernel consistency with the low Mach limit] 
	It has been suggested in~\cite{barsukow2019stationarity} that numerical methods for the Euler equations whose linearization 
	(i.e., the corresponding method for linear acoustics, see~\cite{barsukow2025structure}) preserves stationarity are inherently low Mach number compliant. 
	A nonlinear stationarity-preserving method naturally shares this property, and several experimental confirmations of this behavior can be found in \cite{barsukow2018low}.
	Indeed, the numerical dissipation brought by upwind terms is what in both cases prevents the preservation of stationarity solutions.
	Centered schemes, on the other hand, are known to have no numerical dissipation and can preserve stationarity and be low Mach compliant.
	
	For the SUPG-GFQ,  besides the super-convergence, the stationary kernel has another remarkable property, which is that 
	it is compatible with centered approximations at each degree of freedom at the equilibrium.
	This can be seen looking at states verifying the local equilibria characterization 
	of Proposition~\ref{prop:equilibria_local}. There, the condition $\Psi_h+\mathcal{S}_h=0$ corresponds to $K^2$ constraints for every element $E_{ij}$.
	Without loss of generality, we can consider these conditions to be related to the integrals over the $K^2$ subcells $E_{ij}^{pq}\equiv [ x_{i,p}^1, x_{i,p+1}^1 ]\times[x_{j,q}^2, x_{j,q+1}^2]$ for $p,q\in\{0, K-1\}$.
	In other words, the kernel of Proposition~\ref{prop:equilibria_local} verifies
	$$
	\int_{x_{i,p}^1}^{ x_{i,p+1}^1 } \int_{x_{j,q}^2}^{ x_{j,q+1}^2} (\nabla\cdot\mathbf{F}_h -\mathbf{S}_h)\,d\mathbf{x} =0 \;\forall \,p,q\in\{0, K-1\}.
	$$
	
	There are interesting linear combinations of these (sub-)elemental conditions that are equivalent to the original ones and that coincide with classical central approximations of the divergence at the grid nodes.
	For example, for $\mathbb Q^1$ elements, when applied to a variable $p$, the gradients residuals can be written as a function of the centred approximation:
	$$
	\begin{aligned}
		\left. \dfrac{d}{dx}\right\rvert_{ij}  p = & \dfrac{p_{i+1,j+1} + 2 p_{i+1,j} + p_{i+1,j-1} }{8\,h} - \dfrac{p_{i-1,j+1} + 2 p_{i-1,j} + p_{i-1,j-1} }{8\,h},\\
		\left. \dfrac{d}{dy}\right\rvert_{ij}  p = & \dfrac{p_{i+1,j+1} + 2 p_{i,j+1} + p_{i-1,j+1} }{8\,h} - \dfrac{p_{i+1,j-1} + 2 p_{i,j-1} + p_{i-1,j-1} }{8\,h}.
	\end{aligned}
	$$
	The interesting property is that for low Mach stationary states, the kernel behaves 
	as the kernel of centered approximations, which are known to provide the correct asymptotic limit. 
	This heuristic argument will be verified in the numerical computations proposed in the results section.
	The full theoretical analysis is still ongoing and will be part of future work.  
\end{remark}

\subsection{A correction to preserve exactly hydrostatic  states}\label{sec:hydro_equi}

As in previous works on global flux quadrature for shallow water models \cite{Ciallella2023,mantri2024fully,Micalizzi2024,Kazolea2025}, we provide a correction to preserve \emph{exactly} static equilibria.
The  idea is to exploit the fact that  for \eqref{Euler_Eq0} we need to provide a definition of the nodal   gradient of the  potential $\phi$. This can be done in a way compatible with the stationary kernel.
To do this, we adapt an idea due to  \cite{CZ17}. The reference proposes schemes that can be well balanced for both isothermal and  isoentropic equilibria, however not simultaneously:
one has to choose a priori which equilibrium to preserve. For simplicity, we use here the correction for the isothermal case. The modification for the isoentropic can be deduced easily following \cite{CZ17}.

Given reference values $(\bar\rho,\bar p)$, isothermal hydrostatic equilibria are determined by the relation
\begin{equation}\label{eq:isothermal_equi}
\dfrac{p}{\bar p} =\dfrac{\rho}{\bar\rho}.
\end{equation}
For states at rest, the momentum equation can be written as 
\begin{equation}
0 = \nabla p + \rho \nabla \phi =  \rho  \nabla\kappa \Rightarrow  \kappa := \bar p\ln (\rho)   + \bar\rho \phi .	
\end{equation}
Hydrostatic equilibria are characterized by $\kappa \equiv \kappa_0$. To integrate the source, we write $\rho$ in function of $\kappa$ and $\phi$: 
\begin{equation}\label{eq:rho_from_kappa}
	\rho =  e^{\kappa/\bar p}\, e^{-\bar\rho \phi/\bar p}.
\end{equation}
Noting that $\nabla e^{-\bar{\rho} \phi/\bar p} = - \frac{\bar \rho}{\bar p} e^{-\bar\rho \phi/\bar p} \nabla \phi$ and using \eqref{eq:rho_from_kappa}, we can rewrite the integration of the source equivalently as follows
$$
\int \rho \nabla \phi  = -  \int  \frac{\bar p}{\bar \rho} e^{\kappa / \bar p}    \nabla   e^{-\bar \rho \phi/\bar p}.
$$
In particular, we set for the nodal source in each element $E_{ij}$:
\begin{equation}\label{eq:wb}
\left( \rho \nabla \phi \right)_h(\mathbf x):=  -  \frac{\bar p}{\bar \rho} \sum_{\alpha,\beta}    \varphi_\alpha(\mathbf x)       e^{\kappa_\alpha /\bar p} \nabla \varphi_\beta(\mathbf x)   e^{-\bar \rho \phi_\beta/\bar p},
\end{equation}
which at hydrostatic equilibrium, i.e., when $\kappa_\alpha = \kappa_0$ for all $\alpha$, since $\sum_{\alpha} \varphi(\mathbf x)\equiv 1$, reduces to
$$
\left( \rho \nabla \phi \right)_h(\mathbf x) =  -  \frac{\bar p}{\bar \rho} e^{\kappa_0/\bar p}  \sum_{\beta}        \nabla \varphi_\beta(\mathbf x)   e^{-\bar \rho \phi_\beta/\bar p}.
$$
For a hydrostatic equilibrium \eqref{eq:isothermal_equi},  we have now, using \eqref{eq:rho_from_kappa}, that
$$
\nabla p_h (\mathbf x)=  \sum_\beta\nabla\varphi_\beta(\mathbf x) p_\beta =   \sum_\beta\nabla\varphi_\beta(\mathbf x)   \bar p \dfrac{\rho_\beta}{\bar \rho} 
=    \sum_\beta\nabla\varphi_\beta (\mathbf x)   \frac{\bar p}{\bar \rho}   e^{\kappa_0/\bar p}  e^{-\bar\rho \phi_\beta/\bar p} =- \left( \rho \nabla \phi \right)_h(\mathbf x).
$$
One can easily check that the last  expression is identical to \eqref{eq:wb} thus leading to 
$\Psi_h+\mathcal{S}_h=0$ and exact balancing for this particular case.

Concerning the choice of the   reference state,  one can a priori use a local one since the ratio $\bar p/\bar \rho$ is constant at equilibrium. We
have used in practice elemental reference values when integrating the source, and in particular $\bar p^{E_{ij}} = p^{ij}_{0,0}$ and $\bar \rho^{E_{ij}} = \rho^{ij}_{0,0}$ (values of the first local degree of freedom).
This modification will be denoted in the following section as GFQ (WB) and we will compare some simulations without this modification denoting it with GFQ (non-WB).

\section{Numerical results}\label{sec:results}
We consider a quite large suite of numerical benchmarks involving solutions out of equilibrium, stationary states, and complex 
instabilities developing close to stationary equilibria. In most of the problems considered, the compatibility of the numerical method with stationary states,
and possibly low Mach number flows, is very relevant. Comparisons are performed between the standard and the stationarity preserving variant of SUPG.
Where relevant   we also include comparisons with the HLLC \cite{Toro1994,Batten1997} method with MUSCL-type reconstruction \cite{vanLeer1979} to include a method which
has a  more standard, Laplacian type, form of the numerical  dissipation. 

\subsection{Moving isentropic vortex}

To test the order of accuracy of the method, we consider the exact solution of a moving \textit{isentropic vortex} proposed by Shu et al.~\cite{ShuVortex1998}.
We use a standard set up, on the square  domain  $[0,10]^2$, we consider a velocity field defined as 
\begin{equation}\label{eq:vel_vortex}
	\begin{split}
		u(x,y,t) &= u_\infty - \frac{\varepsilon}{2\pi}\,\exp\Bigl(\frac{1}{2}(1 - r^2)\Bigr)\,(y - y_0(t)), \\
		v(x,y,t) &= v_\infty + \frac{\varepsilon}{2\pi}\,\exp\Bigl(\frac{1}{2}(1 - r^2)\Bigr)\,(x - x_0(t)),
	\end{split}
\end{equation}
with background velocity $(u_\infty, v_\infty) = (1.0,\,1.0)$, and $(x_0,y_0)$ the vortex center initially set  at $(5,5)$.
The pressure and density are set to  $\rho(r) = T(r)^{1/(\gamma - 1)}$ and  $p(r) = T(r)^{\gamma / (\gamma - 1)}$, with $T$  the temperature 
\begin{equation}\label{eq:temp_vortex}
	T(r) = 1 - \frac{(\gamma - 1)\,\varepsilon^2}{8\,\gamma\,\pi^2}\,\exp(1 - r^2),
\end{equation}
with    adiabatic constant  $\gamma=1.4$, and  with $r$ the distance from the vortex center.
With this initialization, the vortex  translates at constant speed. We run simulations until  the final time $t_{\mathrm{end}}=2$s.

\begin{figure}[H]
	\centering
	\begin{minipage}[c]{0.49\textwidth}
		\centering
		\includegraphics[trim=5cm 0 0.75cm 0,clip,width=\textwidth]{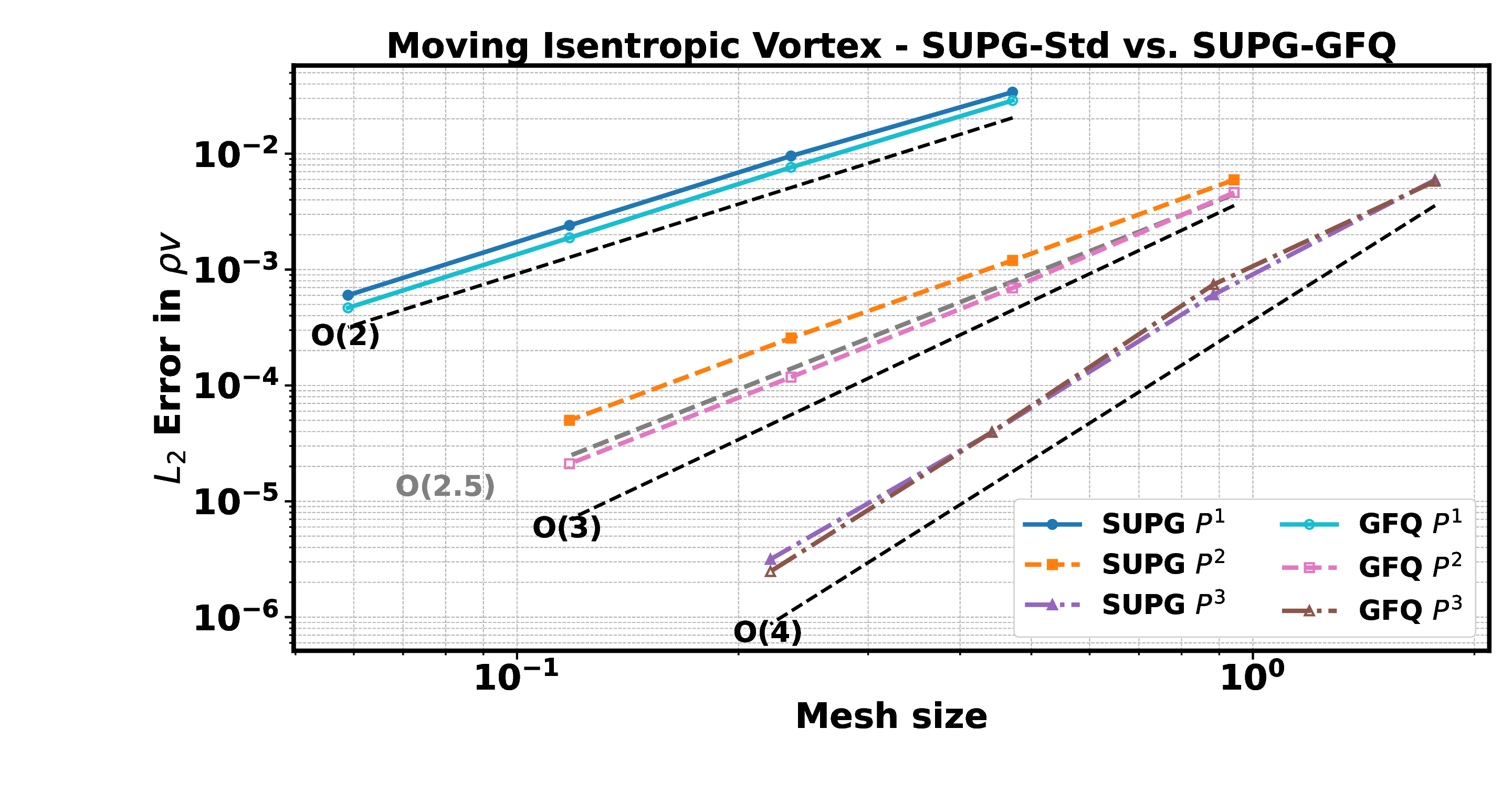}
	\end{minipage}
	\hfill
	\begin{minipage}[c]{0.49\textwidth}
		\centering
		\includegraphics[trim=5cm 0 0.75cm 0,clip,width=\textwidth]{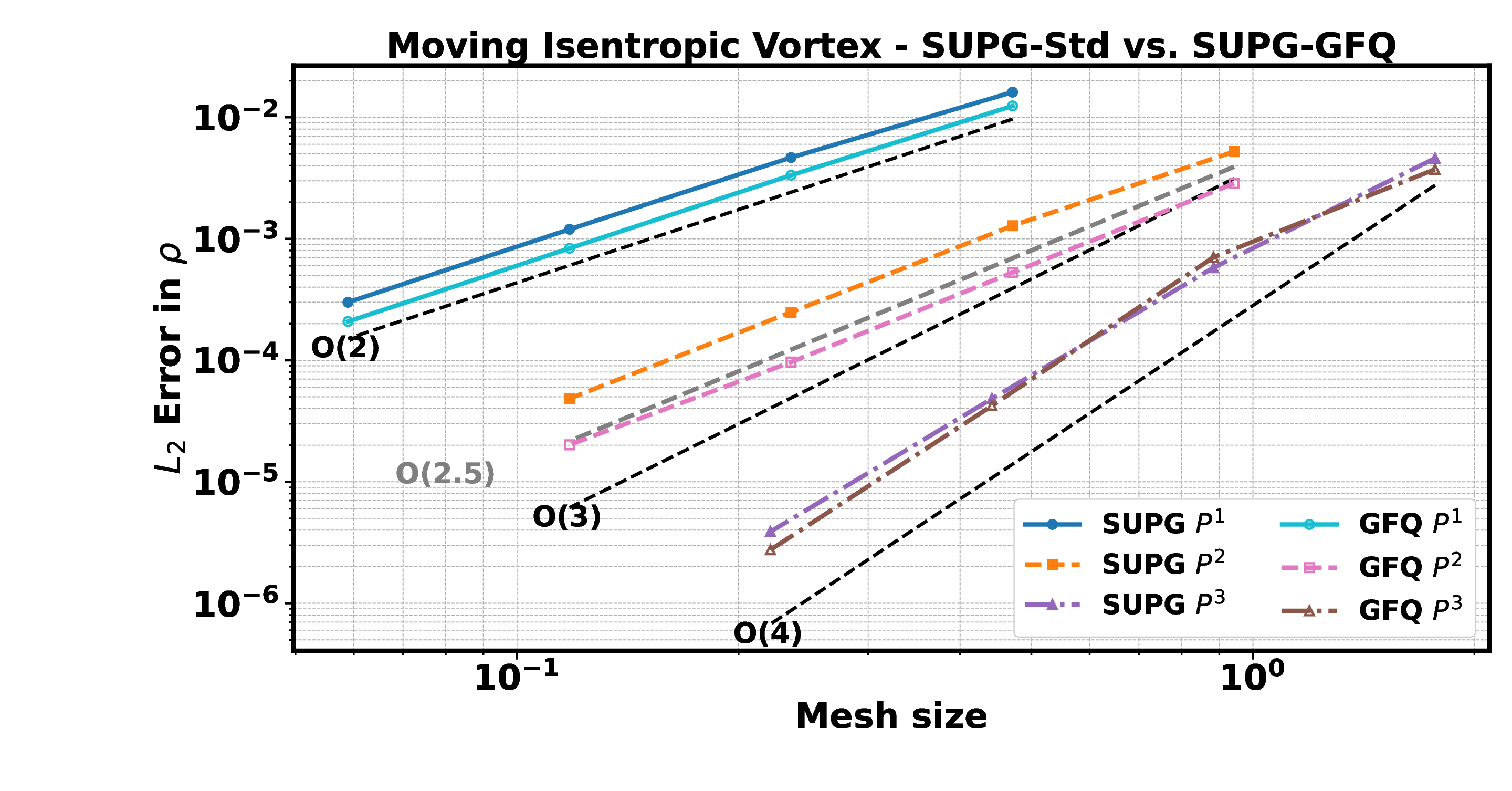}
	\end{minipage}
	\caption{Euler equations: $L^2$ error on  momentum and density  for the moving isentropic vortex. Comparison between the SUPG-Std and SUPG-GFQ methods. Left: $\rho v$. Right: $\rho $.}
	\label{fig:movingvortexeuler}
\end{figure}

In Figure~\ref{fig:movingvortexeuler} we report the error convergence  of the standard SUPG (SUPG-Std) and the SUPG with global flux quadrature (SUPG-GFQ) for the moving isentropic vortex test case. 
The results are presented for $\mathbb{Q}^1$, $\mathbb{Q}^2$, and $\mathbb{Q}^3$ elements. Both methods show convergence towards the exact solution as the mesh is refined with the expected order of accuracy $K+1$. For $\mathbb Q^2$ elements there is a slight decrease of the order of accuracy as already observed in many other works, inter alia \cite{michel2023spectral,barsukow2025structure}. 
What is worth noticing is that the SUPG-GFQ method shows some   accuracy improvements (in error magnitude) compared to the standard SUPG already for  this moving
vortex case which is not a stationary one. In particular,  the errors of the SUPG-GFQ method are consistently smaller by a factor between 1.3 and 2. 

\subsection{Steady isentropic vortex}
We consider the previous isentropic vortex \eqref{eq:temp_vortex}-\eqref{eq:vel_vortex} but with zero background velocity,  
providing a stationary solution of the Euler equations. We use this configuration to test the ability of the method to preserve steady states,
and check the super-convergence property. We run the initial value problem starting from the exact stationary state, until the 
final time $t_{\mathrm{end}}=1$s.

\begin{figure}
	\begin{minipage}{0.845\textwidth}
		\centering
		\begin{subfigure}{0.328\textwidth}
			\centering
			\includegraphics[width=\textwidth]{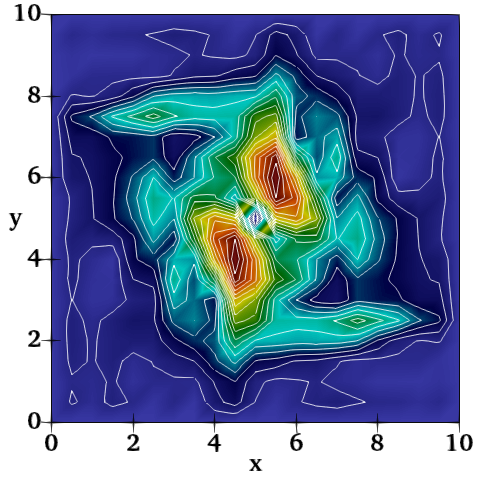}
			\caption{SUPG-Std}
			\label{fig:st_vort_SUPG_15}
		\end{subfigure}
		\hfill
		\begin{subfigure}{0.328\textwidth}
			\centering
			\includegraphics[width=\textwidth]{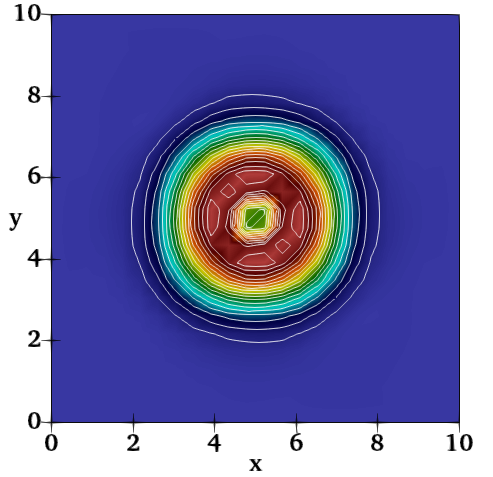}
			\caption{SUPG-GFQ}
			\label{fig:st_vort_GF_15}
		\end{subfigure}
		\hfill
		\begin{subfigure}{0.328\textwidth}
			\centering
			\includegraphics[width=\textwidth]{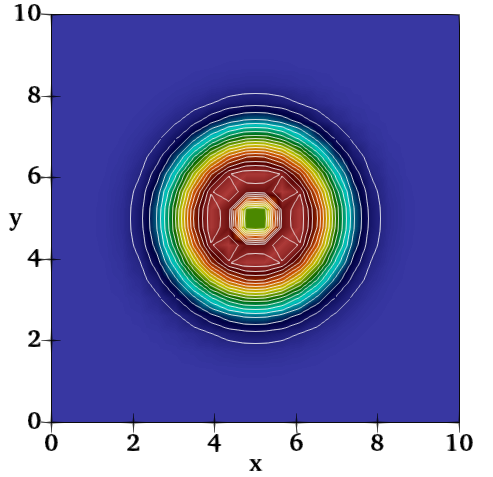}
			\caption{Exact}
			\label{fig:st_vort_exact_15}
		\end{subfigure}
		\centering
		
		\begin{subfigure}{0.328\textwidth}
			\centering
			\includegraphics[width=\textwidth]{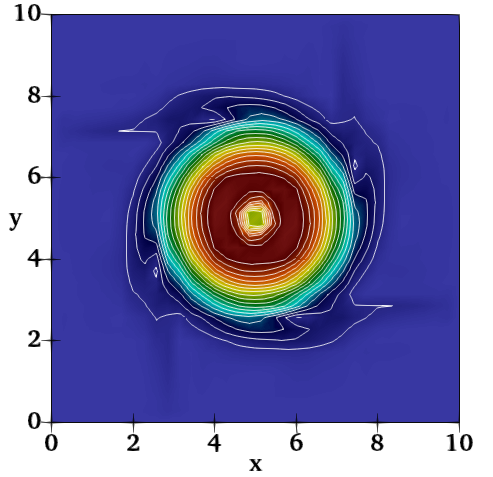}
			\caption{SUPG-Std}
			\label{fig:st_vort_SUPG_35}
		\end{subfigure}
		\hfill
		\begin{subfigure}{0.328\textwidth}
			\centering
			\includegraphics[width=\textwidth]{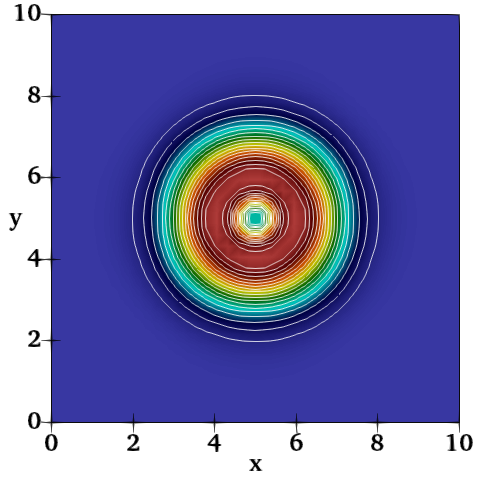}
			\caption{SUPG-GFQ}
			\label{fig:st_vort_GF_35}
		\end{subfigure}
		\hfill
		\begin{subfigure}{0.328\textwidth}
			\centering
			\includegraphics[width=\textwidth]{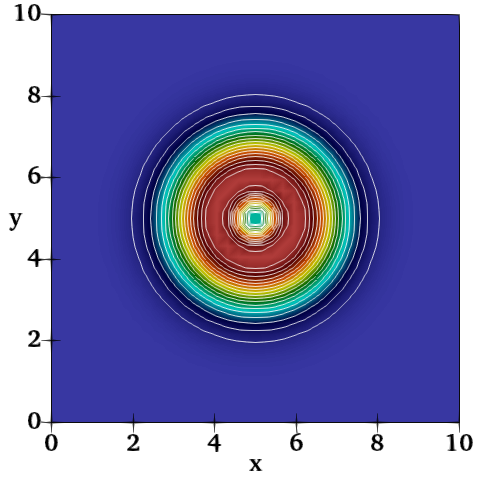}
			\caption{Exact}
			\label{fig:st_vort_exact_35}
		\end{subfigure}
		
	\end{minipage}
	\hfill
	\begin{minipage}{0.145\textwidth}
		\centering
		\includegraphics[width=\textwidth]{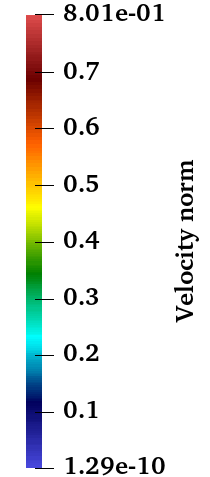}
	\end{minipage}
	
	\caption{
		Isentropic steady vortex with Mach = $0.7$ for the Euler equations at \( t_{\mathrm{end}} = 50\,\mathrm{s} \). 
		\textbf{Top row:} results on a \( 15 \times 15 \) mesh; 
		\textbf{bottom row:} results on a \( 35 \times 35 \) mesh, both with $\mathbb{Q}^1$ elements. 
		\textbf{Left:} standard SUPG. \textbf{Middle:} SUPG–GFQ. \textbf{Right:} exact solution.
	}\label{fig:steady_vortex}
\end{figure}

First, in Figure~\ref{fig:steady_vortex} we plot the velocity norm of the solution of the SUPG-Std and SUPG-GFQ methods for the steady isentropic vortex problem at different mesh resolutions for $\mathbb Q^1$ elements. It is evident that the solutions of the two schemes are strongly different. The SUPG-Std method shows a significant deterioration of the vortex structure, and even at the finer level, there are spurious structures showing up during the simulation. On the other hand, the SUPG-GFQ method is able to preserve the vortex structure even on the coarser mesh in close agreement with the exact solution.

 \begin{table}[htbp]
	\small
	\centering
	\caption{Convergence of the \textbf{steady isentropic vortex} at $t_{\mathrm{end}} = 1\,\mathrm{s}$.
		$L^2$ errors and experimental orders of accuracy (EOA) for SUPG-Std and SUPG-GFQ
		using $\mathbb{Q}^1$, $\mathbb{Q}^2$, and $\mathbb{Q}^3$ elements.}
	\label{tab:st_vortex_conv}
	\begin{tabular}{c |c |c|cc|cc|cc|cc}
		\hline\\[3pt]
		Element & Method & $N_x=N_y$
		& \multicolumn{2}{c|}{$\rho$}
		& \multicolumn{2}{c|}{$\rho u$}
		& \multicolumn{2}{c|}{$\rho v$}
		& \multicolumn{2}{c}{$\rho E$} \\[3pt]
		\hline\\[3pt]
		& & 
		& $L^2$ error & EOA
		& $L^2$ error & EOA
		& $L^2$ error & EOA
		& $L^2$ error & EOA \\[5pt]
		\hline\\[3pt]
		
		\multirow{8}{*}{$\mathbb{Q}^1$}
		& \multirow{4}{*}{SUPG-Std}
		&   30  & 2.49E-03 & --   & 3.30E-02 & --   & 3.27E-02 & --   & 2.92E-03 & --  \\[5pt]
		& & 60  & 7.09E-04 & 1.81 & 8.19E-03 & 2.01 & 8.11E-03 & 2.01 & 7.44E-04 & 1.97 \\[5pt]
		& & 120 & 1.80E-04 & 1.98 & 2.03E-03 & 2.01 & 2.02E-03 & 2.01 & 1.84E-04 & 2.02 \\[5pt]
		& & 240 & 4.50E-05 & 2.00 & 5.04E-04 & 2.01 & 5.03E-04 & 2.00 & 4.56E-05 & 2.01 \\[5pt]
		\cline{2-11}\\[3pt]
		& \multirow{4}{*}{SUPG-GFQ}
		&   30  & 1.06E-03 & --   & 1.47E-02 & --   & 1.47E-02 & --   & 1.26E-03 & --  \\[5pt]
		& & 60  & 3.45E-04 & 1.62 & 3.93E-03 & 1.90 & 3.93E-03 & 1.90 & 3.57E-04 & 1.82 \\[5pt]
		& & 120 & 8.95E-05 & 1.95 & 9.98E-04 & 1.98 & 9.98E-04 & 1.98 & 9.04E-05 & 1.98 \\[5pt]
		& & 240 & 2.25E-05 & 1.99 & 2.51E-04 & 1.99 & 2.51E-04 & 1.99 & 2.26E-05 & 2.00 \\[5pt]
		\hline\\[3pt]
		
		\multirow{8}{*}{$\mathbb{Q}^2$}
		& \multirow{4}{*}{SUPG-Std}
		& 15     & 1.69E-03 & --   & 2.58E-02 & --   & 2.57E-02 & --   & 1.82E-03 & --   \\[5pt]
		& & 30   & 2.89E-04 & 2.55 & 5.90E-03 & 2.13 & 5.82E-03 & 2.14 & 3.23E-04 & 2.49 \\[5pt]
		& & 60   & 4.62E-05 & 2.65 & 1.14E-03 & 2.37 & 1.13E-03 & 2.37 & 5.91E-05 & 2.45\\[5pt]
		& & 120  & 6.70E-06 & 2.79 & 2.06E-04 & 2.47 & 2.05E-04 & 2.46 & 1.01E-05 & 2.55\\[5pt]
		\cline{2-11}\\[3pt]
		& \multirow{4}{*}{SUPG-GFQ}
		&   15   & 8.51E-04 & --   & 7.36E-03 & --   & 7.39E-03 & --   & 6.56E-04 & --   \\[5pt]
		& & 30   & 5.00E-05 & 4.09 & 4.68E-04 & 3.97 & 4.69E-04 & 3.98 & 3.96E-05 & 4.05 \\[5pt]
		& & 60   & 3.11E-06 & 4.01 & 2.86E-05 & 4.03 & 2.87E-05 & 4.03 & 2.43E-06 & 4.02 \\[5pt]
		& & 120  & 1.91E-07 & 4.03 & 1.73E-06 & 4.05 & 1.73E-06 & 4.05 & 1.50E-07 & 4.02 \\[5pt]
		\hline\\[3pt]
		
		\multirow{8}{*}{$\mathbb{Q}^3$}
		& \multirow{4}{*}{SUPG-Std}
		&   8   & 3.58E-03 & --   & 5.49E-02 & --   & 5.50E-02 & --   & 4.49E-03 & --   \\[5pt]
		& & 16  & 3.24E-04 & 3.47 & 4.03E-03 & 3.77 & 4.06E-03 & 3.76 & 2.89E-04 & 3.96 \\[5pt]
		& & 32  & 1.39E-05 & 4.54 & 2.39E-04 & 4.08 & 2.35E-04 & 4.11 & 1.15E-05 & 4.65 \\[5pt]
		& & 64  & 6.69E-07 & 4.38 & 1.34E-05 & 4.15 & 1.33E-05 & 4.14 & 6.12E-07 & 4.24 \\[5pt]
		\cline{2-11}\\[3pt]
		& \multirow{4}{*}{SUPG-GFQ}
		&   8   & 1.07E-03 & --   & 7.26E-03 & --   & 7.56E-03 & --   & 1.04E-03 & --   \\[5pt]
		& & 16  & 4.58E-05 & 4.54 & 4.97E-04 & 3.87 & 4.97E-04 & 3.93 & 3.70E-05 & 4.81 \\[5pt]
		& & 32  & 1.42E-06 & 5.01 & 2.40E-05 & 4.37 & 2.40E-05 & 4.37 & 1.27E-06 & 4.86 \\[5pt]
		& & 64  & 4.84E-08 & 4.87 & 8.60E-07 & 4.80 & 8.60E-07 & 4.80 & 3.87E-08 & 5.04 \\[5pt]
		\hline
	\end{tabular}
\end{table}

In Table~\ref{tab:st_vortex_conv}, we report the $L^2$ errors and the experimental order of accuracy for the steady isentropic vortex test case. The results show that both methods converge towards the exact solution as the mesh is refined, but the SUPG-Std with the classical order of accuracy $K+1$, while the SUPG-GFQ with an extra order of accuracy for $K=2,3$, where it achieves super-convergence rates. 
This indicates that the SUPG-GFQ method is more effective in preserving the steady vortex structure and provides improved accuracy over the standard SUPG method.
\subsection{Low Mach Behavior: Steady isentropic vortex}
\renewcommand{\arraystretch}{0.93}
\setlength{\tabcolsep}{2pt}

\begin{table}
	\small
	\centering
	\caption{Convergence of the \textbf{steady isentropic vortex} with different Mach = $10^{-2},\,10^{-4},\,10^{-6}$ at $t_{\mathrm{end}} = 50\,\mathrm{s}$. 
		$L^2$ errors of the density and $\rho u$ and experimental order of accuracy (EOA)  
		using $\mathbb{Q}^1$, $\mathbb{Q}^2$, and $\mathbb{Q}^3$ elements for SUPG-Std and SUPG-GFQ, and a 2nd-order FV HLLC scheme.}\label{tab:low_mach_conv}
	\begin{tabular}{c|c|cc|cc|cc|cc|cc|cc}
		\hline
		{Mach}   & & \multicolumn{4}{c|}{Mach = $10^{-2}$} & \multicolumn{4}{c|}{Mach = $10^{-4}$} & \multicolumn{4}{c}{Mach = $10^{-6}$} \\ \hline
		{Variable} &  
		& \multicolumn{2}{c|}{$\rho$} 
		& \multicolumn{2}{c|}{$\rho u$} 
		& \multicolumn{2}{c|}{$\rho$} 
		& \multicolumn{2}{c|}{$\rho u$} 
		& \multicolumn{2}{c|}{$\rho$}  
		& \multicolumn{2}{c}{$\rho u$} \\
		\hline
		Element& $N_x$ & $L^2$ error & EOA &$L^2$ error & EOA &$L^2$ error & EOA &$L^2$ error & EOA & $L^2$ error & EOA & $L^2$ error & EOA \\
		\hline
		
		\multicolumn{14}{c}{\textbf{FV HLLC}} \\
		\hline
		2nd or.   	& 20    & 2.65E-05 & --   & 5.49E-01 & --   & 2.46E-07 & --   & 5.55E-01 & --   & 2.46E-09 & --   & 5.55E-01 & --   \\
		& 40    & 9.65E-06 & 1.46 & 2.17E-01 & 1.34 & 8.92E-08 & 1.46 & 2.30E-01 & 1.27 & 8.93E-10 & 1.46 & 2.30E-01 & 1.27 \\
		& 80    & 2.31E-06 & 2.06 & 4.62E-02 & 2.23 & 2.45E-08 & 1.86 & 5.69E-02 & 2.01 & 2.47E-10 & 1.86 & 5.71E-02 & 2.01 \\
		& 160 	& 5.30E-07 & 2.12 & 6.06E-03 & 2.93 & 4.96E-09 & 2.31 & 1.15E-02 & 2.31 & 5.03E-11 & 2.29 & 1.16E-02 & 2.30 \\ \hline
		
		\multicolumn{14}{c}{\textbf{SUPG-Std}} \\
		\hline
		$\mathbb{Q}^1$ & 20 & 5.51E-06 & --   & 1.49E-01 & --   & 4.34E-08 & --   & 1.46E-01 & --   & 4.34E-10 & --   & 1.46E-01 & --    \\
		& 40 & 9.06E-07 & 2.61 & 3.00E-02 & 2.31 & 6.66E-09 & 2.70 & 2.55E-02 & 2.52 & 6.66E-11 & 2.70 & 2.55E-02 & 2.52  \\
		& 80 & 1.34E-07 & 2.75 & 5.69E-03 & 2.40 & 8.62E-10 & 2.95 & 3.54E-03 & 2.85 & 8.62E-12 & 2.95 & 3.54E-03 & 2.85  \\
		& 160& 2.17E-08 & 2.63 & 1.23E-03 & 2.21 & 1.08E-10 & 3.00 & 5.09E-04 & 2.80 & 1.08E-12 & 3.00 & 5.09E-04 & 2.80  \\
		\hline
		$\mathbb{Q}^2$ & 10 & 5.36E-06 & --   & 9.85E-02 & --   & 4.70E-08  & --   & 9.73E-02 & --   & 4.58E-10 & --   & 9.45E-02 & --   \\
		& 20 & 1.26E-06 & 2.09 & 2.34E-02 & 2.07 & 1.16E-08  & 2.01 & 2.26E-02 & 2.11 & 1.18E-10 & 1.96 & 2.24E-02 & 2.08 \\
		& 40 & 2.94E-07 & 2.10 & 5.52E-03 & 2.08 & 2.79E-09  & 2.06 & 5.26E-03 & 2.10 & 2.90E-11 & 2.02 & 5.31E-03 & 2.08 \\
		& 80 & 6.17E-08 & 2.25 & 1.30E-03 & 2.09 & 5.66E-10  & 2.30 & 1.18E-03 & 2.16 & 6.13E-12 & 2.24 & 1.20E-03 & 2.14 \\
		\hline
		$\mathbb{Q}^3$ & 6  & 3.36E-06 & --   & 2.84E-02 & --   & 4.97E-09 & --   & 1.18E-02 & --   & 5.07E-11 & --   & 1.16E-02 & --   \\
		& 12 & 9.75E-07 & 1.78 & 8.15E-03 & 1.80 & 3.58E-09 & 0.48 & 7.82E-03 & 0.59 & 3.68E-11 & 0.46 & 7.74E-03 & 0.58 \\
		& 24 & 7.69E-08 & 3.67 & 9.17E-04 & 3.15 & 2.99E-10 & 3.58 & 8.69E-04 & 3.17 & 2.94E-12 & 3.65 & 8.37E-04 & 3.21 \\
		& 48 & 5.25E-09 & 3.87 & 5.80E-05 & 3.99 & 2.59E-11 & 3.53 & 7.90E-05 & 3.45 & 2.53E-13 & 3.54 & 7.40E-05 & 3.50 \\
		\hline
		\multicolumn{14}{c}{\textbf{SUPG-GFQ}} \\
		\hline
		$\mathbb{Q}^1$ & 20 & 3.01E-07 & --   & 2.65E-02 & --   & 1.72E-10 & --   & 9.43E-03 & --   & 1.71E-12 & --     & 9.43E-03 & --   \\
		& 40 & 8.78E-08 & 1.77 & 8.49E-03 & 1.64 & 4.69E-12 & 5.20 & 2.29E-03 & 2.04 & 4.79E-15 & 8.48   & 2.29E-03 & 2.04 \\
		& 80 & 2.27E-08 & 1.95 & 2.24E-03 & 1.92 & 1.16E-12 & 2.01 & 5.68E-04 & 2.01 & 1.11E-14 & -1.21  & 5.68E-04 & 2.01 \\
		& 160& 5.73E-09 & 1.99 & 5.67E-04 & 1.98 & 3.88E-13 & 1.59 & 1.42E-04 & 2.00 & 3.11E-14 & -1.49  & 1.42E-04 & 2.00 \\
		\hline
		$\mathbb{Q}^2$ & 10 & 4.99E-07  & --   & 3.56E-02 & --   & 4.82E-11 & --    & 3.77E-03 & --   & 2.06E-14 & --    & 3.75E-03 & --    \\
		& 20 & 4.99E-08  & 3.32 & 2.84E-03 & 3.65 & 1.94E-12 & 4.63  & 2.59E-04 & 3.86 & 7.22E-14 & -1.81 & 2.57E-04 & 3.86  \\
		& 40 & 3.17E-09  & 3.98 & 1.96E-04 & 3.86 & 4.47E-13 & 2.12  & 1.60E-05 & 3.99 & 2.92E-13 & -2.02 & 1.60E-05 & 3.99  \\
		& 80 & 1.93E-10  & 4.04 & 1.24E-05 & 3.98 & 1.99E-12 & -2.16 & 1.00E-06 & 3.99 & 1.11E-12 & -1.92 & 6.00E-06 & 1.37  \\
		\hline
		$\mathbb{Q}^3$ & 6  & 6.54E-07  & --   & 4.04E-02 & --   & 4.80E-11 & --    & 4.32E-03 & --    & 5.68E-14 & --    & 4.30E-03 & --    \\
		& 12 & 4.53E-08  & 3.85 & 6.84E-04 & 5.88 & 2.09E-12 & 4.52  & 1.42E-04 & 4.93  & 2.32E-13 & -2.03 & 1.41E-04 & 4.93  \\
		& 24 & 1.93E-09  & 4.55 & 1.03E-04 & 2.74 & 1.08E-12 & 0.95  & 5.02E-06 & 4.82  & 8.74E-13 & -1.91 & 5.02E-06 & 4.77  \\
		& 48 & 6.12E-11  & 4.98 & 4.00E-06 & 4.76 & 4.38E-12 & -2.02 & 2.42E-07 & 4.38  & 3.33E-12 & -1.93 & 8.88E-06 & -0.78 \\
		\hline
	\end{tabular}
	
\end{table}
We investigate the  behavior of the proposed SUPG-GFQ method for the steady isentropic vortex test case at low Mach numbers. 
The maximum Mach is related to the vortex strength $\varepsilon$ as 
$$
\varepsilon = 2\pi M \sqrt{\frac{\gamma}{1 + \frac{1}{2}(\gamma - 1)M^2}}$$
\noindent
The previous value  of $\varepsilon=5$ corresponds to a maximum Mach of about 0.7.  
%
%
%
We consider now long time runs, and compare the errors for   $t_{\mathrm{end}}=50$s.
In Table~\ref{tab:low_mach_conv}, we present the  $L^2$ errors and experimental orders of accuracy for the density $\rho$ and the momentum $\rho u$ at Mach numbers of $10^{-2}$, $10^{-4}$, and $10^{-6}$.
We compare the results with those obtained using the standard SUPG method, the SUPG-GFQ and a second-order finite volume HLLC scheme. 

Observing the errors for second order methods ($\mathbb Q^1$ FEM and FV HLLC), we see that the error of the SUPG-Std is already systematically smaller than the FV HLLC one, of around a factor 4. The SUPG-GFQ is again much better than the SUPG-Std, we can reach errors even 10 times smaller than the SUPG-Std. In particular, for Mach = $10^{-6}$ the density error of the SUPG-GFQ hits machine precision for very coarse meshes.
For $\mathbb Q^1$, we see second order of accuracy for all methods, but for $\mathbb Q^2$ and $\mathbb Q^3$ the SUPG-GFQ shows the theoretical super-convergence rates. Indeed, when significantly higher than machine precision, the SUPG-GFQ shows sharp convergence rates of order $K+2$.
In Figure~\ref{fig:vortexeuler_all}, we report the  relative  $x^2$-momentum errors   to visually show the accuracy enhancements 
of the SUPG-GFQ compared to the SUPG-Std and the FV HLLC, not only in terms of slopes but also error magnitude.  Note that where the  slopes become flat the absolute error
has reached values close to machine accuracy.

\begin{figure}
	\centering
	\begin{minipage}{0.49\textwidth}
		\centering
		Second order methods\\
		Mach = $10^{-2}$\\	
		\includegraphics[width=\textwidth, trim={142 50 0 0},clip]{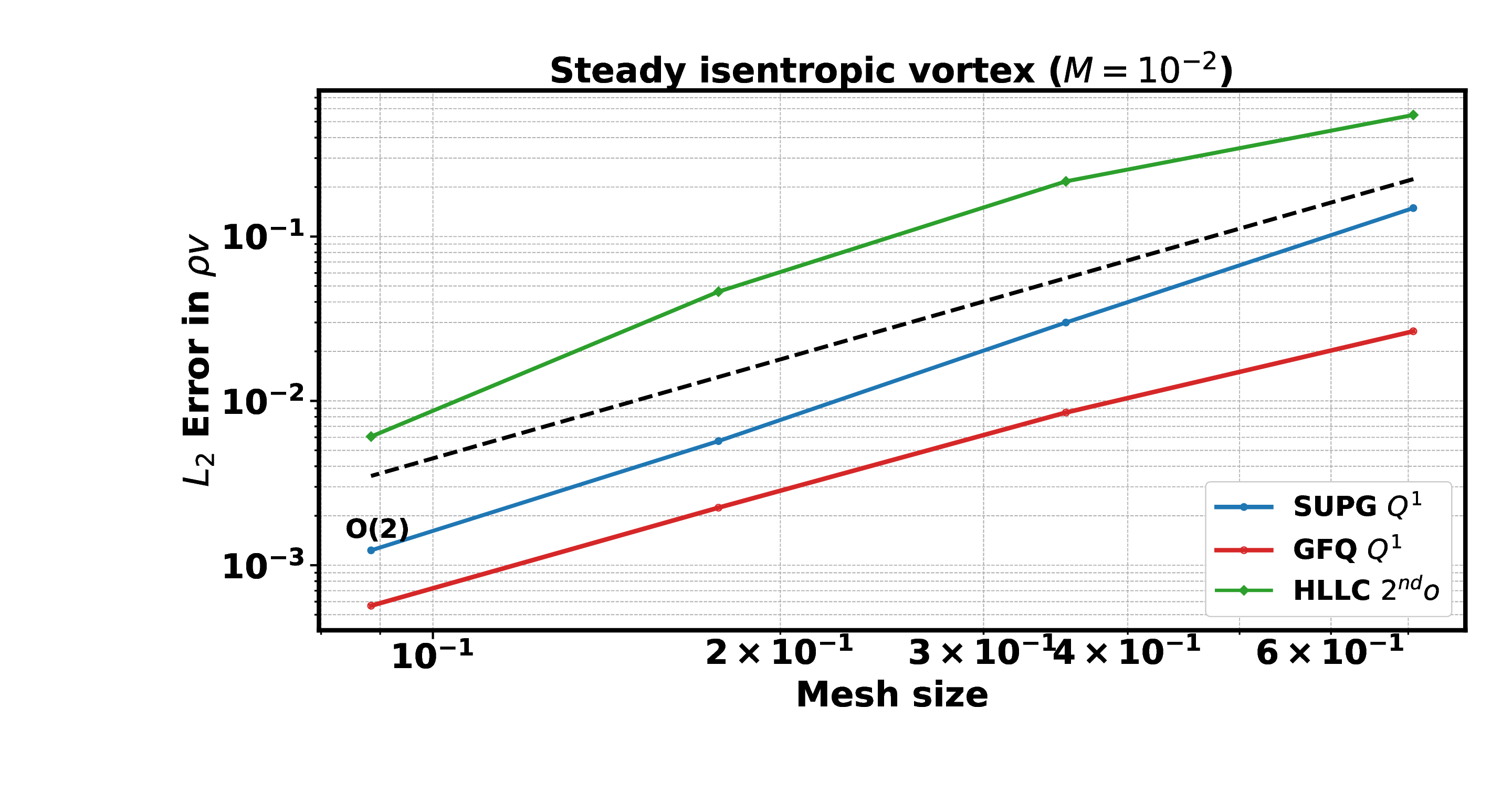}\\
		Mach = $10^{-4}$\\
		\includegraphics[width=\textwidth, trim={142 50 0 0},clip]{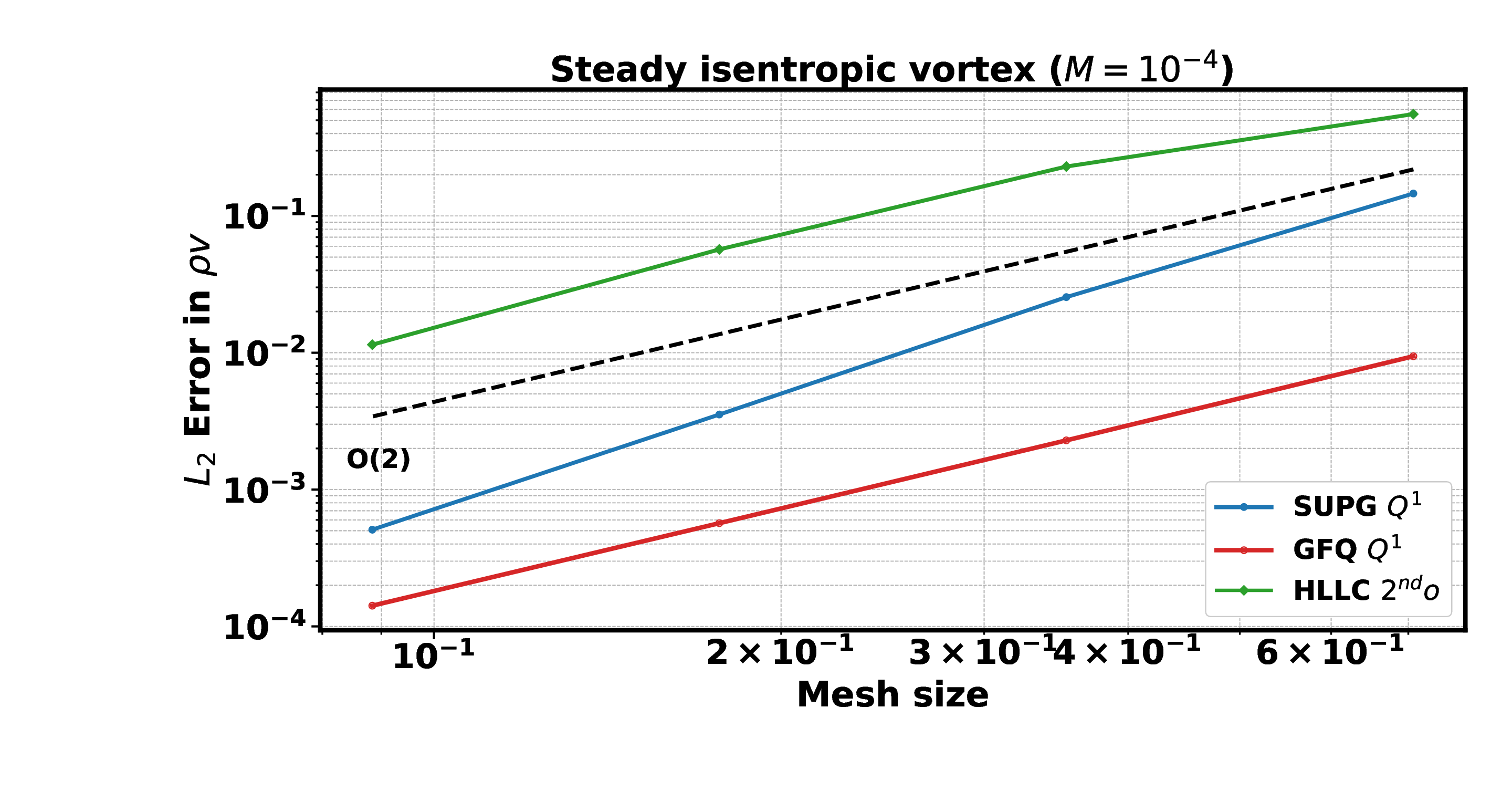}\\
		Mach = $10^{-6}$\\
		\includegraphics[width=\textwidth, trim={142 50 0 0},clip]{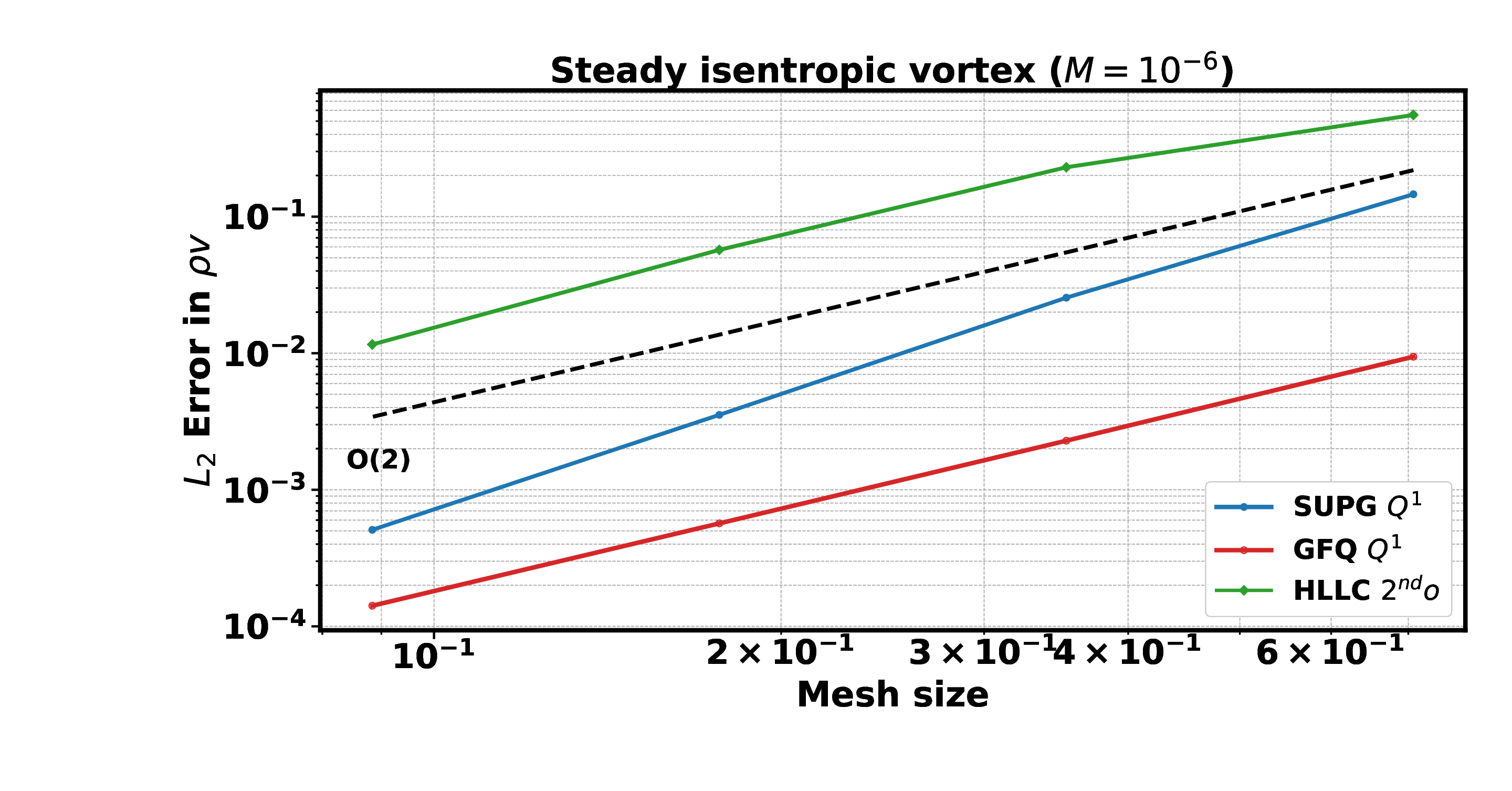}
	\end{minipage}\hfill
	\begin{minipage}{0.49\textwidth}
		\centering
		High-order methods\\
		Mach = $10^{-2}$\\	
		\includegraphics[width=\textwidth, trim={142 50 0 0},clip]{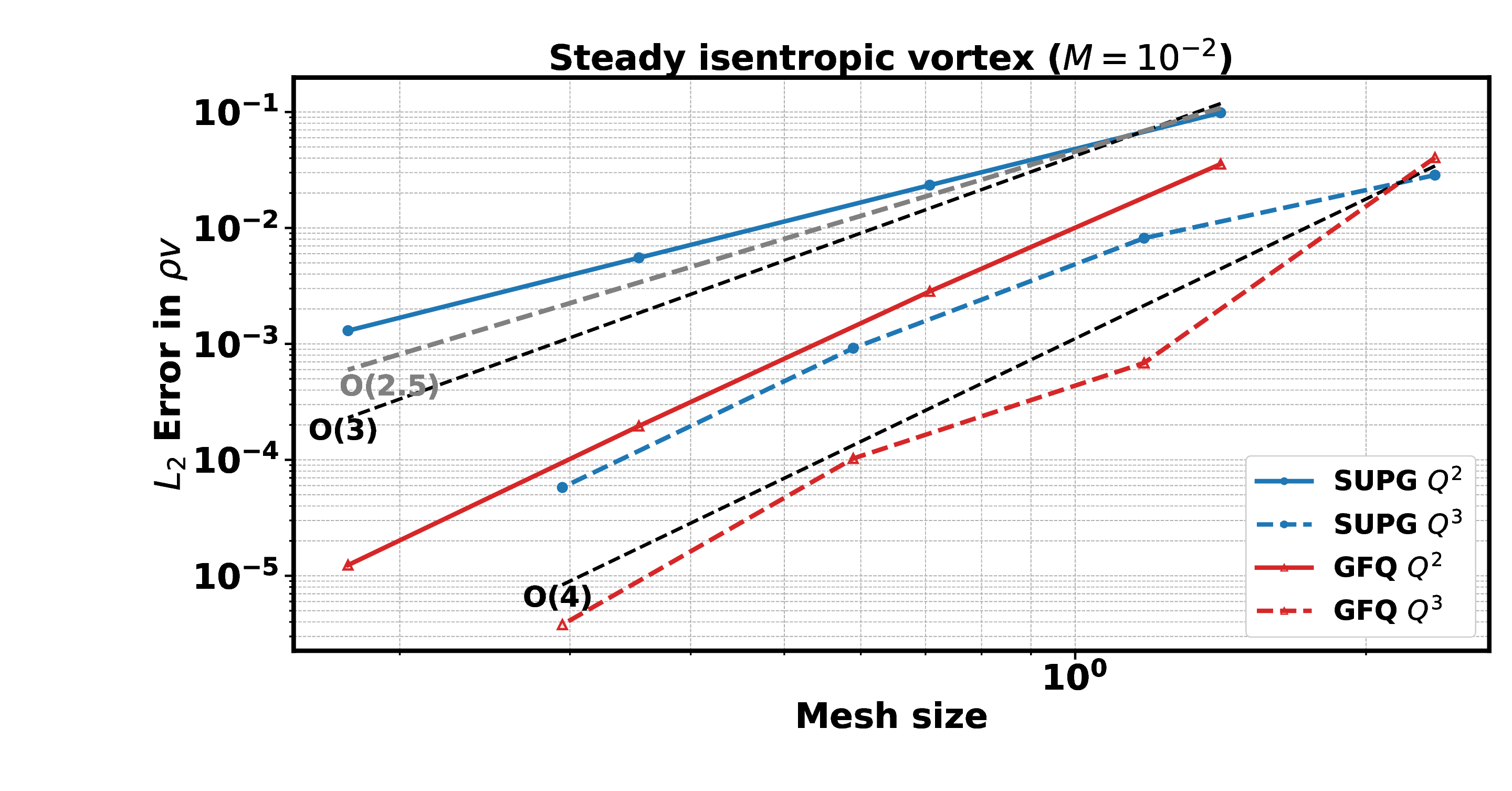}\\
		Mach = $10^{-4}$\\
		\includegraphics[width=\textwidth, trim={142 50 0 0},clip]{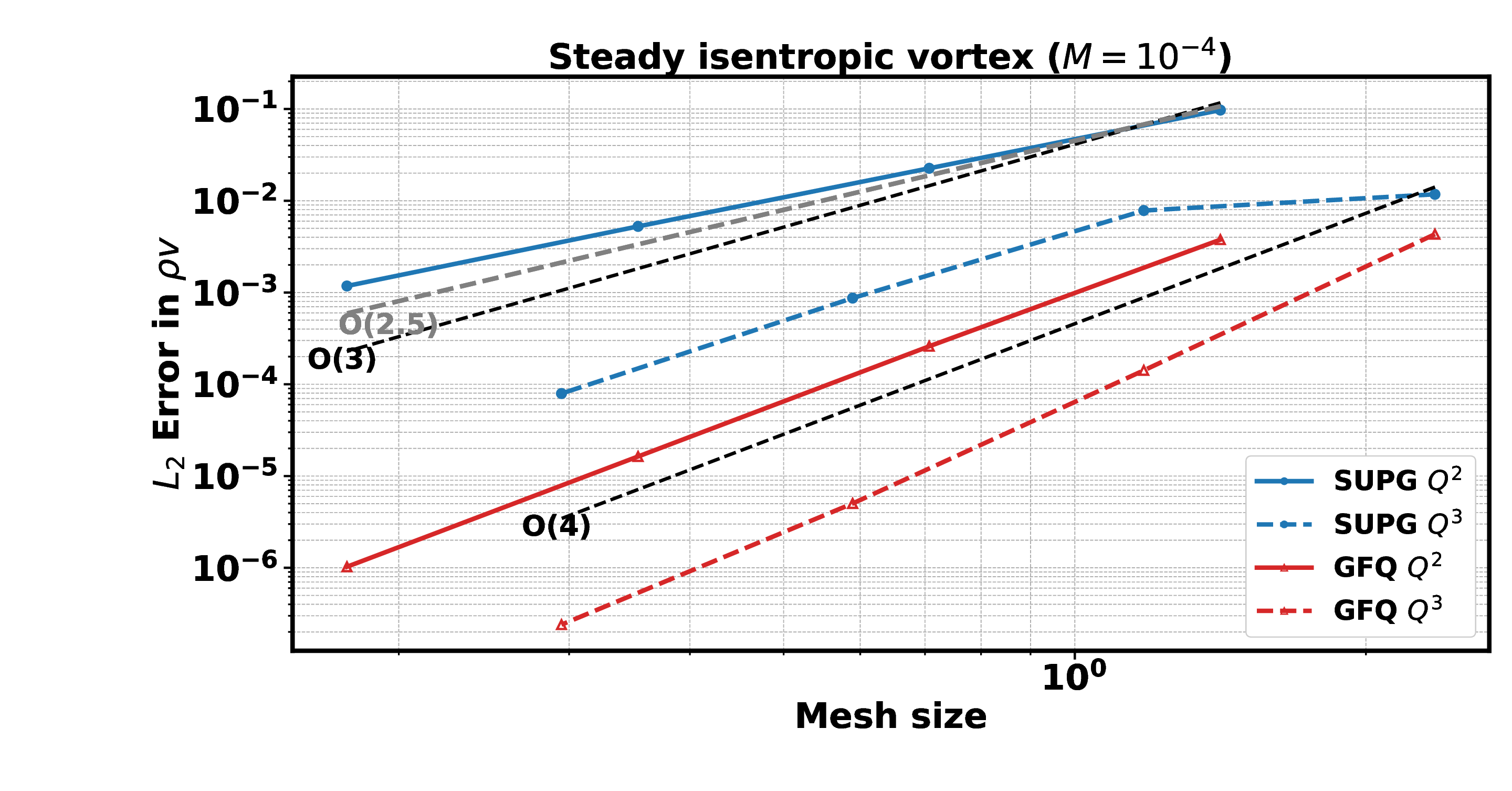}\\
		Mach = $10^{-6}$\\
		\includegraphics[width=\textwidth, trim={142 50 0 0},clip]{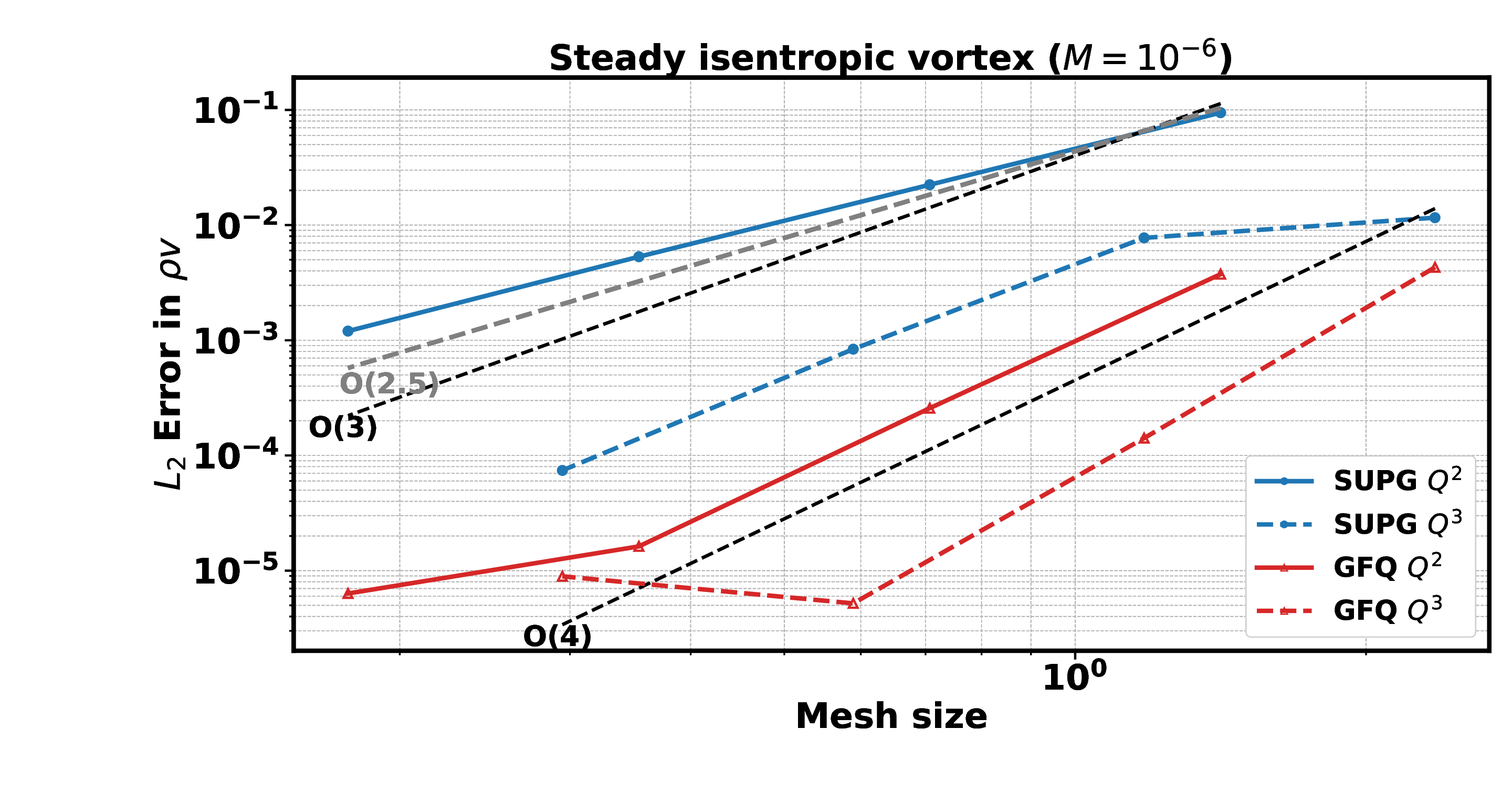}
		
	\end{minipage}

\caption{Convergence of the relative error on  $\rho v$ for $\mathbb Q^1$ SUPG-Std and SUPG-GFQ and for FV-HLLC (left) and $\mathbb Q^2$ and $\mathbb Q^3$ (right) at time 50 for the steady isentropic vortex test case with Mach numbers $M = 10^{-2}$ (\textbf{top}), $M = 10^{-4}$ (\textbf{center}) and $M = 10^{-6}$ (\textbf{bottom}).}
\label{fig:vortexeuler_all}

\end{figure}

		\begin{figure}
			
			\centering
			
			\noindent
			Mach = $10^{-2}$ \\
			\begin{minipage}[t]{0.9\textwidth}
				\centering
				\begin{subfigure}[b]{0.328\textwidth}
					\centering
					\includegraphics[width=\textwidth]{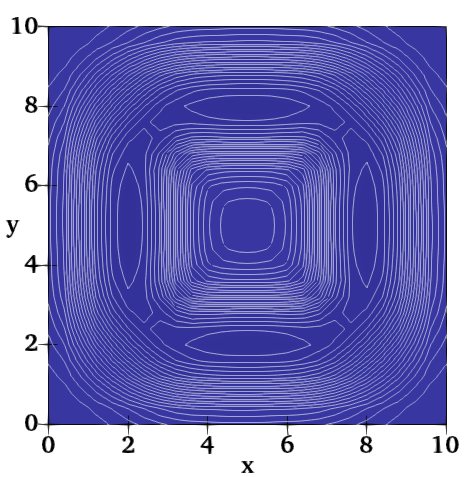}
					\caption{HLLC (2nd)}
				\end{subfigure}
				\hfill
				\begin{subfigure}[b]{0.328\textwidth}
					\centering
					\includegraphics[width=\textwidth]{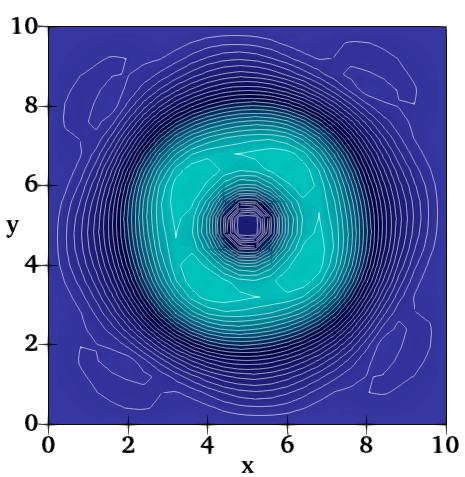}
					\caption{SUPG-Std}
				\end{subfigure}
				\hfill
				\begin{subfigure}[b]{0.328\textwidth}
					\centering
					\includegraphics[width=\textwidth]{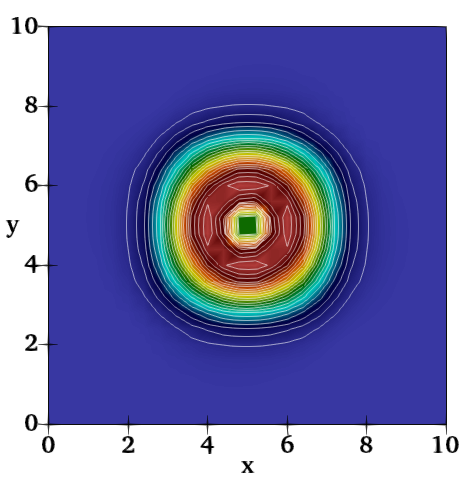}
					\caption{SUPG-GFQ}
				\end{subfigure}
			\end{minipage}
			\hfill
			\begin{minipage}[t]{0.09\textwidth}
				\vspace{-4.87cm}
				\includegraphics[width=\textwidth]{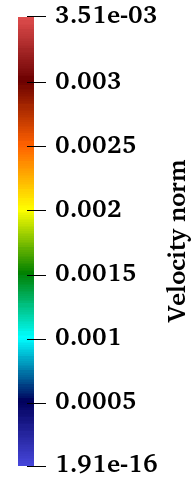}
			\end{minipage}
			
			\vspace{0.2cm}
			
			\noindent
			Mach = $10^{-4}$ \\
			\begin{minipage}[t]{0.9\textwidth}
				\centering
				\begin{subfigure}[b]{0.328\textwidth}
					\centering
					\includegraphics[width=\textwidth]{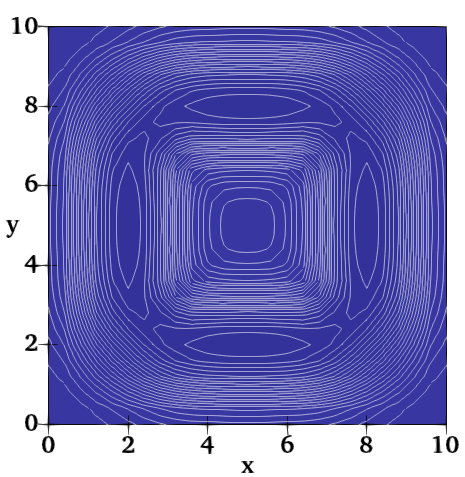}
					\caption{HLLC (2nd)}
				\end{subfigure}
				\hfill
				\begin{subfigure}[b]{0.328\textwidth}
					\centering
					\includegraphics[width=\textwidth]{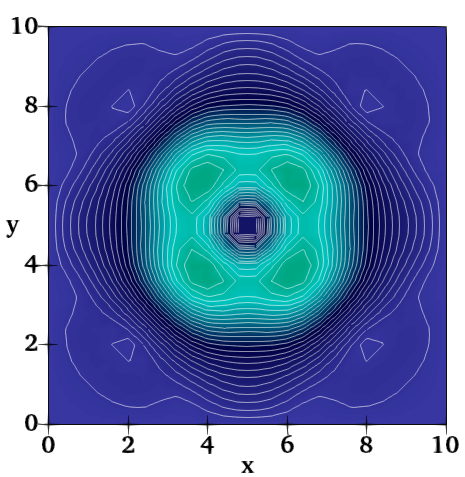}
					\caption{SUPG-Std}
				\end{subfigure}
				\hfill
				\begin{subfigure}[b]{0.328\textwidth}
					\centering
					\includegraphics[width=\textwidth]{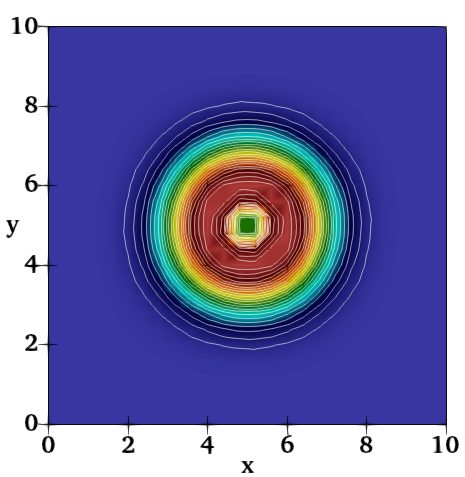}
					\caption{SUPG-GFQ}
				\end{subfigure}
			\end{minipage}
			\hfill
			\begin{minipage}[t]{0.09\textwidth}
				\vspace{-4.87cm}
				\includegraphics[width=\textwidth]{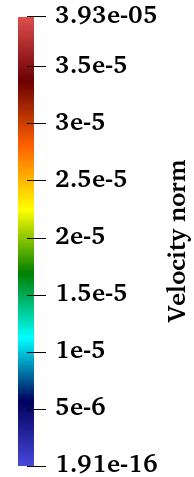}
			\end{minipage}
			
			\vspace{0.2cm}
			
			\noindent
			Mach = $10^{-6}$ \\
			\begin{minipage}[t]{0.9\textwidth}
				\centering
				\begin{subfigure}[b]{0.328\textwidth}
					\centering
					\includegraphics[width=\textwidth]{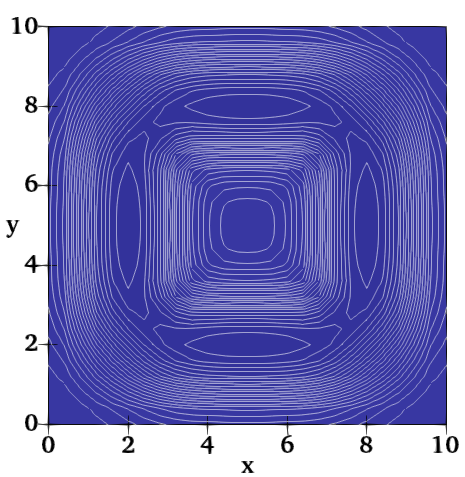}
					\caption{HLLC (2nd)}
				\end{subfigure}
				\hfill
				\begin{subfigure}[b]{0.328\textwidth}
					\centering
					\includegraphics[width=\textwidth]{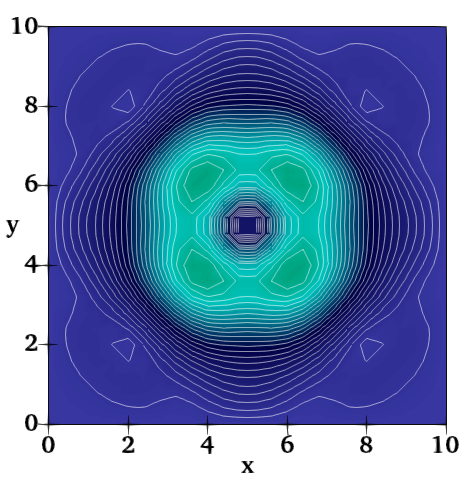}
					\caption{SUPG-Std}
				\end{subfigure}
				\hfill
				\begin{subfigure}[b]{0.328\textwidth}
					\centering
					\includegraphics[width=\textwidth]{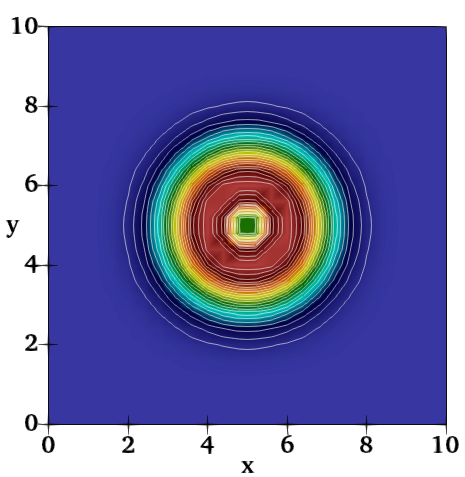}
					\caption{SUPG-GFQ}
				\end{subfigure}
			\end{minipage}
			\hfill
			\begin{minipage}[t]{0.09\textwidth}
				\vspace{-4.87cm}
				\includegraphics[width=\textwidth]{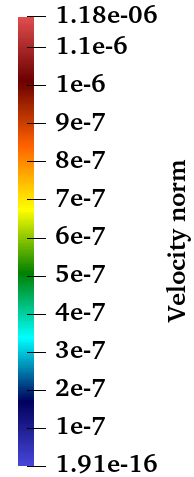}
			\end{minipage}
			
			\caption{
				Low-Mach-number stationary isentropic vortex.
				Isocontours of the velocity magnitude at the long-time limit 
				($t_{\mathrm{end}} = 2000\,\mathrm{s}$), 
				obtained with the second-order HLLC scheme (\textbf{left}), 
				SUPG-Std with $Q_1$ elements (\textbf{middle}), 
				and SUPG-GFQ (\textbf{right}).
				\textbf{Top}: $\mathrm{Ma} = 10^{-2}$.
				\textbf{Middle}: $\mathrm{Ma} = 10^{-4}$.
				\textbf{Bottom}: $\mathrm{Ma} = 10^{-6}$.
				Computations performed on a $25 \times 25$ mesh.
			}
			
			\label{fig:long_time_norm_vortex}
		\end{figure}
		
		Finally, we run a long time simulation for the isentropic vortex test case, up to $t_{\mathrm{end}} = 2000\,\mathrm{s}$, to investigate the long-term behaviour of the numerical schemes. The results are presented in Figure \ref{fig:long_time_norm_vortex}, where we observe that the second-order HLLC scheme introduces significant numerical diffusion, leading to a constant density and zero velocity. In contrast, the SUPG-Std keeps more structures even at long times. This is an improvement brought by the  nice structure of the streamline upwind
		dissipation which has, in the linear acoustic limit, the correct grad-div form. However, the lack of a discrete kernel leads to the dissipation of the 
		initial condition as one can see from the figure. 
		The SUPG-GFQ  preserves nicely the initial structure, with perfectly stable and converging  results, with no spurious modes.

\subsection{Kelvin--Helmholtz instability}
We consider now a first case involving a hydrodynamic instability. As we will see, 
the stationarity preserving quadrature developed in this work brings significant enhancements in the resolution of 
such instabilities when they evolve starting from a quasi-stationary state.  In this section, in particular we consider the formation and evolution
of a  Kelvin–Helmholtz instability. We consider a setup  often used to evaluate the behaviour of the numerical dissipation in the low Mach regime \cite{leidi2024performance}.

\begin{figure}[t]

\begin{minipage}[t]{0.86\textwidth}
\centering

\begin{subfigure}[t]{0.32\textwidth}
	\centering
	\includegraphics[width=\textwidth, trim={15 40 35 40}, clip]{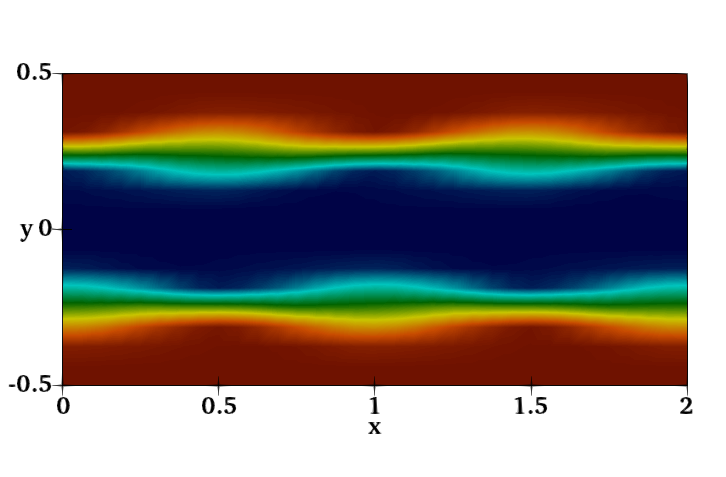}
	\vspace{-0.6cm}
	\caption{HLLC (2nd), $32 \times 16$}
	\includegraphics[width=\textwidth, trim={15 40 35 40}, clip]{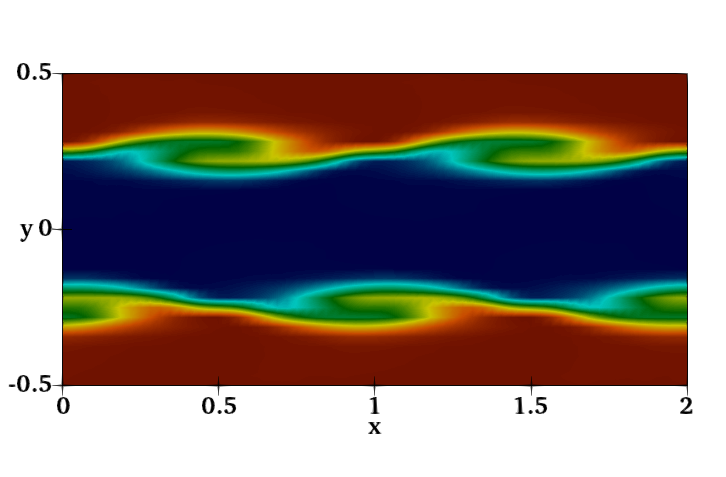}
	\vspace{-0.6cm}
	\caption{HLLC (2nd), $64 \times 32$}
	\includegraphics[width=\textwidth, trim={15 40 35 40}, clip]{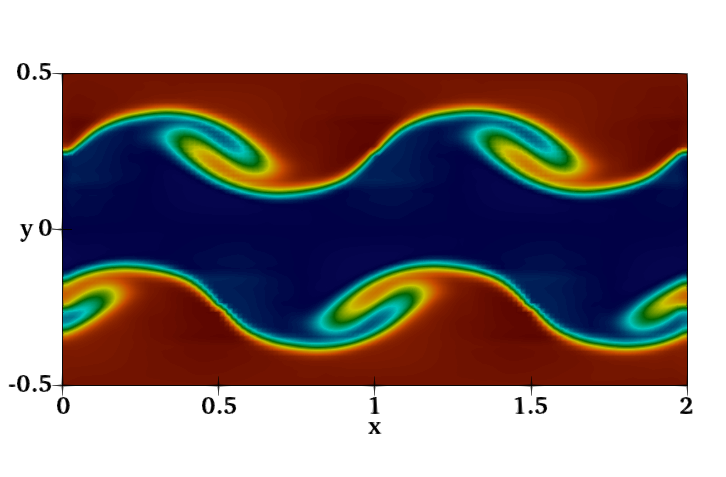}
	\vspace{-0.6cm}
	\caption{HLLC (2nd), $128 \times 64$}
	\includegraphics[width=\textwidth, trim={15 40 35 40}, clip]{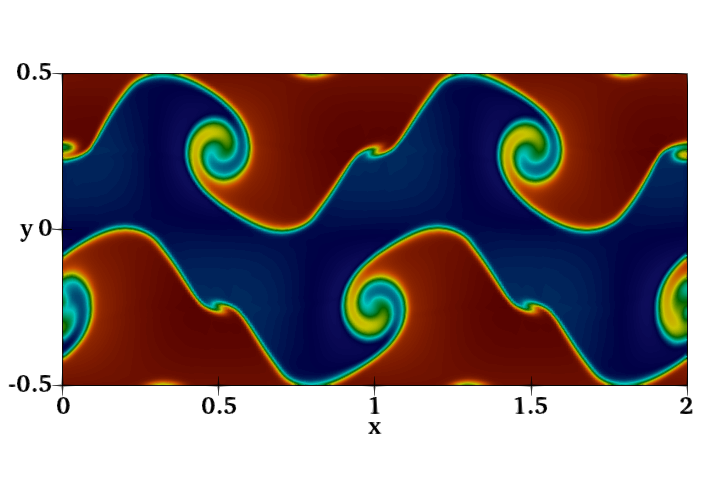}
	\vspace{-0.6cm}
	\caption{HLLC (2nd), $256 \times 128$}
\end{subfigure}
\hfill
\begin{subfigure}[t]{0.32\textwidth}
	\centering
	
	\includegraphics[width=\textwidth, trim={15 40 35 40}, clip]{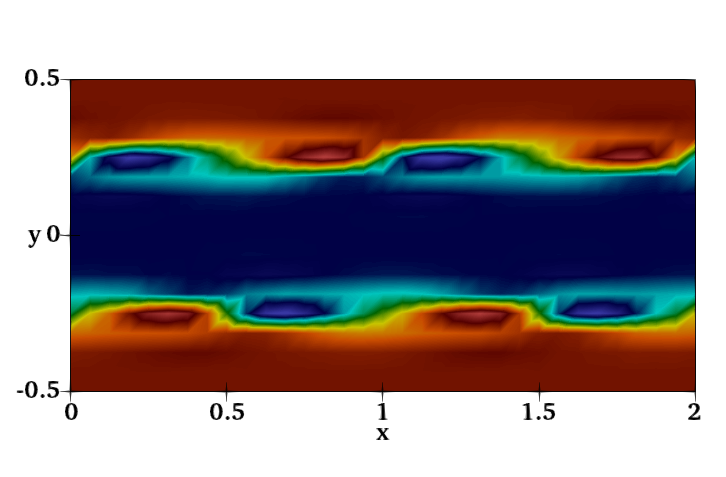}
	\vspace{-0.6cm}
	\caption{SUPG-Std, $32 \times 16$}
	
	\includegraphics[width=\textwidth, trim={15 40 35 40}, clip]{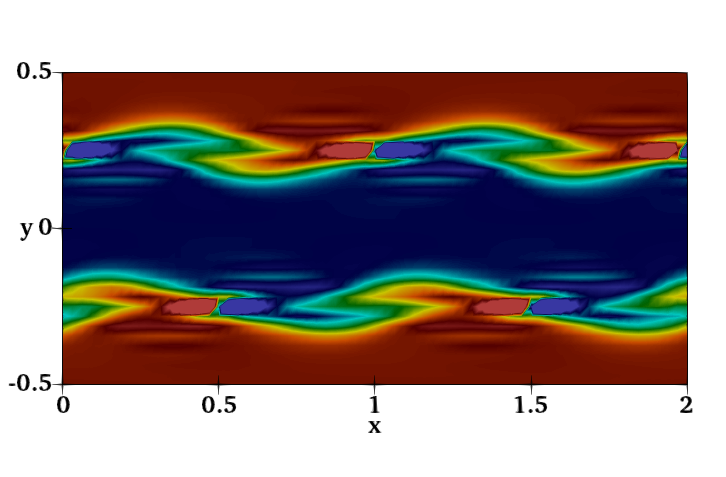}
	\vspace{-0.6cm}
	\caption{SUPG-Std, $64 \times 32$}
	
	\includegraphics[width=\textwidth, trim={15 40 35 40}, clip]{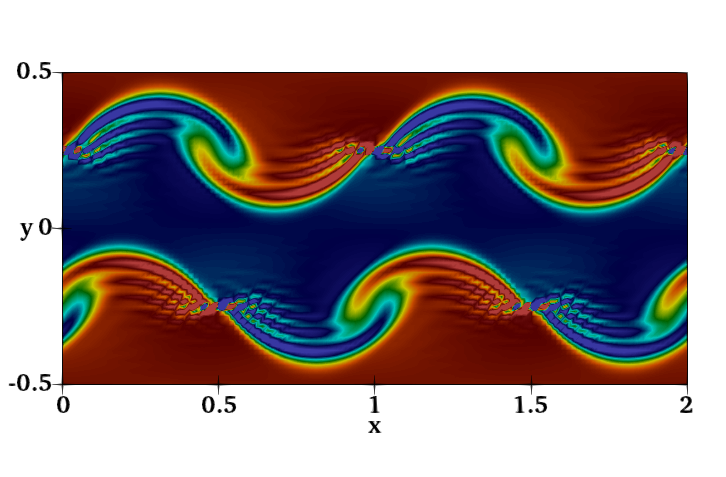}
	\vspace{-0.6cm}
	\caption{SUPG-Std, $128 \times 64$}
	
	\includegraphics[width=\textwidth, trim={15 40 35 40}, clip]{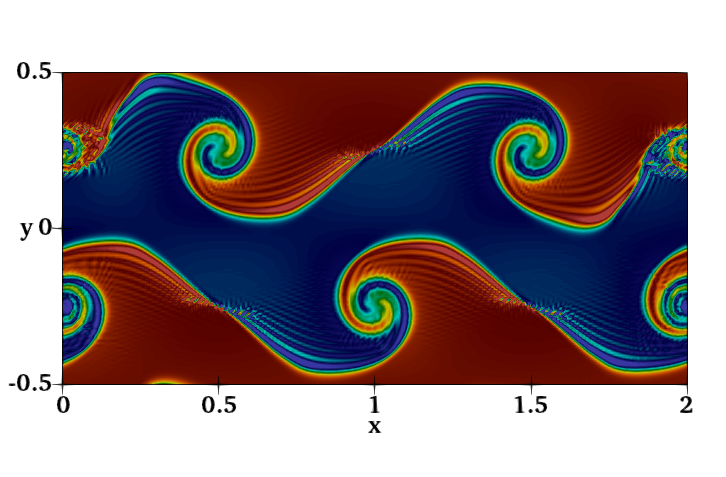}
	\vspace{-0.6cm}
	\caption{SUPG-Std, $256 \times 128$}
	
\end{subfigure}
\hfill
\begin{subfigure}[t]{0.32\textwidth}
	\centering
	
	\includegraphics[width=\textwidth, trim={15 40 35 40}, clip]{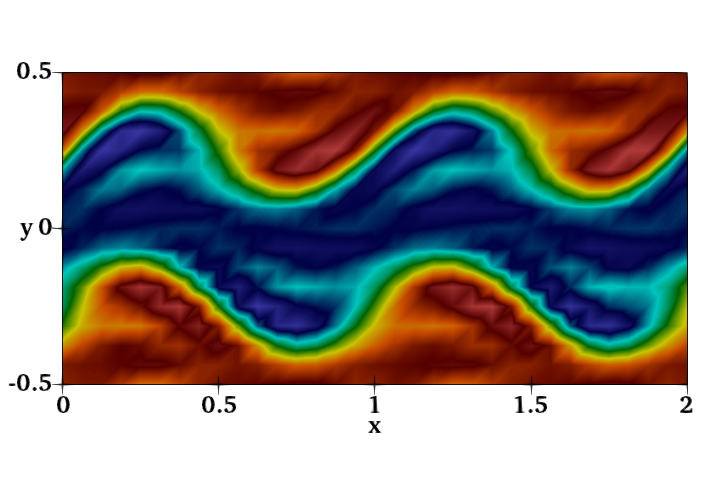}
	\vspace{-0.6cm}
	\caption{SUPG-GFQ, $32 \times 16$}
	
	\includegraphics[width=\textwidth, trim={15 40 35 40}, clip]{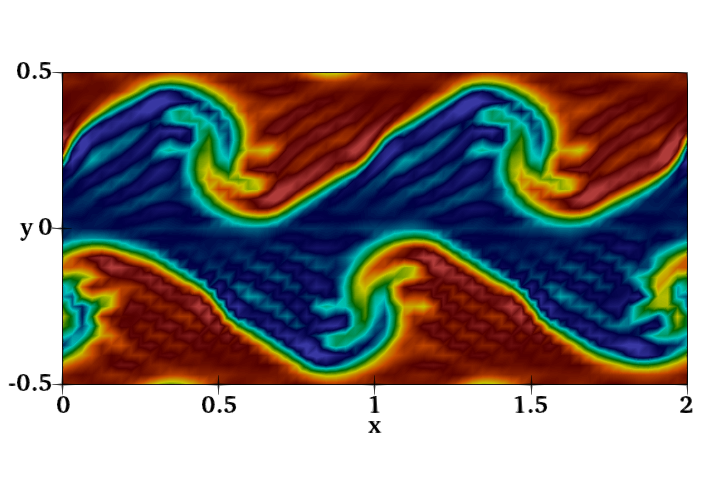}
	\vspace{-0.6cm}
	\caption{SUPG-GFQ, $64 \times 32$}
	
	\includegraphics[width=\textwidth, trim={15 40 35 40}, clip]{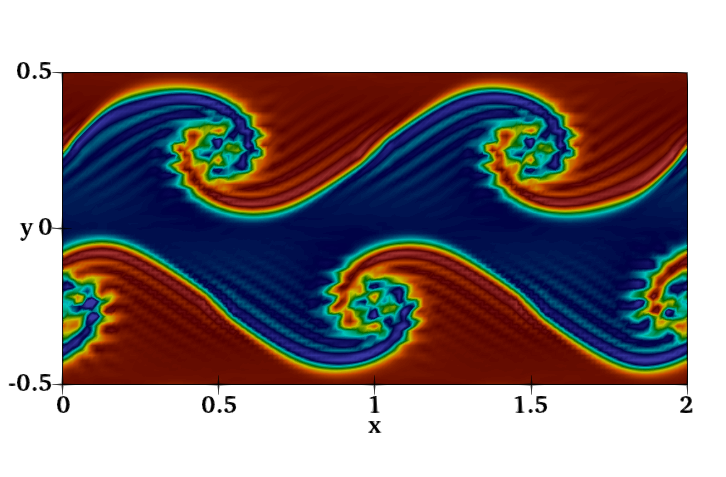}
	\vspace{-0.6cm}
	\caption{SUPG-GFQ, $128 \times 64$}
	
	\includegraphics[width=\textwidth, trim={15 40 35 40}, clip]{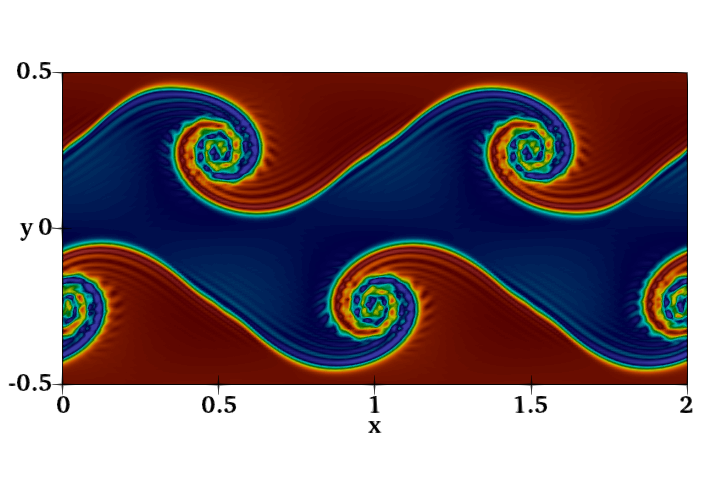}
	\vspace{-0.6cm}
	\caption{SUPG-GFQ, $256 \times 128$}
	
\end{subfigure}

\end{minipage}
\captionof{figure}{Kelvin–Helmholtz instability. Density field at final time 
computed with $\mathbb{Q}^1$ elements on successively refined meshes. 
\textbf{Left column:} second-order HLLC scheme. 
\textbf{Middle column:} standard SUPG method. 
\textbf{Right column:} SUPG-GFQ method.
}
\label{fig:all_KHSUPG_GFQ}
\hfill
\begin{minipage}[t][-16.4cm][c]{0.125\textwidth}
\includegraphics[width=1.15\textwidth, , trim={20 0 60 0},clip]{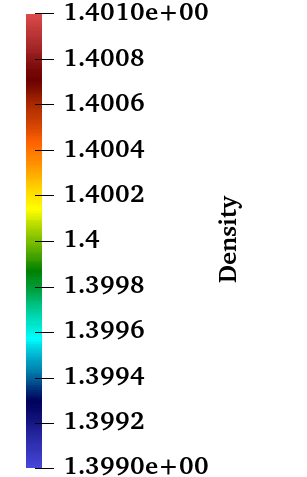}
\end{minipage}
\end{figure}

\begin{figure}[t]
\begin{minipage}{0.86\textwidth}
	\centering $t=60$s\\[5pt]
	\begin{minipage}{0.325\textwidth}
		\centering HLLC-FV\\
		\includegraphics[width=\textwidth, trim={0 40 0 40}, clip]{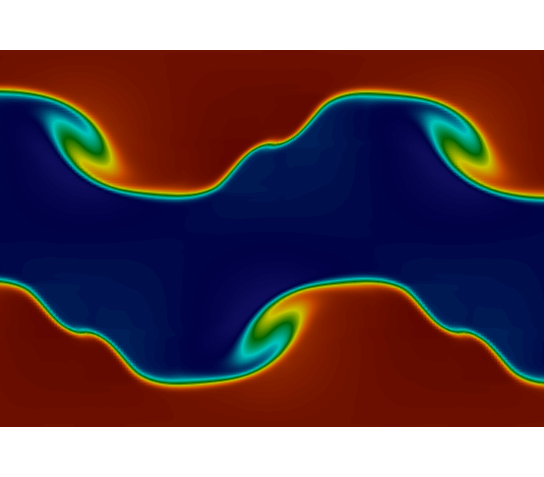}
	\end{minipage}\,
	\begin{minipage}{0.325\textwidth}
		\centering SUPG-Std\\
		\includegraphics[width=\textwidth, trim={0 40 0 40}, clip]{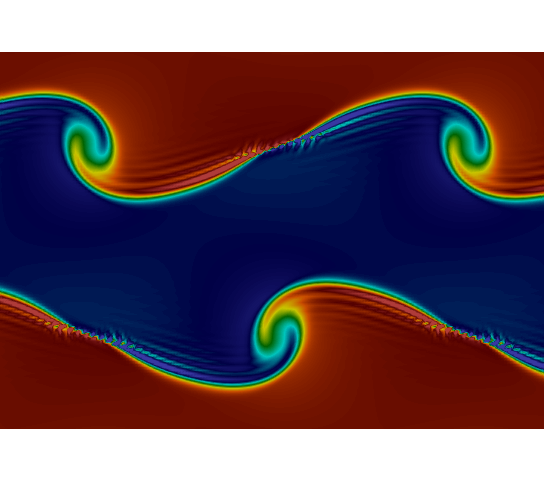}
	\end{minipage}\,
	\begin{minipage}{0.325\textwidth}
		\centering SUPG-GFQ\\
		\includegraphics[width=\textwidth, trim={0 40 0 40}, clip]{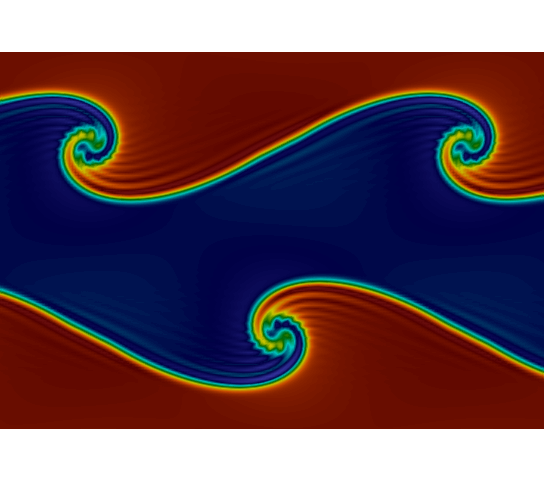}
	\end{minipage}\\[2mm]
	\centering $t=80$s\\[5pt]
	\begin{minipage}{0.325\textwidth}
		\centering HLLC-FV\\
		\includegraphics[width=\textwidth, trim={0 40 0 40}, clip]{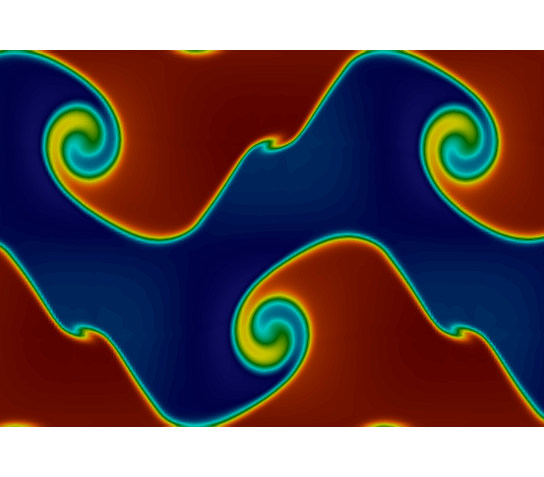}
	\end{minipage}\,
	\begin{minipage}{0.325\textwidth}
		\centering SUPG-Std\\
		\includegraphics[width=\textwidth, trim={0 40 0 40}, clip]{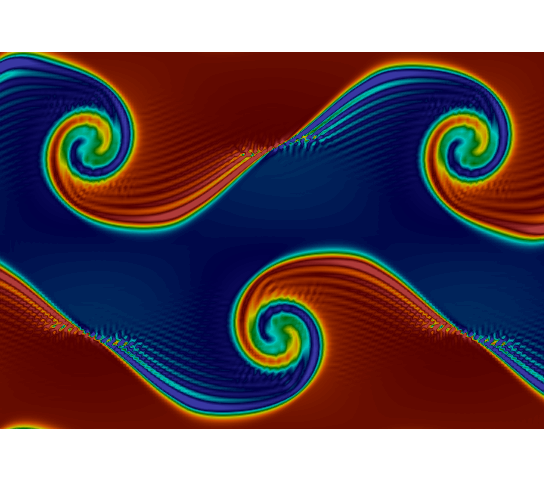}
	\end{minipage}\,
	\begin{minipage}{0.325\textwidth}
		\centering SUPG-GFQ\\
		\includegraphics[width=\textwidth, trim={0 40 0 40}, clip]{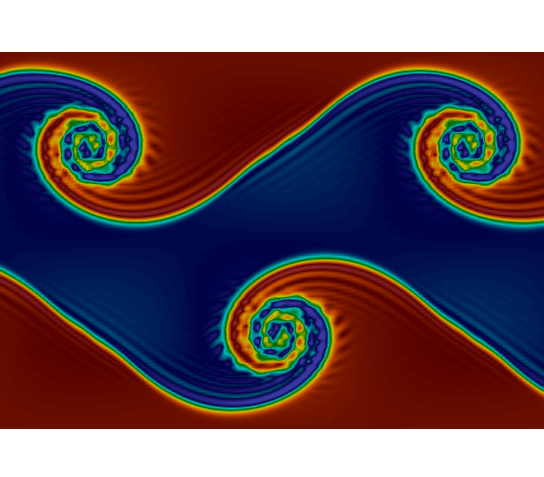}
	\end{minipage}
\end{minipage}
\hfill
\begin{minipage}{0.125\textwidth}
	\centering
	\vspace{0.9cm}	
	\includegraphics[width=1.15\textwidth, trim={20 0 60 0},clip]{./Color_bar_2-KH.png}
\end{minipage}

\caption{Evolution of the Kelvin-Helmholtz instability at different time instants ($t=60$ s top, $t=80$ s bottom), computed with HLLC-FV (\textbf{left}), the SUPG-Std method (\textbf{center}), and the SUPG-GFQ method (\textbf{right}) on a $256 \times 128$ grid.}
\label{fig:KH_evolution_GF}
\end{figure}

The simulations are performed in the domain $[0,2]\times[-1/2,1/2]$ up to a final time $t_{\mathrm{end}} = 80\mathrm{s}$ with periodic boundary conditions. The initial condition is defined as follows:
\begin{equation}
	\begin{aligned}
		\rho &= \gamma + K(y)\,r, \\
		u &= M\,K(y), \\
		v &= \delta\,M\,\sin(2\pi x), \\
		p &= 1.
	\end{aligned}
\end{equation}
where the parameters are set to $M=10^{-2}$, $r=10^{-3}$, and $\delta=0.1$.  The function $K(y)$ introduces a layered shear profile and is defined as:
\begin{equation}
	K(y)=
	\begin{cases}
		-\sin\left(\dfrac{\pi}{\omega}\left(y+\dfrac{1}{4}\right)\right), 
		& \text{if } -\dfrac{1}{4}-\dfrac{\omega}{2}\le y< -\dfrac{1}{4}+\dfrac{\omega}{2}, \\[1.5ex]
		-1, 
		& \text{if } -\dfrac{1}{4}+\dfrac{\omega}{2}\le y< \dfrac{1}{4}-\dfrac{\omega}{2}, \\[1.5ex]
		\sin\left(\dfrac{\pi}{\omega}\left(y-\dfrac{1}{4}\right)\right), 
		& \text{if } \dfrac{1}{4}-\dfrac{\omega}{2}\le y< \dfrac{1}{4}+\dfrac{\omega}{2}, \\[1.5ex]
		1, 
		& \text{otherwise},
	\end{cases}
\end{equation}
with $\omega=1/16$.  This is an interesting case because the  shear flow developing from  this initial state remains smooth  and stable for
relatively short times so that numerical schemes converge under mesh refinement, and a reference solution exists, to which we refer for comparison \cite{leidi2024performance}.

Figure~\ref{fig:all_KHSUPG_GFQ} shows the numerical results obtained with three second order methods: the HLLC-FV, the $\mathbb{Q}^1$ SUPG-Std and the $\mathbb{Q}^1$ SUPG-GFQ. The computations are carried out on the meshes $32\times 16$, $64 \times 32$, $128 \times 64$ and $256 \times 128$.

The HLLC-FV and SUPG-Std fail to accurately capture the flow structures generated by the instability up to the mesh $64\times 32$. 
Vortex formation becomes noticeable only on a moderately refined mesh ($128\times 64$). In this case, the higher-order accuracy partially mitigates 
excessive diffusion at this Mach number and simulation time. 
However, the resulting structures remain diffused and would require additional resolution to resolve the vortical features in detail.
Also, both the HLLC-FV and SUPG-Std show  the appearance of a spurious glitch  in between two subsequent vortices, in correspondence of   which the contact aligns itself to the mesh.

In contrast, the GFQ-based method successfully reproduces the main flow structures with high accuracy and extremely small mesh imprinting.
 Even on the coarsest mesh, fluid patterns begin to emerge and develop. A few spurious vortices are observed, which are well-known artefacts present in most numerical methods; see \cite{brown1995performance}. As the resolution increases, the flow features converge toward the reference solution reported in previous works~\cite{leidi2024performance,barsukow2025genuinely}. A comparison with low Mach-compliant schemes studied in~\cite{leidi2024performance} indicates that the GFQ method achieves comparable, if not superior, performance.

In Figure~\ref{fig:KH_evolution_GF}, we show the temporal evolution of the instability for the SUPG-GFQ, SUPG-Std, and HLLC-FV methods on the finest mesh. The GFQ method captures the flow structures from the early stages of the instability, while the other two methods fail to do so until later times. The HLLC-FV method exhibits significant diffusion, which is evident in the smoothed-out vortical structures at $t=80\ \text{s}$.

These results confirm  the enhancements at low Mach number  discussed in Remark 10.

\subsection{Isothermal hydrostatic equilibria and their perturbations} 

We now  consider the effects of the gravitational source.
We start by considering the approximation of an isothermal hydrostatic equilibrium,
and compare  the  SUPG-Std method, the SUPG-GFQ, and the exactly well-balanced SUPG-GFQ method,  obtained with the modification  proposed in Section~\ref{sec:hydro_equi}.
To this end,  on   the domain $\Omega = [0,1]\times[0,1]$ we consider the exact  equilibrium \cite{berberich2016general,xing2013high,chandrashekar2015second}. 
\begin{equation}
\rho(x,y)=\bar{\rho}\,\exp\!\left(-\frac{\bar{\rho}\,\phi(x,y)}{\bar{p}}\right),\qquad
p(x,y)=\bar{p}\,\exp\!\left(-\frac{\bar{\rho}\,\phi(x,y)}{\bar{p}}\right),\qand
\mathbf{u}(x,y)=\mathbf{0},
\end{equation}
with $\phi(x,y)=x+y$, $\bar{\rho}=1.21$, and $\bar{p}=1$. We set Dirichlet boundary data coincident with the exact equilibrium. 
The solution is initialized interpolating the analytical steady state and evolved until $t_{\mathrm{end}}=1\,\mathrm{s}$.  In Table~\ref{tab:isothermal_equilibrium_q1style}, we report the $\mathrm{L}^1$ errors with 
respect to the exact equilibrium of the SUPG-GFQ (WB) method for different polynomial degrees and mesh sizes.
We can see that the stationarity preserving method with the correction   preserves the equilibrium up to machine precision, as expected.
Interestingly, for $K\ge 2$  the stationarity preserving approach without any correction provides  error reductions  of more than 2 orders of magnitude.  

	\begin{table}[htbp]
	\centering
	\begin{threeparttable}
		\caption{$L^1$ errors for the isothermal hydrostatic equilibrium at final time for the three variants of the SUPG scheme: GFQ (well-balanced), GF (non well-balanced), and Std (non well-balanced).}
		\label{tab:isothermal_equilibrium_q1style}
		\small
		\begin{tabular}{c|c|c| c|c|c|c}
			\toprule
			Element & $N_x=N_y$ & Method & $\rho$ & $u$ & $v$ & $p$ \\
			\midrule
			
			\multirow{6}{*}{$\mathbb{Q}^1$}
			& \multirow{3}{*}{40}
			& GFQ (WB)          & 2.528E-15 & 2.629E-14 & 2.673E-14 & 2.262E-15 \\
			& & GFQ (non-WB)       & 9.628E-06 & 3.549E-05 & 3.549E-05 & 7.651E-06 \\
			& & Std (non-WB) & 1.356E-05 & 6.716E-05 & 6.716E-05 & 1.507E-05 \\
			\cmidrule(lr){2-7}
			& \multirow{3}{*}{80}
			& GFQ (WB)          & 6.091E-15 & 8.920E-14 & 9.936E-14 & 4.340E-15 \\
			& & GFQ (non-WB)       & 2.341E-06 & 8.963E-06 & 8.963E-06 & 1.923E-06 \\
			& & Std (non-WB) & 3.384E-06 & 1.690E-05 & 1.690E-05 & 3.895E-06 \\
			\midrule
			
			\multirow{6}{*}{$\mathbb{Q}^2$}
			& \multirow{3}{*}{20}
			& GFQ (WB)          & 4.319E-14 & 1.566E-13 & 1.563E-13 & 1.482E-14 \\
			& & GFQ (non-WB)       & 2.138E-08 & 1.990E-08 & 1.990E-08 & 2.649E-08 \\
			& & Std (non-WB) & 5.860E-06 & 2.844E-05 & 2.844E-05 & 4.534E-06 \\
			\cmidrule(lr){2-7}
			& \multirow{3}{*}{40}
			& GFQ (WB)          & 1.172E-13 & 4.896E-13 & 4.970E-13 & 3.387E-14 \\
			& & GFQ (non-WB)       & 1.415E-09 & 1.290E-09 & 1.290E-09 & 1.747E-09 \\
			& & Std (non-WB) & 8.126E-07 & 3.671E-06 & 3.671E-06 & 6.577E-07 \\
			\midrule
			
			\multirow{6}{*}{$\mathbb{Q}^3$}
			& \multirow{3}{*}{10}
			& GFQ (WB)          & 3.396E-13 & 4.427E-13 & 4.501E-13 & 3.665E-14 \\
			& & GFQ (non-WB)       & 4.689E-09 & 6.746E-09 & 6.746E-09 & 5.298E-09 \\
			& & Std (non-WB) & 5.420E-07 & 8.294E-07 & 8.294E-07 & 5.417E-07 \\
			\cmidrule(lr){2-7}
			& \multirow{3}{*}{20}
			& GFQ (WB)          & 1.342E-12 & 1.713E-12 & 1.754E-12 & 9.255E-14 \\
			& & GFQ (non-WB)       & 1.453E-10 & 2.535E-10 & 2.535E-10 & 1.641E-10 \\
			& & Std (non-WB) & 3.407E-08 & 4.724E-08 & 4.724E-08 & 3.428E-08 \\
			\midrule
			
			\multirow{6}{*}{$\mathbb{Q}^4$}
			& \multirow{3}{*}{5}
			& GFQ (WB)          & 7.022E-13 & 7.275E-13 & 6.985E-13 & 7.397E-14 \\
			& & GFQ (non-WB)       & 2.338E-09 & 8.282E-09 & 8.282E-09 & 2.532E-09 \\
			& & Std (non-WB) & 8.685E-08 & 3.863E-07 & 3.863E-07 & 8.024E-08 \\
			\cmidrule(lr){2-7}
			& \multirow{3}{*}{10}
			& GFQ (WB)          & 2.648E-12 & 2.674E-12 & 2.689E-12 & 1.864E-13 \\
			& & GFQ (non-WB)       & 3.901E-11 & 1.472E-10 & 1.472E-10 & 4.110E-11 \\
			& & Std (non-WB) & 2.985E-09 & 1.236E-08 & 1.236E-08 & 2.678E-09 \\
			\bottomrule
		\end{tabular}
	\end{threeparttable}
\end{table}

\begin{figure}[t]
\begin{minipage}[t]{0.88\textwidth}
		\centering
	\begin{subfigure}[b]{0.325\textwidth}
		\centering
		\includegraphics[width=1.035\textwidth]{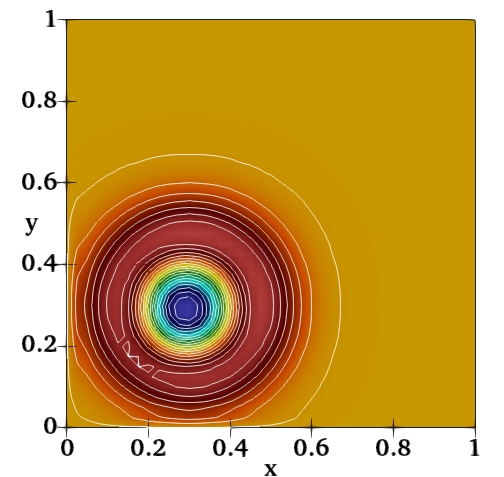}
		\caption{ GFQ (WB) }
	\end{subfigure}
	\begin{subfigure}[b]{0.325\textwidth}
		\includegraphics[width=1.035\textwidth]{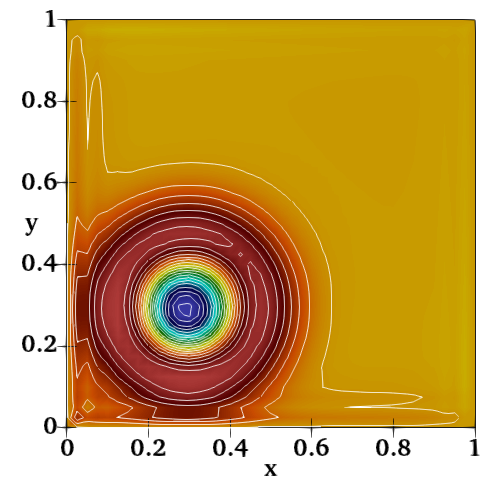}
		\caption{ GFQ (non-WB)}
	\end{subfigure}
	\begin{subfigure}[b]{0.325\textwidth}
		\includegraphics[width=1.035\textwidth]{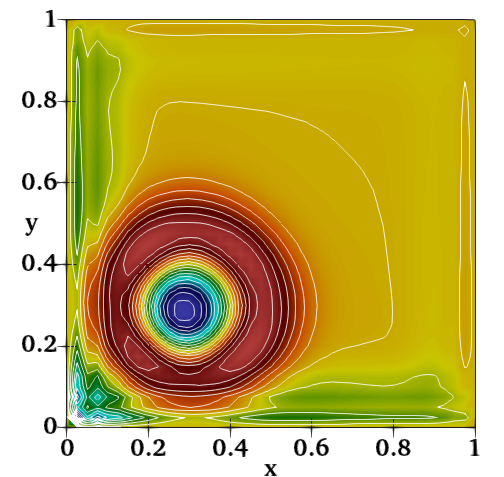}
		\caption{ SUPG-Std (non-WB)}
	\end{subfigure}	

\end{minipage}
\caption{Evolution of a small pressure perturbation on an isothermal equilibrium with $\mathbb Q^1$ elements. Pressure coloured plots  
overlaid with 20 equally spaced contour lines on a $40\times40$ mesh.
SUPG-GFQ (WB) (\textbf{left}), SUPG-GFQ (non-WB) (\textbf{middle}), and SUPG-Std (non-WB) (\textbf{right}) methods.}

\label{fig:perturb_euler}

\hfill
\begin{minipage}[t]{0.11\textwidth}
	\centering
	\vspace{-7. cm}
	\includegraphics[width=\textwidth, trim={18 0 8 0},clip]{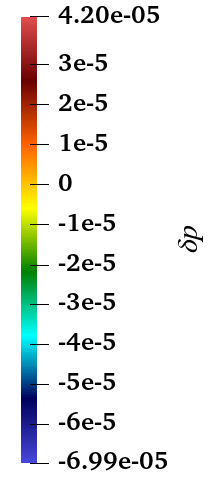}
\end{minipage}
\end{figure}

\begin{figure}[t]
\begin{minipage}[t]{0.88\textwidth}
	\centering
	\begin{subfigure}[b]{0.325\textwidth}
		\centering
		\includegraphics[width=1.02\textwidth]{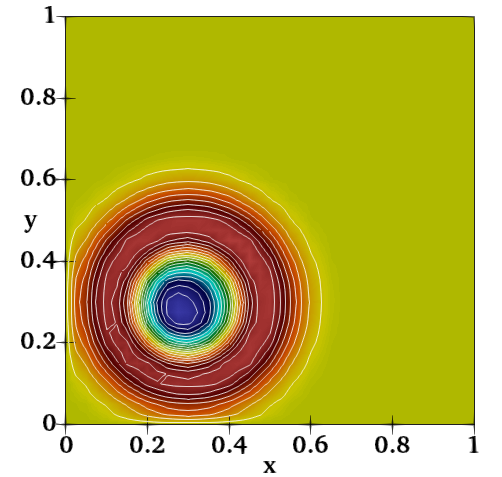}
		\caption{GFQ (WB) }
	\end{subfigure}
	\begin{subfigure}[b]{0.325\textwidth}
		\includegraphics[width=1.02\textwidth]{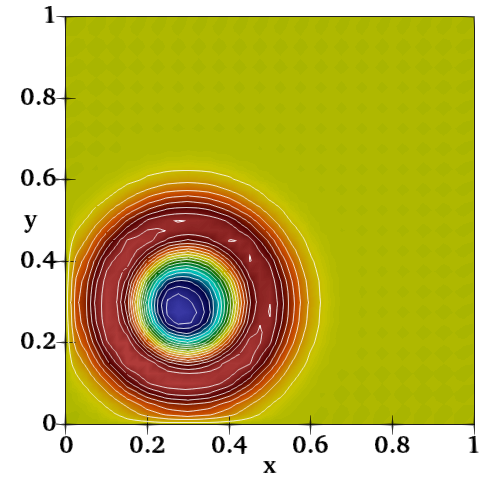}
		\caption{GFQ (non-WB)}
	\end{subfigure}
	\begin{subfigure}[b]{0.325\textwidth}
		\includegraphics[width=1.02\textwidth]{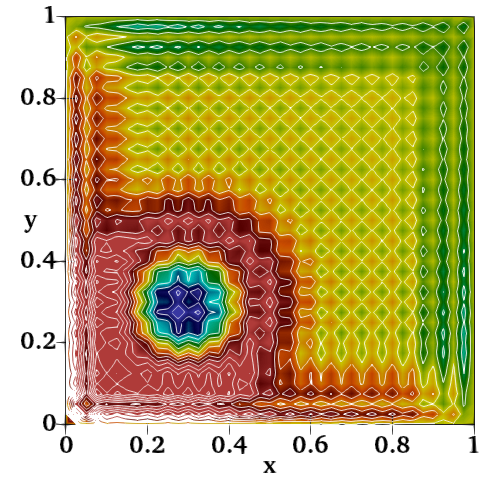}
		\caption{SUPG-Std (non-WB)}
	\end{subfigure}

	\centering
	\begin{subfigure}[b]{0.325\textwidth}
		\centering
		\includegraphics[width=1.02\textwidth]{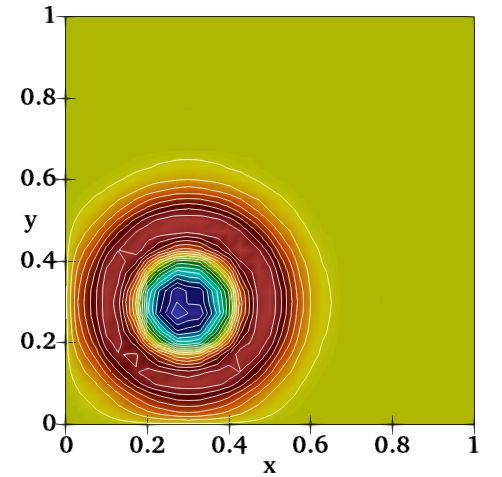}
		\caption{GFQ (WB) }
	\end{subfigure}
	\begin{subfigure}[b]{0.325\textwidth}
		\includegraphics[width=1.02\textwidth]{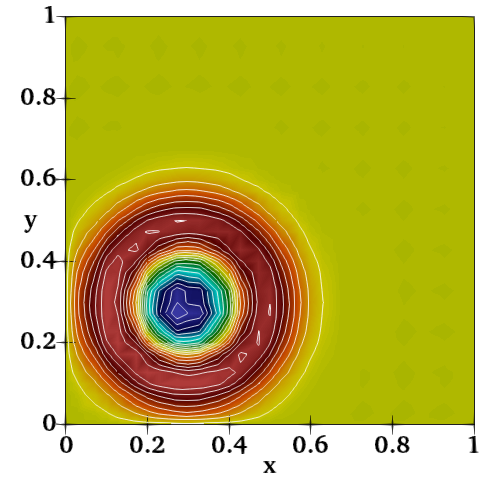}
		\caption{GFQ (non-WB)}
	\end{subfigure}
	\begin{subfigure}[b]{0.325\textwidth}
		\includegraphics[width=1.02\textwidth]{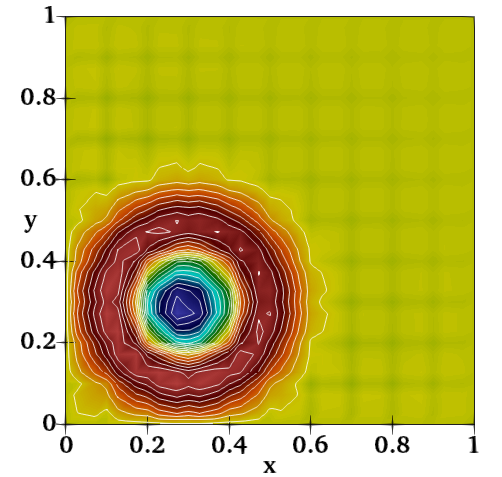}
		\caption{SUPG-Std (non-WB)}
	\end{subfigure}	
	
\end{minipage}

\caption{Evolution of a small pressure perturbation (with amplitude $A=2\times10^{-5}$) about an isothermal equilibrium with  $20\times20$ $\mathbb Q^2$ (\textbf{top}), $10\times10$ $\mathbb Q^3$ (\textbf{center}).
The panels show the pressure-perturbation field (colormap) overlaid with 20 equally spaced contour lines. SUPG-GFQ (WB) (\textbf{left}), SUPG-GFQ (non-WB) (\textbf{middle}), and SUPG-Std (non-WB) (\textbf{right}) methods.}

\label{fig:perturb_eulerP234}

\hfill
\begin{minipage}[t]{0.11\textwidth}
	\centering
	\vspace{-10.5 cm}
	\includegraphics[width=\textwidth, trim={18 0 17 0},clip]{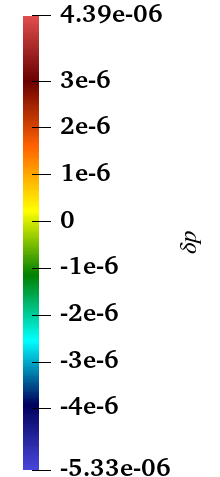}
\end{minipage}
\end{figure}

We now examine the evolution of a small pressure perturbation superposed on the isothermal hydrostatic equilibrium. 
Density and velocity are initialized at equilibrium, while the pressure is perturbed as
\begin{equation}
\label{eq:init-perturb}
p(x,y,0)=\bar p\,\exp\!\Big(-\tfrac{\bar\rho}{\bar p}\,\phi(x,y)\Big)
+ A\,\exp\!\Big(-100\,\tfrac{\bar\rho}{\bar p}\big[(x-0.3)^2+(y-0.3)^2\big]\Big),
\end{equation}
with amplitude $A=2\times10^{-5}$. 
The solution is advanced to $t=0.15\,\mathrm{s}$ under Dirichlet boundary conditions consistent with the equilibrium. 
In Figure~\ref{fig:perturb_euler}, we compare the pressure perturbations, defined by
\[
\delta p(x,y,t)=p(x,y,t)-\bar p\,\exp\!\Big(-\tfrac{\bar\rho}{\bar p}\,\phi(x,y)\Big),
\]
of the SUPG-GFQ (WB), SUPG-GFQ (non-WB), and SUPG-Std (non-WB) methods   on a $40\times40$ mesh and  $\mathbb Q^1$ elements.
The well-balanced SUPG-GFQ method captures perfectly, while the GFQ method shows some small spurious effects close to the lower boundaries,
The latter solution is however  way more accurate than the one obtained with the standard SUPG which shows unphysical waves of significant amplitude 
in a large part of the domain.

In Figure~\ref{fig:perturb_eulerP234}, we show the same comparison for $\mathbb Q^2$ and $\mathbb Q^3$  
elements on varying grid resolutions to have more or less the same number of total degrees of freedom. 
As the order increases, all methods increase their accuracy, but the SUPG-GFQ method still   outperforms the 
standard one.


.

		\subsection{Gravity driven Rayleigh–Taylor instability}
		\begin{figure}[t]
			
			\begin{minipage}{0.87\textwidth}
				\centering
				
				\includegraphics[width=0.325\textwidth]{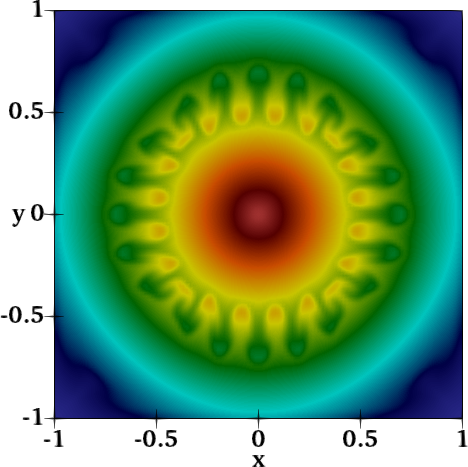}%
				\includegraphics[width=0.325\textwidth]{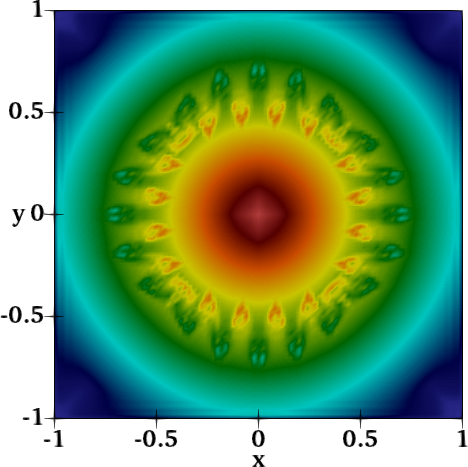}%
				\includegraphics[width=0.325\textwidth]{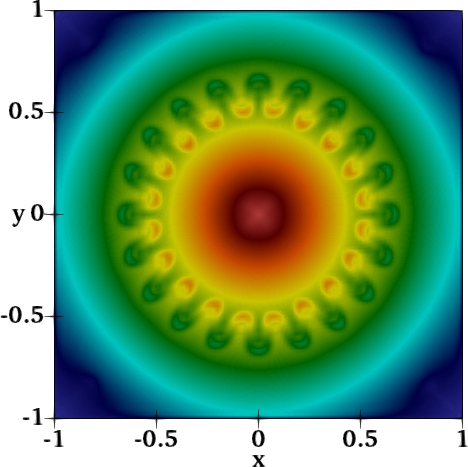}

			\end{minipage}
			\hfill
			\begin{minipage}{0.12\textwidth}
				\centering
				\hspace{-0.3cm}\includegraphics[width=\textwidth]{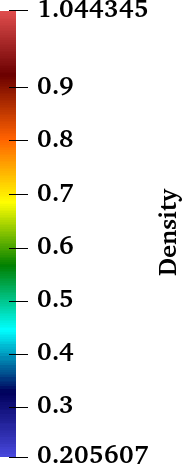}
			\end{minipage}

			\caption{Rayleigh-Taylor instability. Density field at 
				$t=4.1\,\mathrm{s}$ computed with $\mathbb{Q}^1$ elements on $120\times 120$ meshes. 
				\textbf{Left:} second-order HLLC; 
				\textbf{middle:} standard SUPG; 
				\textbf{right:} SUPG-GFQ.
			}
			
			\label{fig:all_RT_SUPG_GFQ}
			
		\end{figure}
		
		\begin{figure}[t]
			\begin{minipage}{0.86\textwidth}
				\begin{minipage}{0.495\textwidth}
					\centering  $120\times 120$\\
					\begin{minipage}{0.49\textwidth}
						\centering SUPG-Std			
					\end{minipage}\begin{minipage}{0.49\textwidth}
						\centering  SUPG-GFQ			
					\end{minipage}\\
					\half{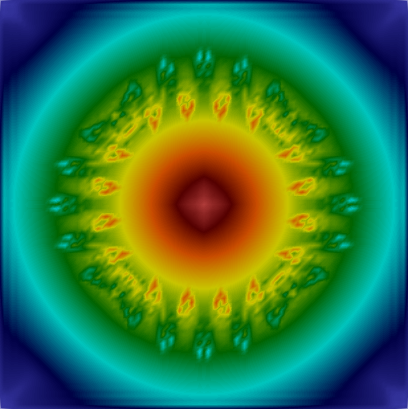}{l}{0.49}\,\half{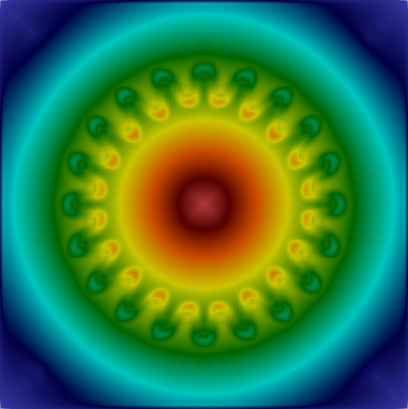}{r}{0.49}	\\	
				\end{minipage}
				\hfill
				\begin{minipage}{0.495\textwidth}
					\centering  $240\times 240$\\
					\begin{minipage}{0.49\textwidth}
						\centering SUPG-Std			
					\end{minipage}\begin{minipage}{0.49\textwidth}
						\centering  SUPG-GFQ			
					\end{minipage}\\
					\half{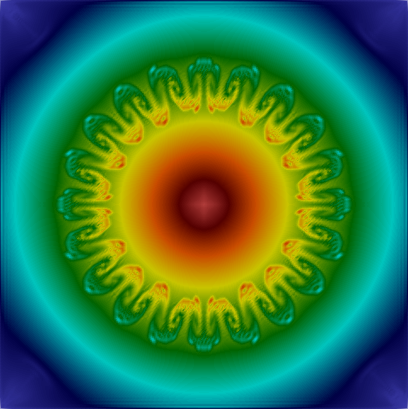}{l}{0.49}\,\half{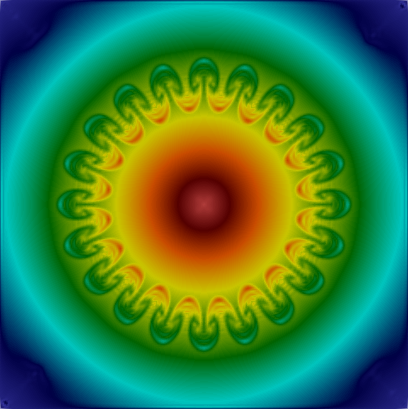}{r}{0.49}\\	
				\end{minipage}		
			\end{minipage}
			\begin{minipage}{0.13\textwidth}
				\vspace{0.65cm}
				\includegraphics[width=\textwidth]{./Color_bar_SUPG_cropped-RT.png}
			\end{minipage}

			\caption{%
				Rayleigh-Taylor instability with gravity directed radially inward. 
				Time \(4.1\,\mathrm{s}\)) for mesh $120 \times 120$ (\textbf{left})  and $240 \times 240$ (\textbf{right}).  In each panel, the left half corresponds to the SUPG-Std method, while the right half corresponds to the SUPG-GFQ method.
			}
			\label{fig:RT_SUPG_GFQ1}
		\end{figure}

		We now consider a gravity-driven Rayleigh--Taylor instability under the radial gravitational potential $\phi(r)=r$. In the unperturbed state, the system admits the isothermal equilibrium
		\begin{equation}
			p(r) = \rho(r) = e^{-r}.
		\end{equation}
		A perturbation is introduced at a reference radius $r_0$ by prescribing the following initial conditions:
		\begin{equation}
			p(r,\theta) = 
			\begin{cases}
				e^{-r}, & r \leq r_0, \\[0.3em]
				e^{-\tfrac{r}{\alpha} + r_0 \tfrac{1-\alpha}{\alpha}}, & r > r_0,
			\end{cases}
			\qquad
			\rho(r,\theta) =
			\begin{cases}
				e^{-r}, & r \leq r_I(\theta), \\[0.3em]
				\dfrac{1}{\alpha}\, e^{-\tfrac{r}{\alpha} + r_0 \tfrac{1-\alpha}{\alpha}}, & r > r_I(\theta),
			\end{cases}
		\end{equation}
		where the perturbed interface is given by
		\begin{equation}
			r_I(\theta) = r_0 \left(1 + \eta \cos(k\theta)\right),
			\qquad
			\alpha = \frac{e^{-r_0}}{e^{-r_0} + \Delta \rho}.
		\end{equation}
		
		With this construction, the pressure is continuous while the density exhibits a jump of amplitude 
		$\Delta \rho$ at $r = r_I(\theta)$. Following \cite{leveque1999wave}, the parameters are chosen as:
		\begin{equation}
			r_0 = 0.5, 
			\qquad \Delta \rho = 0.1, 
			\qquad k = 20, 
			\qquad \eta = 0.02.
		\end{equation}
		The computational domain is $[-1,1]\times[-1,1]$.  This configuration gives rise to the Rayleigh-Taylor instability along the perturbed interface, where plume-like 
		structures develop over time. Away from the interface, the well-balanced scheme preserves the equilibrium state and prevents spurious oscillations near the boundaries.
		
		In Figure~\ref{fig:all_RT_SUPG_GFQ}, we compare the density fields obtained with the second-order HLLC-FV, the $\mathbb{Q}^1$ SUPG-Std, and the $\mathbb{Q}^1$ SUPG-GFQ methods at $t=4.1\,\mathrm{s}$ on a 120x120 mesh. It is clear that the SUPG-GFQ is the only method that keeps the radial symmetry already for  coarse meshes. 
		A one to one comparison with the  SUPG-Std method is reported on figure \ref{fig:RT_SUPG_GFQ1}. We can clearly see that the 
		classical method  suffers from mesh imprinting which is   absent in the 
		SUPG-GFQ  solutions. The latter is able to capture the flow structures without spurious oscillations and with almost no mesh imprinting. 
		The HLLC-FV method,  is the one showing the most numerical diffusion, even when refining the mesh. 

		\subsection{Low Mach Behavior: Rising thermal bubble}
		We consider a standard benchmark in atmospheric flows: the evolution of a warm, localized potential temperature anomaly embedded in a neutrally stratified, hydrostatic environment \cite{robert1993bubble,giraldo2008study}.
		
		Since the perturbed parcel is warmer (and therefore lighter) than its surroundings, buoyancy accelerates it upward; shear then deforms the bubble, eventually producing a characteristic mushroom-shaped structure.
		
		The quiescent reference state is hydrostatic and uniform in potential temperature:
		\begin{equation}
			\theta_0 = 300~\mathrm{K}, 
			\qquad p_0 = 10^{5}~\mathrm{Pa}, 
			\qquad \rho_0 = 1.1612055~\mathrm{kg\,m^{-3}}.
		\end{equation}
		With the gravitational acceleration constant $g=9.8~\mathrm{m\,s^{-2}}$ acting in the negative $y$–direction and $c_p=\frac{\gamma}{\gamma-1}R$, the Exner pressure associated with the base state is
		\begin{equation}
			\Pi(y) \;=\; 1 - \frac{g\,y}{c_p\,T_0},
		\end{equation}
		and the hydrostatic density follows from the ideal gas law in terms of the potential temperature:
		\begin{equation}
			\rho(x,y) \;=\; \frac{p_0}{R\,\theta(x,y)}\,\Pi(y)^{\tfrac{1}{\gamma-1}} .
		\end{equation}
		
		The warm bubble is introduced as a cosine perturbation in potential temperature,
		\begin{equation}
			\Delta\theta(x,y) \;=\;
			\begin{cases}
				\displaystyle \frac{\theta_c}{2}\left(1+\cos\!\Big(\frac{\pi r}{r_c}\Big)\right), & r \le r_c,\\[0.7em]
				0, & r>r_c,
			\end{cases}
			\qquad 
			r=\sqrt{(x-x_c)^2+(y-y_c)^2},
		\end{equation}
		with center $(x_c,y_c)=(500,350)$, radius $r_c=250\,\mathrm{m}$, and amplitude $\theta_c = \SI{0.5}{\celsius}$. 
		The full initial potential temperature is $\theta(x,y)=\theta_0+\Delta\theta(x,y)$, and the velocity field is initially at rest:
		\begin{equation}
			u(x,y)=v(x,y)=0,
		\end{equation}
		while the total energy is consistent with the thermodynamic state,
		\begin{equation}
			\rho E(x,y)=\rho(x,y)\,\frac{R}{\gamma-1}\,\theta(x,y)\,\Pi(y).
		\end{equation}
		
		The computational domain is $(x,y)\in[0,1000]\times[0,1000]\,\mathrm{m}$ with slip (no-flux) boundary conditions on all sides, and the simulation is run up to $t_{\mathrm{end}}=700~\mathrm{s}$. 
		This test is widely used to assess the ability of numerical schemes to preserve the hydrostatic equilibrium away from the bubble and to accurately capture the buoyancy-driven ascent and roll-up of the thermal without generating spurious imbalances \cite{marras2015stabilized,giraldo2008study}.
		
		In figure~\ref{fig:bubble}, we show the time evolution of the potential temperature field computed with the SUPG-GFQ method on a $150\times 150$ mesh, as well as a comparison between the SUPG-Std and SUPG-GFQ methods at $t=700\,\mathrm{s}$ for $60\times 60$ and $120\times 120$ meshes. In the time evolution, we see how the bubble rises and rolls up, forming the characteristic mushroom shape, as expected. 
		The comparison between the two methods at $t=700\,\mathrm{s}$ shows that the SUPG-Std method produces a less regular shape of the mushroom, even for the fine meshgrid, where the shape is characterized by weird wiggles.  
		This highlights the improved accuracy of the SUPG-GFQ method in low Mach number flows with buoyancy effects.
		
		\begin{figure}[t]
			\begin{minipage}{\textwidth}
				\centering
				$150\times 150$\\
				\begin{minipage}{0.24\textwidth}
					\centering
					$t=0\,\mathrm{s}$
				\end{minipage}
				\begin{minipage}{0.24\textwidth}
					\centering
					$t=233.3\,\mathrm{s}$
				\end{minipage}
				\begin{minipage}{0.24\textwidth}
					\centering
					$t=466.7\,\mathrm{s}$
				\end{minipage}
				\begin{minipage}{0.24\textwidth}
					\centering
					$t=700\,\mathrm{s}$
				\end{minipage}\\
				\includegraphics[width=0.24\textwidth]{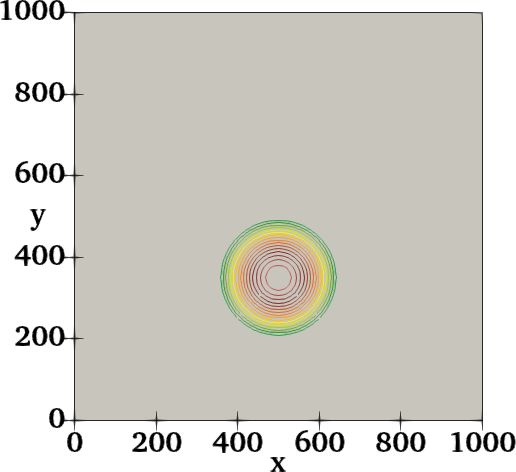}
				\includegraphics[width=0.24\textwidth]{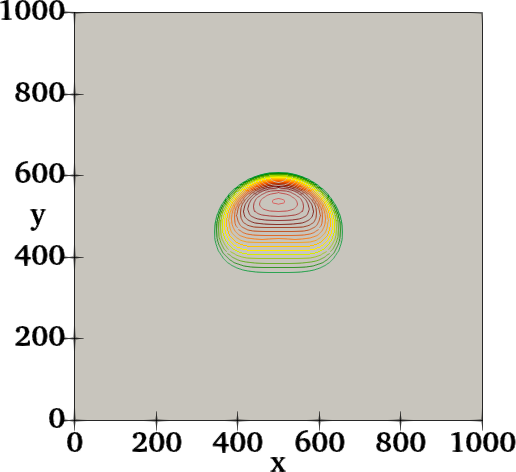}
				\includegraphics[width=0.24\textwidth]{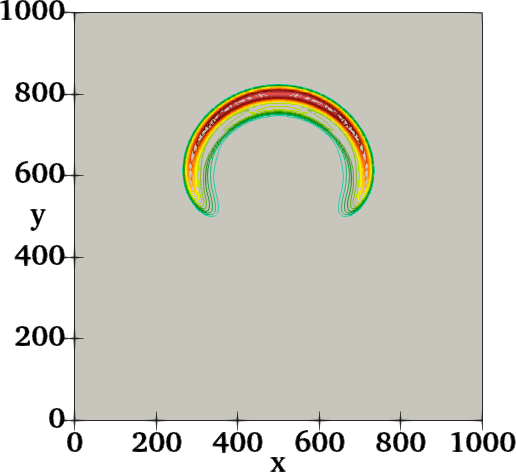}
				\includegraphics[width=0.24\textwidth]{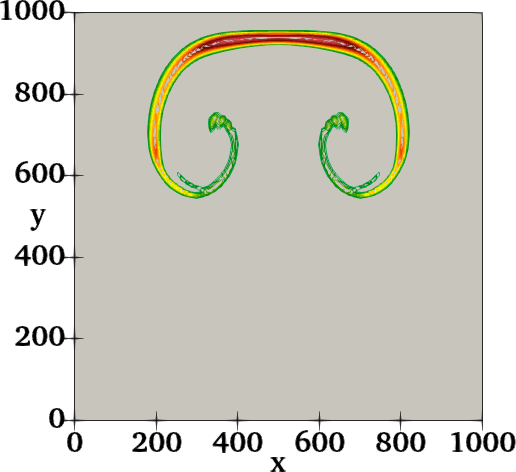}
			\end{minipage}\\[2mm]
			\begin{minipage}{0.87\textwidth}
				\centering
				\begin{minipage}{0.49\textwidth}
					\centering
					$60\times 60$\\
					\begin{minipage}{0.5181818\textwidth}
						\centering SUPG-Std
					\end{minipage}
					\begin{minipage}{0.4618181\textwidth}
						\centering SUPG-GFQ
					\end{minipage}\\
					\includegraphics[width=0.5181818\textwidth,trim={0 20 254 0},clip]{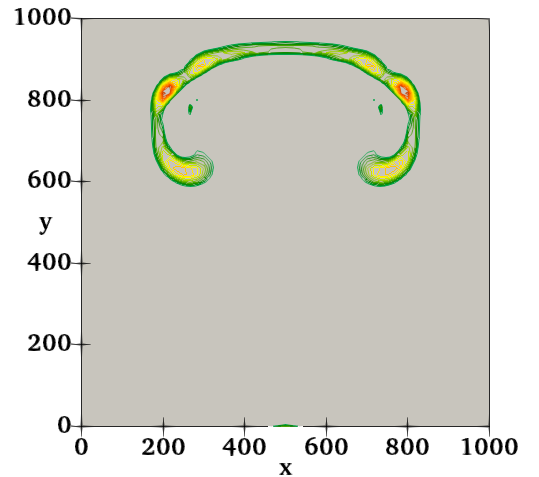}
					\includegraphics[width=0.4618181\textwidth,trim={285 20 0 0},clip]{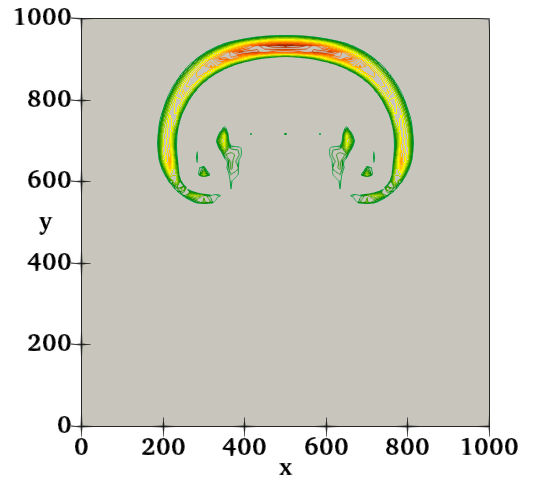}
				\end{minipage}
				\begin{minipage}{0.49\textwidth}
					\centering
					$120\times 120$\\
					\begin{minipage}{0.5181818\textwidth}
						\centering SUPG-Std
					\end{minipage}
					\begin{minipage}{0.4618181\textwidth}
						\centering SUPG-GFQ
					\end{minipage}\\
					\includegraphics[width=0.5181818\textwidth,trim={0 20 254 0},clip]{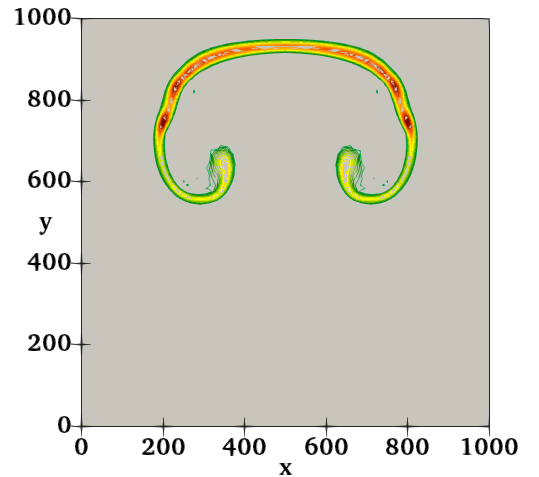}
					\includegraphics[width=0.4618181\textwidth,trim={285 20 0 0},clip]{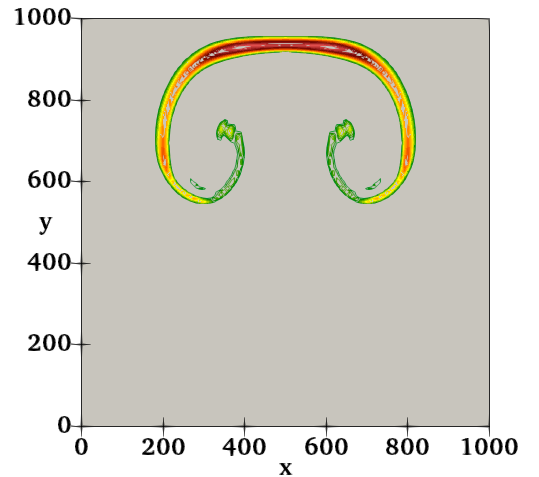}
				\end{minipage}
			\end{minipage}
			\hfill
			\begin{minipage}{0.12\textwidth}
				\centering
				\vspace{0.55cm}
				\includegraphics[width=\textwidth]{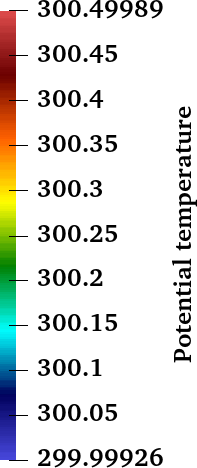}
			\end{minipage}
			\caption{
				Rising thermal bubble, potential temperature. \textbf{Top} line, time evolution for SUPG-GFQ on a $150\times 150$ mesh for times $t\in \lbrace 0, 700/3, 2\cdot 700/3, 700 \rbrace$.
				\textbf{Bottom} line comparison at $t = 700\,\text{s}$ 
				using $\mathbb{Q}^1$ elements on $60\times 60$ (\textbf{left}) and $120\times 120$ (\textbf{right}) meshes of SUPG-Std (\textbf{left columns}) and SUPG-GFQ (\textbf{right columns}).
			}\label{fig:bubble}
		\end{figure}

		\subsection{Shock-vortex interaction}

To show the applicability of our method to flows with higher Mach numbers, we now consider   a  
 classical benchmark  involving the interaction between a localized vortex and a stationary normal shock.
 We consider in particular  the composite-vortex setting considered in \cite{rault2003shock}.
The computational domain is $[0,2]\times[0,1]$. A stationary shock is located at $x_s=0.5$, with upstream state
\[
\rho_u=1,\qquad u_u=M_s\sqrt{\gamma},\qquad v_u=0,\qquad p_u=1.
\]
Since we do not add any oscillation control method to the scheme, we limit ourself to the case of a mild shock obtained for  $M_s=1.1$. The downstream state is determined from the Rankine-Hugoniot relations associated with the stationary shock. A counterclockwise vortex is centered at $(x_c,y_c)=(0.25,0.5)$ and is characterized by the inner and outer radii $a=0.075$ and $b=0.175$, respectively. The angular velocity is given by $v_\theta = v_m \frac{r}{a}$ for $r\leq a$, $v_\theta = v_m \frac{a}{a^2-b^2}\frac{r^2-b^2}{r}$ for $a<r\leq b$, and $v_\theta=0$ for $r>b$, where $v_m$ is the maximum angular velocity.
Its strength is measured by the parameter $M_v=v_m/\sqrt{\gamma}$, and we set $M_v=0.9$.  The corresponding temperature field is obtained from radial equilibrium, and the density and pressure inside the vortex are then recovered from the isentropic relations, consistently with the construction proposed in \cite{rault2003shock}. At the boundary level, the upstream state is imposed at the left boundary, the downstream state at the right boundary, and slip-wall boundary conditions are applied at the top and bottom boundaries.

Figure~\ref{fig:shock_vortex} shows the numerical solution at time $t=0.4\,\mathrm{s}$ for $\mathbb{Q}^1$, $\mathbb{Q}^2$, $\mathbb{Q}^3$, and $\mathbb{Q}^4$ approximations for around $200\times 100$ degrees of freedom. All polynomial degrees reproduce the main qualitative features of the shock--vortex interaction. The visual differences between the various approximations remain relatively modest, indicating that the method already captures the overall flow structure well at low order.  Some oscillations are of course present, since we do not have any artificial dissipation to control them, but the simulations remain perfectly stable.
Moreover, higher-order approximations   provide a slightly smoother description of some localized features and further reduce  mesh imprinting.

\begin{figure}[t]
	\begin{minipage}[t]{0.87\textwidth}
		\centering
		
		\begin{subfigure}[b]{0.49\textwidth}
		\centering $\mathbb{Q}^1$, $200\times100$\\[-0.2cm]
			\includegraphics[width=0.87\textwidth]{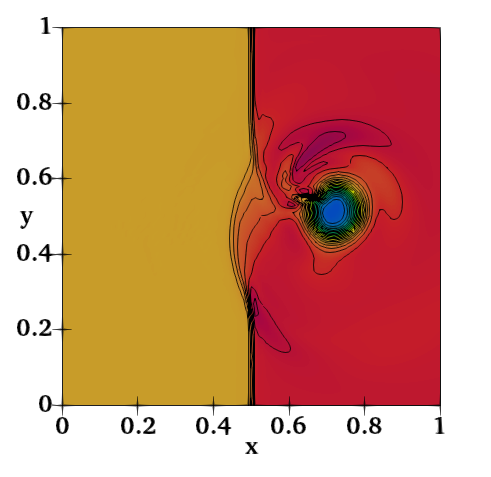}
				\vspace{0.4cm}
		\end{subfigure}
		\begin{subfigure}[b]{0.49\textwidth}
			\centering $\mathbb{Q}^2$, $100\times50$\\[-0.2cm]
			\includegraphics[width=0.87\textwidth]{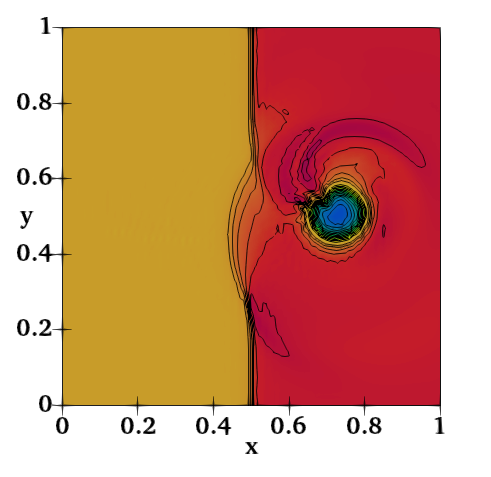}
				\vspace{0.4cm}
		\end{subfigure}
		
		\vspace{-0.3cm}
		
		\begin{subfigure}[b]{0.49\textwidth}
			\centering $\mathbb{Q}^3$, $66\times33$\\[-0.2cm]
			\includegraphics[width=0.87\textwidth]{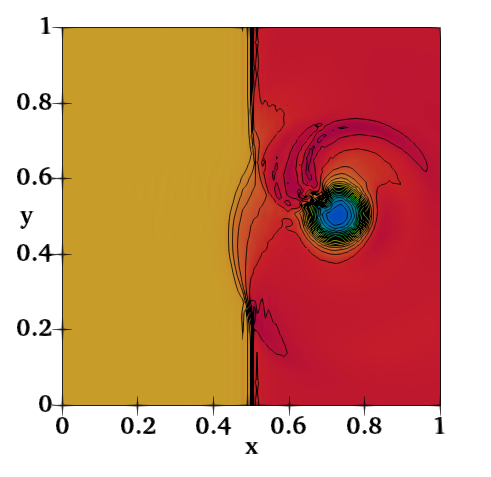}
		\end{subfigure}
		\begin{subfigure}[b]{0.49\textwidth}
			\centering $\mathbb{Q}^4$, $50\times25$\\[-0.2cm]
			\includegraphics[width=0.87\textwidth]{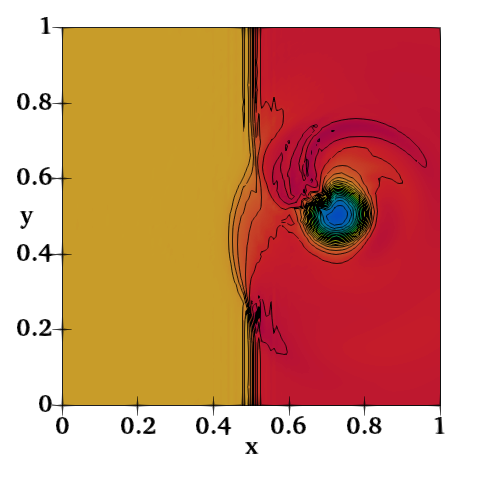}
		\end{subfigure}
		
		
	\end{minipage}
	\hfill
	\begin{minipage}[t]{0.123\textwidth}
		\vspace{-3.1cm}
		\hspace{-0.7cm}\includegraphics[width=1.05\textwidth, trim={10 0 7 0},clip]{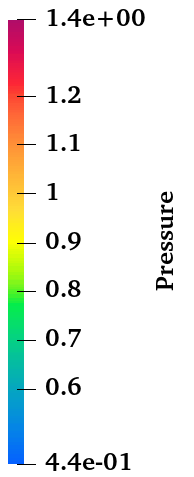}
	\end{minipage}
	
	\caption{Shock-vortex interaction test at time $t=0.4\,\mathrm{s}$. Density field (colormap) overlaid with contour lines, obtained with the SUPG-GFQ method using $\mathbb{Q}^1$ elements ({top left}), and $\mathbb{Q}^2$, $\mathbb{Q}^3$, and $\mathbb{Q}^4$ elements ({top right}, {bottom left}, and {bottom right}), respectively. $200\times 100$ degrees of freedom in all cases. }
	\label{fig:shock_vortex}
\end{figure}

	\section{Conclusions}\label{sec:conclusion}
	
We have presented a framework to obtain very high order stationarity-preserving finite element discretizations for balance laws.
The paper extends the ideas of \cite{barsukow2025structure,barsukow2025stationaritypreservingnodalfinite,barsukow2025genuinely} in several ways. 
First, we provide a  precise definition of  stationarity preservation 
and some general guidelines to check whether a scheme fits into this category or not. We have provided
a more detailed discussion on how  the underlying  method is obtained and what its properties are.
In particular, we have clarified that the global flux quadrature can be seen as a pre-processing
of the fluxes, mapping the continuous space of  $\mathbb{Q}_K$ vectors onto a local space, which has much in common with the usual Raviart-Thomas $\mathbb{RT}_{K-1}$ space.
Several implications of this projection are discussed. 

We have then applied this procedure to nonlinear systems of balance laws, extending the stationarity preserving SUPG method proposed for linear systems in \cite{barsukow2025structure,barsukow2025stationaritypreservingnodalfinite}. 
We have discussed and demonstrated many properties 
of the scheme, starting from local conservation, to consistency, to the relations with low Mach compliant methods.
Finally, we have applied the scheme to the Euler equations with gravitational potential, for which we also propose a simple modification
of the source terms to preserve hydrostatic  isothermal equilibria at machine precision.
The numerical results leave no doubt on the  enhancements obtained with the approach proposed,
and on the superiority of the stationarity preserving SUPG method compared to the standard one.

Ongoing works involve the application to other systems and to three-dimensional flows, the
combination with well balancing techniques as those proposed  e.g. in \cite{GOMEZBUENO2021125820}, and extensions to complex geometries. The extension to genuinely unstructured meshes
is an exciting challenge left for forthcoming papers.

\section*{Acknowledgements} 

Work started when MZ was postdoctoral associate in the team CARDAMOM at the  Inria research center at University of Bordeaux. 
MR is member of the team CARDAMOM at the  Inria research center at University of Bordeaux. 
DT is a member of GNCS group of INdAM.
The authors would like to acknowledge many insightful discussions with W. Barsukow, M. Ciallella, and V. Perrier.

\appendix
\section{Consistency estimate proof}\label{app:consistency}
In this appendix, we provide a proof of proposition \ref{prop:consistency_equilibria}. The proof is obtained  following developments similar to those of section~\ref{sec:local_conservation},
and classical for residual distribution methods \cite{AR:17,amr2025}. First, we note that for a smooth stationary solution $\mathbf{W}^e$, we can  define continuous counterparts of the 
potentials \eqref{eq:GF1} and \eqref{eq:GF10} and  trivially
$$
\partial_{x^1x^2}(\mathcal{F}_{1}^e  +  \mathcal{F}_{2}^e +  \mathcal{S}^e ) = \nabla\cdot\mathbf{F}^e-\mathbf{S}^e =0.
$$
In particular, by analogy with \eqref{eq:Phi_gf} we define the array of integrated divergence minus source :  
$$
[\Phi^e]_{pq} + \mathcal{S}^e_{pq}  = \int_{x^2_{j,0} }^{x^2_{j,q} } (\mathcal{F}_{1}^e-\mathcal{F}_{1}^e|_{x^1=x^1_{i,0}})\mathrm{d}x^2
+ \int_{x^1_{i,0} }^{x^1_{i,p} } (\mathcal{F}_{2}^e-\mathcal{F}_{2}^e|_{x^2=x^2_{j,0}})\mathrm{d}x^1
-\int_{x^2_{j,0} }^{x^2_{j,q} }\int_{x^1_{i,0} }^{x^1_{i,p} }\mathbf{S}^e\,d\mathbf{x} = 0.
$$
Using now  the expression \eqref{eq:fluct_su_gf} of the steady fluctuations, we  manipulate the truncation error \eqref{eq:TE0} as follows:
$$
\begin{aligned}
\mathsf{E}_h =& \int_{\Omega}\partial_{x^1x^2}\psi_{h}  \;( \mathcal{F}_{1,h}  +  \mathcal{F}_{2,h} +  \mathcal{S}_{h}   )\,d\mathbf{x} 
+ \sum_\alpha\sum_{E_{ij}\in\Omega}  (\psi_{\alpha} -\bar \psi_{E_{ij}}) \int_{E_{ij}} \nabla\varphi_\alpha\cdot \mathbf{A}\,\tau\, 
\partial_{x^1x^2}\Phi_h \,d\mathbf{x} \\
=&  \int_{\Omega}\partial_{x^1x^2}\psi_{h}  \;( \mathcal{F}_{1,h}  -\mathcal{F}_{1}^e +  \mathcal{F}_{2,h} -\mathcal{F}_{2}^e+  \mathcal{S}_{h} - \mathcal{S}^e   )\,d\mathbf{x}  \\
+& \sum_\alpha\sum_{E_{ij}\in\Omega}  (\psi_{\alpha} -\bar \psi_{E_{ij}}) \int_{E_{ij}} \nabla\varphi_\alpha\cdot \mathbf{A}\,\tau\, 
\partial_{x^1x^2}(\Phi_h - \Phi^e) \,d\mathbf{x} 
\end{aligned}
$$ 
having used the compactness of the support of $\psi$  in the first term.
We now  recall that as stated by  Proposition~\ref{prop:Lobatto_integration},  both the $\mathcal{F}_{1,h}$ and $\mathcal{F}_{2,h}$ potentials are
the result of integrating  \eqref{eq:bigF7} with the  LobattoIIIA collocation method \cite{Hairer1993}.
For the source term, the same arguments used in proposition \ref{prop:Lobatto_integration} allow to state that $\mathcal{S}(\mathbf{x^*})$ can be seen as the repeated application of the LobattoIIIA method 
in the $x^1$  and the $x^2$ direction:
$$
\dfrac{d}{dx^2}\partial_{x^1}\mathcal{S} = \mathbf{S} \;\; \overset{\text{LobattoIIIA}}{\longrightarrow}    \;\;  \dfrac{d}{dx^1} \mathcal{S}  =  \partial_{x^1}\mathcal{S}    \;\; \overset{\text{LobattoIIIA}}{\longrightarrow}    \;\;  \mathcal{S}.
$$
We then recall that the consistency of the LobattoIIIA method is   of order $h^{K+2}$ for internal stages, and order $h^{2K}$ for the endpoints of each integration interval. 
Thus,  for any point $\mathbf{x}^\star = (x^{1,\star},x^{2,\star})$ in the Lobatto nodal grid,
 we can write 
$$
\begin{aligned}
\mathcal{F}_{1,h}(t^n,x^1_{\pi}, x^{2,\star}) -\mathcal{F}_{1}^e(t^n,x^{1}_{\pi}, x^{2,\star}) = &\; \mathcal{O}(h^{K+2}),\quad\text{for any }x^{1}_{\pi},\\
\mathcal{F}_{2,h}(t^n,x^{1,\star},x^2_{\kappa}) -\mathcal{F}_{2}^e(t^n, x^{1,\star}, x^{2}_{\kappa}) =  &\; \mathcal{O}(h^{K+2}),\quad\text{for any }x^{2}_{\kappa},\\
\mathcal{S}_{h}(t^n,x^{1,\star},x^{2,\star}) -\mathcal{S}^e(t^n,x^{1,\star}  , x^{2,\star})  =  &\; \mathcal{O}(h^{K+2}).
\end{aligned}
$$
With this result we can immediately estimate the first term, simply by  noting that for $\psi$ a $C^1$ function of each of its arguments, then  $\partial_{x^1x^2}\psi_h$ is bounded, so 
$$
\left\lvert \int_{\Omega}\partial_{x^1x^2}\psi_{h}  \;( \mathcal{F}_{1,h}  -\mathcal{F}_{1}^e +  \mathcal{F}_{2,h} -\mathcal{F}_{2}^e+  \mathcal{S}_{h} - \mathcal{S}^e   )\,d\mathbf{x} \right\rvert \le C_1 h^{K+2}.
$$
For the second term, first we note  that due to the double integral  $ \Phi_h -\Phi^e$   contains terms of the type
$$
\int_{x^2_{j,0} }^{x^2_{j,q} } (\mathcal{F}_{1,h}^e-\mathcal{F}_{1}^e)\mathrm{d}x^2
$$
so we can estimate it as $| \Phi_h -\Phi^e|=\mathcal{O}(h^{K+3})$.  Proceeding classically \cite{AR:17,amr2025} we use the fact that there are $h^{-2}$ elements in the domain,
the smoothness of $\psi$ and the definition of $\tau$, proportional to $h$, to conclude also that 
$$
\left\lvert  \sum_\alpha\sum_{E_{ij}\in\Omega}  (\psi_{\alpha} -\bar \psi_{E_{ij}}) \int_{E_{ij}} \nabla\varphi_\alpha\cdot \mathbf{A}\,\tau\, 
\partial_{x^1x^2}(\Phi_h - \Phi^e) \,\mathrm{d}\mathbf{x},
\right\rvert \le C_2 h^{K+2}
$$
which achieves the proof.

\bibliographystyle{siam}
\bibliography{Ref1}
	
\end{document}